\documentclass[reqno,11pt]{article}
\usepackage{mathrsfs}
\usepackage{amssymb}
\usepackage{amsthm}
\usepackage{amsmath}
\usepackage{amsfonts}
\usepackage{mathtools}
\usepackage{enumerate}
\usepackage{bm}

\usepackage{graphicx}

\usepackage{bbm}
\usepackage{mathrsfs}
\usepackage{amssymb}
\usepackage[thicklines]{cancel}

\UseRawInputEncoding

\usepackage[usenames,dvipsnames]{xcolor}
\usepackage{soul}

\usepackage{txfonts}

\usepackage{psfrag}
\usepackage[numbers,sort&compress]{natbib}
\usepackage{setspace,color,wrapfig}
\usepackage[colorlinks,linkcolor=blue,citecolor=cyan]{hyperref}

\numberwithin{equation}{section}
\newtheorem{theorem}{Theorem}[section]

\newtheorem{corollary}[theorem]{Corollary}
\newtheorem{proposition}[theorem]{Proposition}

\theoremstyle{definition}

\newtheorem{remark}[theorem]{Remark}
\theoremstyle{remark}

\begin{document}
\title{ Subsonic time-periodic solutions to radially symmetric spiral  flows   in an annulus}
\author{Huimin Yu\thanks{School of Mathematics, Shandong Normal University, Jinan, Shandong Province, China, 250014. Email:hmyu@sdnu.edu.cn}\and
Zihao Zhang\thanks{School of Mathematics, Jilin University, Changchun, Jilin Province, China, 130012. Email: zhangzihao@jlu.edu.cn}}
\date{}
\maketitle
\def\be{\begin{eqnarray}}
\def\ee{\end{eqnarray}}
\def\no{\nonumber}
\newcommand{\de}{{\mathrm{d}}}
\def\div{{\rm div\,}}
\def\curl{{\rm curl\,}}
\def\om{\omega}
\def\th{\theta}
\def\la{\lambda}
\newcommand{\ro}{{\rm rot}}
\newcommand{\sr}{{\rm supp}}
\newcommand{\sa}{{\rm sup}}
\newcommand{\va}{{\varphi}}
\newcommand{\md}{\mathcal{D}}
\newcommand{\me}{\mathcal{M}}
\newcommand{\ml}{\mathcal{V}}
\newcommand{\mg}{\mathcal{G}}
\newcommand{\mh}{\mathcal{H}}
\newcommand{\mf}{\mathcal{F}}
\newcommand{\ms}{\mathcal{S}}
\newcommand{\mt}{\mathcal{T}}
\newcommand{\mn}{\mathcal{N}}
\newcommand{\mb}{\mathcal{P}}
\newcommand{\mm}{\mathcal{B}}
\newcommand{\mj}{\mathcal{J}}
\newcommand{\mk}{\mathcal{K}}
\newcommand{\my}{\mathcal{U}}
\newcommand{\mw}{\mathcal{W}}
\newcommand{\mq}{\mathcal{Q}}
\newcommand{\ma}{\mathcal{A}}
\newcommand{\mc}{\mathcal{C}}
\newcommand{\mi}{\mathcal{I}}
\newcommand{\n}{\nabla}
\newcommand{\e}{\tilde}
\newcommand{\m}{\Omega}
\newcommand{\h}{\hat}
\newcommand{\x}{\bar}
%\begin{abstract}
 \newcommand{\q}{{\rm R}}
%\end{abstract}
%fariyaminarsh663@gmail.com f4wg84wnf73b
\newcommand{\p}{{\partial}}
\newcommand{\z}{{\varepsilon}}
\renewcommand\figurename{\scriptsize Fig}
\pagestyle{myheadings} \markboth{\hfill Existence and stability of subsonic time-periodic solution  \hfill}{\hfill Existence and stability of subsonic  time-periodic solution\hfill}\maketitle
\begin{abstract}
         We investigate subsonic  time-periodic solutions to the radially symmetric  non-isentropic Euler system  with nonzero angular velocity in an annulus. Under the assumption that the boundary conditions are dissipative on the inner circle and non-dissipative on the outer circle,  we   establish the existence and stability of time-periodic solutions which are close to steady radially  symmetric subsonic spiral   flows.
  It is worth noting that there is no  smallness  restriction on the background subsonic  solutions. The main difficulties arise from the derivative loss caused by the coupling with the radial entropy derivative and the additional zeroth-order terms induced by the nonzero angular velocity in the diagonalized system. One of the key ingredients of
the analysis is to rewrite the radial derivative of the entropy as a derivative along the first or fourth characteristic direction. Another one is to control the accumulation of perturbations along characteristic curves   by suitably restricting the width of the annulus.
\end{abstract}
\begin{center}
\begin{minipage}{5.5in}
Mathematics Subject Classifications 2020: 35A01, 35B10, 35L50, 76N15.\\
Key words: non-isentropic Euler system, nonzero angular velocity, time-periodic solutions, subsonic flows.
\end{minipage}
\end{center}

\section{Introduction and  main results}\noindent
\par The motion of  inviscid, compressible flows
is governed by the following  Euler system:
\begin{equation}\label{1-1-ep}
\begin{cases}
\rho_t+\div(\rho {\bf u})=0,\\
(\rho{\bf u})_t+\div(\rho {\bf u}\otimes {\bf u}+ P I_n) =0,\\
(\rho{E})_t+\div(\rho{E}{\bf u}+P{\bf u})=0,\\
\end{cases}
\end{equation}
where ${\bf u}$ is the velocity, $\rho$ is the density, $P$ is the pressure, ${E}$ is the total
 energy,    $I_n$ is an $n\times n $ identity matrix, respectively.  For the ideal polytropic gas, the equation of state and the
energy are of the form
\begin{equation*}
P= e^S\rho^{\gamma} \quad{\rm {and}}\quad {E}=\frac1 2|{\bf u}|^2+\frac{ P}{(\gamma-1)\rho},
\end{equation*}
where $S$ is  the entropy and $ \gamma> 1 $ is the adiabatic exponent. Denote   the Mach
number $M$ by
$$M=\frac{|{\bf u}|}{c(\rho, S)}, $$
where
$ c(\rho, S)=\sqrt{\gamma e^S \rho^{\gamma-1}}$ is called the local sound speed. The flow is subsonic if $M<1$, sonic if $M=1$,
and supersonic if $M>1$.
\par The compressible Euler equations are not only fundamental mathematical models in fluid dynamics but also play a significant role in various engineering applications, including the design of projectiles, rockets, and aircraft. Over the past decades, the Euler equations have attracted considerable attention.  In particular,  the formation of finite-time singularities and the existence of global weak solutions for the isentropic Euler equations with inhomogeneous or geometric source terms have been extensively investigated, see \cite{C96,C17,C19,D89,K97,L64,L06,L13}. Furthermore, there have been many  interesting results on   smooth Euler flows through various nozzles, such as subsonic, subsonic-sonic, sonic-supersonic  and transonic flows, one may refer
to \cite{CX14,DWX14,DXY11,WX2013,WX15,WX2016,WX2019,WX2020,wang22, WYZ21,WX24, WX23} and references therein.
\par  In recent years, significant progress has also been made in the study of time-periodic solutions. For general quasilinear hyperbolic systems with time-periodic dissipative boundary conditions, the existence and stability of time-periodic classical solutions were established in \cite{F,Q20}. Concerning the compressible Euler equations,  the response process for supersonic waves was investigated in \cite{YH19,YZS23}, where all characteristics propagate forward in both space and time. After a sufficiently large start-up time
$T^\ast >0$, the inflow boundary condition completely governs the entire region, thereby ensuring the existence of time-periodic supersonic solutions. The corresponding non-isentropic analysis can be found in \cite{MSY23}.    However, the situation becomes  more complicated for subsonic flows, since some characteristics propagate forward while others propagate backward. The interaction between different families of characteristics makes the description of the response process and the construction of periodic solutions nontrivial. By developing a new type of iterative scheme, the well-posedness of subsonic time-periodic solutions for compressible Euler equations with source terms was established in \cite{FMY24,QYZ23-1,QYZ23,ZY25}. Most recently, the existence and stability of time-periodic transonic shock solutions have been investigated in \cite{ZQY26}.

\par In this paper, we are concerned with subsonic  time-periodic solutions to the radially symmetric compressible Euler system  with nonzero angular velocity in an annulus under temporal periodic boundary conditions prescribed on the inner and outer boundaries. Our goal is to establish the existence and  stability of time-periodic solutions which are closed to  steady radially symmetric subsonic spiral flows.
\subsection{Steady radially  symmetric subsonic spiral flows }\noindent
 \par The research on steady compressible flows is important for a variety of engineering applications, such as the design of projectiles, rockets, aircraft, and so on. The    steady Euler system takes the following form:
\begin{equation}\label{1-1}
\begin{cases}
\div(\rho {\bf u})=0,\\
\div(\rho {\bf u}\otimes {\bf u}+ P I_n) =0,\\
\div(\rho{E}{\bf u}+P{\bf u})=0.\\
\end{cases}
\end{equation}
 We first focus on steady radially symmetric  flows  in an annulus
$$\Omega=\left\{(x_1,x_2): r_0<r=\sqrt{x_1^2+x_2^2}<r_1\right\},$$
 where   $0<r_0<r_1$  are  positive constants.
 Introduce the polar coordinates
$(r, \theta)$:
 \begin{eqnarray}\label{coor}
r=\sqrt{x_1^2+x_2^2},\quad \theta=\arctan \frac{x_2}{x_1}.
\end{eqnarray}
Assume that the velocity,  the pressure and the entropy  are of the form
\begin{equation*}
\begin{aligned}
{\bf u}({\textbf x})= U_{1}(r,\th)\mathbf{e}_r+U_{2}(r,\th)\mathbf{e}_\theta, \ \
P({\textbf x})=P(r,\th),\ \ S({\textbf x})=S(r,\th),
\end{aligned}
\end{equation*}
 with
\begin{equation*}
\mathbf{e}_r =
\begin{pmatrix}
\cos \th  \\
  \sin \th \\
  \end{pmatrix}, \quad
\mathbf{e}_{\th } =
\begin{pmatrix}
  - \sin \th  \\
  \cos \th\\
  \end{pmatrix}, \quad
  \end{equation*}
where $ U_1 $ and  $ U_2  $ represent the radial velocity and the angular velocity, respectively.
Then
the steady compressible   Euler system \eqref{1-1} in the polar coordinates can be rewritten as
\begin{eqnarray}\label{1-2}
\begin{cases}
\begin{aligned}
&\partial_r(\rho U_1)+\partial_{\theta}(\rho U_2)+\frac{\rho U_1}r=0,\\
&\rho\left(U_1\partial_r +\frac{U_2}{r}\partial_{\theta}\right)U_1+\partial_r P-\frac{\rho U_2^2}r=0,\\
&\rho\left(U_1\partial_r +\frac{U_2}{r}\partial_{\theta}\right)U_2
+\frac{\partial_{\theta}P}{r}+\frac{\rho U_1U_2}{r}=0,\\
&\rho\left(U_1\partial_r +\frac{U_2}{r}\partial_{\theta}\right)S=0.\\
\end{aligned}
\end{cases}
\end{eqnarray}
Furthermore, we define the  Bernoulli function $B$   as
\begin{equation*}
B=\frac{1}{2}|{\bf u}|^2+\frac{\gamma P}{(\gamma-1)\rho}=\frac{1}{2}|{\bf u}|^2+\frac{\gamma e^S\rho^{\gamma-1}}{\gamma-1}.
\end{equation*}
\par  By the hodograph transformation, Courant and Friedrichs
    \cite[Section 104]{CF48} transformed the two-dimensional  potential equation into a linear second order differential equation on the velocity magnitude and the flow angle, and found some radially symmetric flows including circulatory flows, radial
flows and their superpositions spiral flows. These studies were further continued by the authors in \cite{WXY21}, where they studied radially symmetric transonic spiral flows on the physical plane and gave a complete classification of all possible flow patterns  with or without shocks.  The structural stability of these flows under perturbations of suitable boundary conditions in an annulus was proved in \cite{WYZ21}. The authors in \cite{WZ22} established the existence of the subsonic and subsonic-sonic spiral flow outside a porous body.  Furthermore, the authors in \cite{CMZ23}  investigated  radially
symmetric  spiral flows to the compressible Euler-Poisson system for semiconductors  in an annulus and gave a specific classification of the flow patterns. The structural stability of smooth   subsonic, supersonic and transonic spiral flows  to the steady Euler-Poisson system was established in \cite{WZ25-1,WZ25-2,WYZ24}. Recently, the authors in \cite{G26} proved
the global stability and time periodicity of radially symmetric steady supersonic spiral flows in an
annulus.

  \par For steady radially symmetric spiral flows,
 the  flow is described by smooth functions of the form
\begin{equation*}
{\bf u}({\textbf x})=\bar U_1(r)\mathbf e_r+\bar U_2(r)\mathbf e_{\th},
 \ \ P({\textbf x})=\bar P(r),\ \  S({\textbf x})=\bar S(r),\\
\end{equation*}
which solves the following system:
\begin{equation}\label{1-3}
	\begin{cases}
\begin{aligned}
		&( \bar \rho  \bar U_1)'(r)+\frac{( \bar \rho  \bar U_1)(r)}r=0 , \\
     	 &\bigg(\bar U_1 \bar U_1'+\frac{1}{\bar\rho}\bar P'\bigg)(r)-\frac{ \bar U_2^2(r)}{r}  =0,\\
    	 &(\bar U_1 \bar U_2')(r)+\frac{1}{r} (\bar U_1\bar U_2)(r)=0,\\
     	 &(\bar U_1 \bar S')(r)=0,\\
    	  \end{aligned}
	\end{cases}
\end{equation}
with the boundary conditions at the entrance $ r = r_0 $:
\begin{equation}\label{1-4}
\bar\rho(r_0)=\rho_0,\ \
\bar U_{1}(r_0)=U_{1,0},\ \
\bar U_{2}(r_0)=U_{2,0},\ \ \bar S(r_0)=S_0.
\end{equation}
Then the problem \eqref{1-3} with \eqref{1-4} is equivalent to
\begin{equation}\label{1-5}
\begin{cases}
\begin{aligned}
&(\bar \rho  \bar U_1)(r)=\frac{ r_0 \rho_0 U_{1,0}}{r}, \ \
\bar U_{2}(r)=\frac{r_0U_{2,0}}{r},\\
&\bar S(r)=  S_0, \ \
\bar B(r)= B_0=\frac{1}{2}(U_{1,0}^2+U_{2,0}^2)+\frac{ \gamma e^{S_0}\rho_0^{\gamma-1}}{\gamma-1}.\\
\end{aligned}
\end{cases}
\end{equation}
 We have the following well-posedness, which has been established in \cite{WXY21}.
\begin{proposition}\label{pro1}
    Fix $ \gamma> 1 $ and $ r_0> 0 $. Given positive constants    $\rho_0$, $ U_{1,0}$, $U_{2,0}$ and $S_0 $ satisfying
    \begin{equation}\label{1-6-in}
    U_{1,0}^2+U_{2,0}^2< c_0^2=\gamma e^{S_0}\rho_0^{\gamma-1},
    \end{equation}
    then
  the initial value problem \eqref{1-3} and \eqref{1-4} has a unique smooth  solution $(\bar\rho,\bar  U_{1}, \bar U_{2},  \bar S)$ $[r_0,+\infty) $  satisfying
  \begin{equation}\label{1-6}
   (\bar U_1^2+\bar U_2^2)(r)< \bar c^2(r), \ \  r\in[r_0,+\infty) ,
  \end{equation}
  where $\bar c^2=\gamma e^{S_0}\bar\rho^{\gamma-1}$.
\end{proposition}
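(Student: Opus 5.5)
The plan is to reduce \eqref{1-3}--\eqref{1-4} to a single algebraic equation for $\bar\rho$ at each $r$, using the conserved quantities in \eqref{1-5}, and then solve it on the subsonic branch by monotonicity. Since $U_{1,0}>0$ and $\rho_0>0$, any smooth solution has $\bar U_1>0$ near $r_0$. Wherever $\bar U_1\neq 0$, the fourth equation gives $\bar S\equiv S_0$, the third gives $(r\bar U_2)'=0$, and the first gives $(r\bar\rho\bar U_1)'=0$. Substituting $\bar P=e^{S_0}\bar\rho^\gamma$ into the second equation and using the first and third turns it into $\bar B'=0$. So a smooth solution with $\bar U_1>0$ is equivalent to \eqref{1-5}. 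Writing $m=r_0\rho_0U_{1,0}$ and $\bar U_1=m/(r\bar\rho)$, the problem becomes finding $\bar\rho(r)$ with
\begin{equation*}
F(\rho,r):=\frac{m^2}{2r^2\rho^2}+\frac{r_0^2U_{2,0}^2}{2r^2}+\frac{\gamma e^{S_0}\rho^{\gamma-1}}{\gamma-1}=B_0, \qquad \bar\rho(r_0)=\rho_0 .
\end{equation*}

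Next I would study $F$ in $\rho$ for fixed $r$. We have
\begin{equation*}
\partial_\rho F=\frac{1}{\rho}\Big(c^2(\rho)-\frac{m^2}{r^2\rho^2}\Big)=\frac{1}{\rho}\big(c^2-U_1^2\big).
\end{equation*}
Hence $F(\cdot,r)$ is strictly convex-like, with a unique minimum at $\rho_*(r)$, where $U_1^2=c^2$. It decreases for $\rho<\rho_*$, increases for $\rho>\rho_*$, and tends to $+\infty$ at both ends. The branch $\rho>\rho_*$ is the radially subsonic one, $U_1^2<c^2$.

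The key quantitative point is the minimum value:
\begin{equation*}
F(\rho_*(r),r)=\Big(\tfrac12+\tfrac1{\gamma-1}\Big)c_*^2+\frac{r_0^2U_{2,0}^2}{2r^2}.
\end{equation*}
Here $c_*^2=(\gamma e^{S_0})^{2/(\gamma+1)}(m/r)^{2(\gamma-1)/(\gamma+1)}$. Both terms are strictly decreasing in $r$. At $r=r_0$, the hypothesis \eqref{1-6-in} puts $\rho_0$ on the subsonic branch, so $F(\rho_*(r_0),r_0)<F(\rho_0,r_0)=B_0$. Therefore $\min_\rho F(\cdot,r)<B_0$ for all $r\ge r_0$. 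Consequently there is exactly one root $\bar\rho(r)>\rho_*(r)$. Because $\partial_\rho F>0$ there, the implicit function theorem makes $\bar\rho$ smooth in $r$, and continuity fixes $\bar\rho(r_0)=\rho_0$.

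Uniqueness follows because a smooth solution starting at $\rho_0>\rho_*(r_0)$ cannot cross $\rho_*$: at a crossing $\partial_\rho F=0$, which is impossible while $F(\rho_*,r)<B_0$. This also keeps $\bar U_1>0$ throughout, which the reduction needed.

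The main obstacle, in my estimate, is \eqref{1-6}, i.e. including $\bar U_2$ in the sonic comparison; the steps above only give $\bar U_1^2<\bar c^2$. I would prove it via $M^2=|\mathbf u|^2/c^2$ as a function of $r$. From Bernoulli, $|\mathbf u|^2=2B_0-\tfrac{2}{\gamma-1}c^2$, so $M^2<1$ iff $c^2>\tfrac{\gamma-1}{\gamma+1}2B_0=c_{cr}^2$, i.e. $\bar\rho>\rho_{cr}$, with $\rho_{cr}$ independent of $r$. So it suffices to show $\bar\rho$ is nondecreasing in $r$, since $\bar\rho(r_0)=\rho_0>\rho_{cr}$ by \eqref{1-6-in}. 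Differentiating the identity $F(\bar\rho,r)=B_0$ gives
\begin{equation*}
\bar\rho'\,\partial_\rho F=-\partial_rF=\frac{\bar U_1^2+\bar U_2^2}{r}>0.
\end{equation*}
Since $\partial_\rho F>0$ on our branch, $\bar\rho'>0$ and hence $\bar c^2(r)$ increases. Therefore $(\bar U_1^2+\bar U_2^2)(r)<\bar c^2(r)$ for all $r\ge r_0$.
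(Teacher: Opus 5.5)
Your proof is correct. The paper does not prove this proposition itself. It only records that \eqref{1-3}--\eqref{1-4} is equivalent to the first integrals \eqref{1-5} and then quotes the well-posedness from \cite{WXY21}. So your argument is a self-contained proof of a statement the paper takes on citation, and it begins from the same reduction.

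Your route is purely algebraic:
\begin{itemize}
\item You reduce to the scalar Bernoulli relation $F(\rho,r)=B_0$.
\item You use that $F(\cdot,r)$ is unimodal, with its minimum at the radially sonic density $\rho_*(r)$.
\item You observe that this minimum value decreases strictly in $r$.
\end{itemize}
This avoids integrating the ODE $\bar\rho'=\bar\rho(\bar U_1^2+\bar U_2^2)/\big(r(\bar c^2-\bar U_1^2)\big)$ and ruling out sonic points by a continuation argument.

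It also gives slightly more than the statement asks:
\begin{itemize}
\item Global existence on $[r_0,\infty)$ needs only $U_{1,0}<c_0$.
\item Uniqueness holds among all smooth solutions with positive density, not only those satisfying \eqref{1-6}.
\item The identity $\bar\rho'\,\partial_\rho F=(\bar U_1^2+\bar U_2^2)/r$ gives $\bar\rho'>0$. This is exactly the mechanism behind the facts $(\bar c^2-\bar U_1^2)'>0$ and $(\bar c-\bar U_1)'>0$ that the paper uses in Section~2 (cf.\ \eqref{3-18-b}).
\end{itemize}

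One small point of presentation. Your uniqueness step (``this also keeps $\bar U_1>0$ throughout'') is clearest written on the maximal interval $[r_0,r^\dagger)$ where $\tilde U_1>0$. On that interval the reduction forces $\tilde\rho=\bar\rho$, so $\tilde U_1=m/(r\bar\rho)$ stays bounded away from $0$ up to $r^\dagger$. This contradicts maximality unless $r^\dagger=\infty$.
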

\par For any $ r_1>r_0 $, let
 $(\bar\rho,\bar  U_{1}, \bar U_{2},  \bar S)$ be the background subsonic  solutions to \eqref{1-3}  in $\Omega $  corresponding to the entrance data $(\rho_0,U_{1,0},U_{2,0},S_0)$. This paper is going to establish the existence and stability of  subsonic  time-periodic solutions with nonzero angular velocity on the perturbation of the background subsonic solutions.
 \subsection{Mathematical problem and main results}\noindent
 For unsteady radially symmetric flows, \eqref{1-2} can be rewritten  as
\begin{equation}\label{1-7}
	\begin{cases}
\begin{aligned}
		&\p_t\rho+\partial_r(\rho U_1)+\frac{\rho U_1}r=0,\\
&\p_tU_1+ U_1\partial_r U_1+\frac{\partial_r P}{\rho}-\frac{ U_2^2}r=0,\\
&\p_tU_2+ U_1\partial_rU_2
+\frac{ U_1U_2}{r}=0,\\
&\p_tS+U_1\partial_r S=0.\\
    	  \end{aligned}
	\end{cases}
\end{equation}
Set $\bm {\Phi}(t,r)=(\rho,  U_{1},  U_{2},   S)^{\top}(t,r)$. Then \eqref{1-7} can be rewritten as
\begin{equation}\label{1-8}
\bm {\Phi}_t +A_1(\bm {\Phi})\p_{r}\bm {\Phi}+A_2(\bm {\Phi})
=0,
\end{equation}
where
\begin{equation*}
A_1(\bm {\Phi})=\left(\begin{array}{cccc}
U_1 & \rho & 0 & 0\\
\gamma e^S\rho^{\gamma-2} & U_1 &0& \rho^{\gamma-1} \\
 0& 0  & U_1& 0\\
 0& 0 & 0 & U_1
 \end{array}\right),\quad A_2(\bm {\Phi})=\left(\begin{array}{cccc}
\frac{\rho U_1}{ r} \\
 \frac{ U_2^2}{ r} \\
 \frac{ U_1U_2}{ r}\\
 0\\
\end{array}\right).
 \end{equation*}
 The eigenvalues  are
\begin{equation*}
\lambda_1=U_1-c,\ \  \lambda_2=\lambda_3=U_1, \ \ \lambda_4=U_1+c.
\end{equation*}
Set
$$\bar\lambda_1=\bar U_1-\bar c<0, \ \
\bar\lambda_2=\bar\lambda_3=\bar U_1>0,\ \
\bar\lambda_4=\bar U_1+\bar c>0.$$
  Then one has
  \begin{equation}\label{1-8-ei}
\lambda_1<0 < \lambda_2=\lambda_3< \lambda_4, \ \ \bm {\Phi}\in \mathfrak{W}
\end{equation}
for a small neighborhood of  $\mathfrak{W} $ of $(\x\rho,  \x U_{1}, \x U_{2}, \x  S)^{\top}$.

\par Define the generalized Riemann invariants
\begin{equation}\label{1-9}
R_1=U_1-\frac{2c}{\gamma-1}+\mathfrak {c}(S-S_0), \  \ R_2=U_2,\ \ R_3=S, \ \ R_4=U_1+\frac{2c}{\gamma-1}-\mathfrak {c}(S-S_0),
\end{equation}
where $ \mathfrak {c}(r)=\frac{\bar c(r)}{\gamma(\gamma-1)}$.
Then the system \eqref{1-7} can be rewritten  into the following diagonal form:
 \begin{equation}\label{1-10}
	\begin{cases}
\begin{aligned}
		&\p_t R_1+\lambda_1\p_r R_1=
 \frac{c U_1}{r} + \frac{U_2^2}{r} + \frac{(c-\bar{c})c}{\gamma(\gamma-1)} \partial_r S + (S-S_0) \lambda_1\mathfrak{c}'(r) \\
 &=\frac{\gamma-1}{8r} (R_4^2 - R_1^2) + \frac{(\gamma-1)\mathfrak{c}}{4r} (S-S_0)(R_1 + R_4) + \frac{R_2^2}{r} \\
 &\quad+ \frac{(c-\bar{c})c}{\gamma(\gamma-1)} \partial_r R_3+ (S-S_0) \lambda_1\mathfrak{c}'(r),\\
&\p_t R_2+\lambda_2\p_r R_2=-\frac{U_1U_2}{2r}=-\frac{R_2(R_1+R_4)}{2r},\\
&\p_t R_3+\lambda_3\p_r R_3=0,\\
&\p_t R_4+\lambda_4\p_r R_4
 =- \frac{c U_1}{r} + \frac{U_2^2}{r} + \frac{(c-\bar{c})c}{\gamma(\gamma-1)} \partial_r S - (S-S_0)\lambda_4 \mathfrak{c}'(r). \\
&=-\frac{\gamma-1}{8r} (R_4^2 - R_1^2) - \frac{(\gamma-1)\mathfrak{c}}{4r} R_3(R_1 + R_4) + \frac{R_2^2}{r} \\
&\quad+   \frac{(c-\bar{c})c}{\gamma(\gamma-1)}   \partial_r R_3- (S-S_0) \lambda_4\mathfrak{c}'(r).\\
    	  \end{aligned}
	\end{cases}
\end{equation}
We also write \eqref{1-3} into the equations of Riemann
invariants:
\begin{equation}\label{1-11}
	\begin{cases}
\begin{aligned}
		&\bar\lambda_1 \bar R_1'=\frac{\bar c \bar U_1}{r} + \frac{\bar U_2^2}{r}=\frac{\bar R_2^2}r+\frac{\gamma-1}{8r}(\bar R_4^2-\bar R_1^2),\\
&\bar\lambda_2\bar  R_2'=-\frac{\bar U_1 \bar U_2}{2r},\\
 &\bar R_3=S_0,\\
&\bar\lambda_4 \bar R_4'=-\frac{\bar c \bar U_1}{r} + \frac{\bar U_2^2}{r}=\frac{\bar R_2^2}r-\frac{\gamma-1}{8r}(\bar R_4^2-\bar R_1^2),
    	  \end{aligned}
	\end{cases}
\end{equation}
where
\begin{equation*}
	\begin{aligned}
 \bar R_1=\bar U_1-\frac{2\bar c}{\gamma-1}, \  \ \bar R_2=\bar U_2,\ \ \bar R_3= S_0, \ \ \bar R_4=\bar U_1+\frac{2\bar c}{\gamma-1}.
  \end{aligned}
	\end{equation*}
Furthermore, the initial data and boundary conditions of \eqref{1-10} can be imposed as follows:
\begin{equation}\label{1-12}
	\begin{cases}
t=0: (R_{1},R_2,R_3,R_4)(0,r)=(R_{1,0},R_{2,0},R_{3,0},R_{4,0})(r),\\
r=r_0: \begin{cases}
R_2(t,r_0) = G_{2}(t)+K_2
(R_{1}(t,r_0)-\x{R}_{1}(r_0)),\\
R_3(t,r_0) =  G_{3}(t)+K_3
(R_{1}(t,r_0)-\x{R}_{1}(r_0)), \\
R_4(t,r_0) = G_{4}(t)+K_4
(R_{1}(t,r_0)-\x{R}_{1}(r_0)), \\
\end{cases} \\
r=r_1: R_{1}(t,r_1)=G_{1}(t)-(R_{4}(t,r_1)-\x R_{4}(r_1)),\\
 \end{cases}
	\end{equation}
where  $(K_2,K_3,K_4)$ are given  constants  satisfying
\begin{equation}\label{1-12-ds}
   |K_{i}|<1, \ \ i=2,3,4,
\end{equation}
and  $(G_{1},G_2,G_3,G_4)(t)$ are $C^{1}$ time-periodic functions  with the period $ T>0$.

\par Let
\begin{equation*}
	\begin{aligned}
\bm\Psi=(\Psi_{1},\Psi_{2},\Psi_{3},\Psi_4)^{\top}
=(R_{1}-\x R_{1},R_{2}-\x R_{2},R_{3}-\x R_{3},R_{4}-\x R_{4})^{\top},\
\bar{\bf R}=(\x R_{1},\x R_{2},\x R_{3},\x R_{4})^{\top}.
 \end{aligned}
	\end{equation*}
 Then one has
\begin{equation*}
	\begin{aligned}
&\lambda_1(\bm\Psi+\bar{\bf R})=U_1-c=\frac{\gamma+1}4(\Psi_1+\bar R_1-\mathfrak {c}\Psi_3)+\frac{3-\gamma}4(\Psi_4+\bar R_4+\mathfrak {c}\Psi_3),\\
&\lambda_2(\bm\Psi+\bar{\bf R})=\lambda_3(\bm\Psi+\bar{\bf R})
=U_1=\frac12(\Psi_1+\bar R_1+\Psi_4+\bar R_4),\\
&\lambda_4(\bm\Psi+\bar{\bf R})=U_1+c=\frac{3-\gamma}4(\Psi_1+\bar R_1-\mathfrak {c}\Psi_3)+\frac{\gamma+1}4(\Psi_4+\bar R_4+\mathfrak {c}\Psi_3).\\
  \end{aligned}
	\end{equation*}
Furthermore, it follows from the third equation  in \eqref{1-10} that
\begin{equation*}\begin{aligned} &\partial_t \Psi_3 + \lambda_1(\bm\Psi+\bar{\bf R}) \partial_r \Psi_3=-c\partial_r\Psi_3,\ \partial_t \Psi_3 + \lambda_4(\bm\Psi+\bar{\bf R}) \partial_r \Psi_3=c\partial_r \Psi_3.\end{aligned}\end{equation*}
Thus $ \Psi_i $ satisfies
\begin{equation}\label{1-13-mo-1}
\begin{cases}
\begin{aligned}
	&\partial_t \Psi_1 + \lambda_1(\bm\Psi+\bar{\bf R}) \partial_r \Psi_1
= \bigg( -\frac{\gamma+1}{4}\bar R_1' - \frac{\gamma-1}{4r}\bar R_1 \bigg)\Psi_1 + \frac{2\bar R_2}{r} \Psi_2  \\
&\  + \bigg( -\frac{3-\gamma}{4}\bar R_1' + \frac{\gamma-1}{4r}\bar R_4 \bigg) \Psi_4+ \frac{\Psi_2^2}{r}+ \frac{\gamma-1}{8r}(\Psi_4^2 - \Psi_1^2) \\
&\ +\frac{(\gamma-1)\mathfrak {c} }{4r}\Psi_3(\Psi_1+ \Psi_4) +\bigg(\frac{\gamma-1}{2}\bar R_1' + \frac{\gamma-1}{4r}(\bar R_1+\bar R_4)\bigg)\mathfrak {c}\Psi_3\\
 &\ +\lambda_1( \bm\Psi+\bar{\bf R})\mathfrak {c}'(r)\Psi_3-\bigg(\frac{{\Psi}_4 - {\Psi}_1}{4\gamma} + \frac{(\gamma-1)\mathfrak{c}\Psi_3}{2}\bigg)\bigg(\partial_t \Psi_3 + \lambda_1( \bm\Psi+\bar{\bf R}) \partial_r \Psi_3\bigg),\\
 &\partial_t \Psi_2 +  \lambda_2( \bm\Psi+\bar{\bf R}) \partial_r \Psi_2
=-\frac{\lambda_2(\bm\Psi+\bar{\bf R})}r\Psi_2,\\
&\partial_t \Psi_3 +  \lambda_3(\bm\Psi+\bar{\bf R}) \partial_r \Psi_3=0,\\
&\partial_t {\Psi}_4 + \lambda_4( \bm\Psi+\bar{\bf R}) \partial_r {\Psi}_4 =   \left( -\frac{3-\gamma}{4} \bar R_4' + \frac{\gamma-1}{4r}\bar R_1 \right) {\Psi}_1 + \frac{2\bar R_2}{r} \Psi_2   \\
&\  + \bigg( -\frac{\gamma+1}{4} \bar R_4' - \frac{\gamma-1}{4r}\bar R_4 \bigg) \Psi_4+ \frac{\Psi_2^2}{r} - \frac{\gamma-1}{8r}(\Psi_4^2 - \Psi_1^2)\\
&\  -\frac{(\gamma-1)\mathfrak {c} }{4r }\Psi_3(\Psi_1+ \Psi_4) -\bigg(\frac{\gamma-1}{2}\bar R_4' + \frac{\gamma-1}{4r}(\bar R_1+\bar R_4)\bigg)\mathfrak {c}\Psi_3 \\
&\ -\lambda_4( \bm\Psi+\bar{\bf R})\mathfrak {c}'\Psi_3 +\bigg(\frac{{\Psi}_4 - {\Psi}_1}{4\gamma} + \frac{(\gamma-1)\mathfrak{c}\Psi_3}{2}\bigg)\bigg(\partial_t \Psi_3 + \lambda_4(  \bm\Psi+\bar{\bf R})\partial_r \Psi_3\bigg).\\
\end{aligned}
\end{cases}
\end{equation}
The corresponding initial data and boundary conditions are
\begin{equation}\label{1-14}
\begin{cases}
t=0: (\Psi_{1},\Psi_{2},\Psi_{3},\Psi_4)(0,r)=(\Psi_{1,0},\Psi_{2,0},\Psi_{3,0},
\Psi_{4,0})(r)\\
r=r_0: \begin{cases}
\Psi_2(t,r_0) = H_{2}(t)+K_2
\Psi_{1}(t,r_0),\\
\Psi_3(t,r_0) = H_{3}(t)+K_3
\Psi_{1}(t,r_0),\\
\Psi_4(t,r_0) = H_{4}(t)+K_4
\Psi_{1}(t,r_0),\\
\end{cases} \\
r=r_1: \Psi_{1}(t,r_1)=H_{1}(t)-\Psi_{4}(t,r_1),
\end{cases}
\end{equation}
where \begin{equation*}\begin{aligned}
& (\Psi_{1,0},\Psi_{2,0},\Psi_{3,0},
\Psi_{4,0})(r)=(R_{1,0}-\x R_{1},R_{2,0}-\x R_{2},R_{3,0}-\x R_{3},R_{4,0}-\x R_4)(r),\\
&H_{1}(t)=G_{1}(t)-\x{\Psi}_{1}(r_1),  \ \ (H_{2},H_{3},H_4)(t)=(G_{2},G_{3},G_4)(t)
-(\x{\Psi}_{2},\x{\Psi}_{3},\x{\Psi}_{4})(r_0).\end{aligned}\end{equation*}
Note that
\begin{equation*}
\lambda_{1}(\bar{\bf R})<0<\lambda_{2}(\bar{\bf R})=\lambda_{3}(\bar{\bf R})
<\lambda_{4}(\bar{\bf R}).
\end{equation*}
By \eqref{1-8-ei}, one gets
$$
\lambda_{1}(\bm\Psi+\bar{\bf R})<0
<\lambda_{2}(\bm\Psi+\bar{\bf R})=\lambda_{3}(\bm\Psi+\bar{\bf R})
<\lambda_{4}(\bm\Psi+\bar{\bf R}), \ \ \bm\Psi\in \mathfrak{V},
$$
where $\mathfrak{V}$ is a small neighborhood of $O=(0,0,0,0)^{\top}$ corresponding to $\mathfrak{W}$. Denote $$\mu_{i}(\bm\Psi+\bar{\bf R})=\lambda_{i}^{-1}(\bm\Psi+\bar{\bf R}), \ \ i=1,2,3,4.$$ Then there exists a positive constant $\mk$ depending only on $( \gamma,r_0,r_1,\rho_0,U_{1,0},U_{2,0},S_0) $  such that
\begin{equation}\label{1-15}
\max_{i=1,2,3,4}\sup_{\bm\Psi\in \mathfrak{V}}|\mu_{i}(\bm\Psi+\bar{\bf R})|\leq \mk.
\end{equation}
\par  The main results in this paper are stated as follows.

\begin{theorem}\label{th1}
(Existence of the time-periodic solution) Let $(\bar\rho,\bar  U_{1}, \bar U_{2},  \bar S)$ be the background subsonic solutions associated with the   data $(\rho_0,U_{1,0},U_{2,0},S_0) $. There exists a positive constant $\xi_0$ depending only on the $( K_2,K_3,K_4,\gamma,\rho_0,U_{1,0},U_{2,0},S_0) $ such that for
\begin{equation}\label{1-16-re}
r_1\in\Big(r_0,r_0(1+\xi_0)\Big),
\end{equation}
the  following holds: there exist  positive  constants $\sigma_{1}$ and  $\mc_1$ such that for any  $0<\sigma<\sigma_{1}$ and  any  $C^{1}$ functions $(H_{1},H_{2},H_3,H_4)(t)$ defined in \eqref{1-14} satisfying
\begin{equation}\label{1-16}
H_{i}(t+T)=H_{i}(t),\ \ t>0, \ \
\|H_{i}\|_{C^{1}(\mathbb{R}_{+})}\leq \sigma,
\end{equation}
there exists a $C^{1}$ smooth function ${\bm\Psi}_{0}(r)=\bigg(\Psi_{1,0}(r),\Psi_{2,0}(r),\Psi_{3,0}(r),
\Psi_{4,0}(r)\bigg)^{\top}$ satisfying
\begin{equation}\label{1-19}
\|\Psi_{i,0}\|_{C^{1}([r_0,r_1])}\leq \mc_1\sigma,
\end{equation}
such that the initial-boundary value problem \eqref{1-13-mo-1}  and \eqref{1-14} admits a $C^{1}$ time-periodic solution ${\bm\Psi}^{(T)}(t,r)=\bigg(\Psi_{1}^{(T)},\Psi_{2}^{(T)},\Psi_{3}^{(T)},
\Psi_{4}^{(T)}\bigg)^{\top}$ on $D=\bigg\{(t,r):t\in\mathbb{R}_{+},r\in[r_0,r_1]\bigg\}$, which satisfies
\begin{equation}\label{1-21}
{\bm\Psi}^{(T)}(t+T,r)={\bm\Psi}^{(T)}(t,r),\ \ \forall(t,r)\in D,
\end{equation}
and
\begin{equation}\label{1-22}
\|\Psi_{i}^{(T)}\|_{C^{1}(D)}\leq \mc_1\sigma.
\end{equation}
%\begin{equation}\label{1-22-hy}
%\|\Psi_{2}^{(T)}\|_{C^{0}(D)}\leq \mc_1M_{2},\ \ \max\bigg\{\|\partial_{t}\Psi_{2}^{(T)}\|_{C^{0}(D)},\ \ \|\partial_{r}\Psi_{2}^{(T)}\|_{C^{0}(D)}\bigg\}\leq\mc_1\sigma,
%\end{equation}
\end{theorem}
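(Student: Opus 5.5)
The plan is to construct the time-periodic solution as a fixed point of a map on a closed set of $T$-periodic functions. This follows the iteration schemes used for subsonic time-periodic Euler flows in \cite{QYZ23,ZY25}, adapted to the present source terms.

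\emph{Setting up the iteration.} Define
\begin{equation*}
\Sigma(\sigma)=\Big\{\bm\Psi\in C^1(D):\ \bm\Psi(t+T,r)=\bm\Psi(t,r),\ \|\Psi_i\|_{C^0(D)}\le \mc_1\sigma,\ \|\p_t\Psi_i\|_{C^0(D)}+\|\p_r\Psi_i\|_{C^0(D)}\le \mc_1\sigma\Big\}.
\end{equation*}
Given $\tilde{\bm\Psi}\in\Sigma(\sigma)$, freeze the characteristic speeds $\lambda_i(\tilde{\bm\Psi}+\bar{\bf R})$ and solve a decoupled linear problem for $\bm\Psi$.

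The transport equations for $\Psi_2,\Psi_3$ are solved first. Their speeds are positive, so the characteristics enter from $r=r_0$, and the data are $H_{2,3}+K_{2,3}\tilde\Psi_1(t,r_0)$. The $\Psi_3$ equation is homogeneous and the $\Psi_2$ equation is linear in $\Psi_2$. Because the domain is $t\in\mathbb R$ in the periodic setting and every characteristic reaches the boundary in time at most $\mk(r_1-r_0)$, the solution is uniquely determined and $T$-periodic.

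Next solve for $\Psi_1$ backward from $r=r_1$, with data $H_1-\tilde\Psi_4(t,r_1)$, and for $\Psi_4$ forward from $r=r_0$, with data $H_4+K_4\tilde\Psi_1(t,r_0)$. In both equations the right-hand sides are evaluated at $\tilde{\bm\Psi}$, except for the linear terms in the unknown itself.

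The key point is the term $(\ldots)(\p_t\Psi_3+\lambda_{1}\p_r\Psi_3)$, which would otherwise cost a derivative. Since $\p_t\Psi_3+\lambda_1\p_r\Psi_3=-c\,\p_r\Psi_3$, I would rewrite it as a derivative along the first (respectively fourth) characteristic of $\Psi_3$. I then move it to the left-hand side by setting $W_1=\Psi_1+\big(\frac{\Psi_4-\Psi_1}{4\gamma}+\frac{(\gamma-1)\mathfrak c\Psi_3}{2}\big)\Psi_3$ (with the coefficient frozen using $\tilde{\bm\Psi}$), and likewise for $W_4$. The remainder is quadratic and contains no derivatives of $\Psi_3$ beyond $C^0$ level. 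This reproduces the first key ingredient mentioned in the abstract.

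\emph{Estimates and width restriction.} Integrating along characteristics from the boundary, a characteristic crosses the annulus in $t$-time at most $\mk(r_1-r_0)$. The linear zeroth-order coefficients, including those of size $\bar R_2/r$ and $\bar R_j'$ coming from the nonzero angular velocity, are bounded by a constant $C_0$ that depends on the background but not on smallness. Accumulation along a characteristic therefore gives factors $e^{C_0\mk(r_1-r_0)}$ and $C_0\mk(r_1-r_0)$.

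I chain the boundary reflections $\Psi_1(r_0)\to\Psi_{2,3,4}(r_0)\to\Psi_4(r_1)\to\Psi_1(r_1)\to\Psi_1(r_0)$. The reflection coefficients have modulus $|K_i|<1$ at $r_0$ and $1$ at $r_1$, and their product with the growth factor must be strictly below $1$. Choosing $\xi_0$ so that $\max|K_i|\,e^{2C_0\mk r_0\xi_0}(1+C_0\mk r_0\xi_0)<1$ ensures the $C^0$ bound closes with $\mc_1$ chosen from the $H$-data.

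The derivative estimates are obtained the same way. I differentiate in $t$, and since the coefficients are independent of $t$ the structure is preserved; $\p_r$ derivatives are then recovered from the equations using $|\mu_i|\le\mk$ from \eqref{1-15}. Quadratic terms are $O(\sigma^2)$, so they are absorbed by taking $\sigma_1$ small. This shows the map sends $\Sigma(\sigma)$ into itself.

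\emph{Contraction, limit and main obstacle.} I then show the map is a contraction in the $C^0$ norm, which requires no derivative loss thanks to the $W_1,W_4$ reformulation, again with factor below one by the same width condition. Since $\Sigma(\sigma)$ is closed in $C^0$ and its $C^1$ bounds pass to the limit via Arzelà–Ascoli (with an equicontinuity estimate for derivatives, done as in \cite{QYZ23}), the fixed point is a $C^1$ periodic solution satisfying \eqref{1-21} and \eqref{1-22}. Setting $\bm\Psi_0=\bm\Psi^{(T)}(0,\cdot)$ gives \eqref{1-19}, and uniqueness for the initial-boundary problem identifies the solution.

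The main obstacle I expect is closing the $C^1$ estimate and the contraction. The $\p_r\Psi_3$ coupling and the non-small zeroth-order terms (no smallness of the background) must be dominated only through the width restriction \eqref{1-16-re} combined with $|K_i|<1$. I would first attempt this by bounding the whole reflection cycle by the explicit factor above before optimizing constants.
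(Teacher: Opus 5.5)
Your overall architecture is the paper's: frozen speeds, integration along characteristics from the correct boundary, $|K_i|<1$ at $r_0$ plus a small width, $C^1$ bounds via $\p_t$, equicontinuity and Arzel\`a--Ascoli. Bounding the non-small zeroth-order terms crudely by Gronwall also works. It costs factors $(r_1/r_0)^{C}$ and $C\ln(r_1/r_0)$, with $C$ controlled by the entrance data, and is an acceptable substitute for the weights $F_1,F_2$ and the condition $\mf_1(\xi)\le 1$. Two steps, however, do not work as you set them up.

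\emph{Jacobi ordering at the non-dissipative outer boundary.} You take the datum for $\Psi_1$ at $r_1$ from the frozen input, so $\Psi_1(t,r_1)=H_1(t)-\tilde\Psi_4(t,r_1)$ holds exactly, with coefficient $1$. If $\tilde\Psi_4(\cdot,r_1)$ attains $\mc_1\sigma$ at a time where $H_1=-\sigma$, then $\|\Psi_1\|_{C^0}=(\mc_1+1)\sigma$. So $\Sigma(\sigma)$, with one common bound for all four components, is not mapped into itself. For the same reason $\delta\Psi_1(t,r_1)=-\delta\tilde\Psi_4(t,r_1)$, so the one-step map has Lipschitz constant at least $1$ in the unweighted $C^0$ norm, and the claimed contraction fails. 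Your reflection chain produces a factor below one only after two passes. The paper avoids this by a Gauss--Seidel ordering: $\Psi_4^{(k)}$ is computed first, and $\Psi_1^{(k)}(t,r_1)=\mh_1(t)-\Psi_4^{(k)}(t,r_1)$ uses the \emph{new} iterate (see \eqref{3-5} and \eqref{3-66-bu}). The gain $\chi_1<1$ coming from $|K_4|<1$ is therefore already inside $\Psi_1^{(k)}$'s boundary datum. You can keep your map if you use a weighted norm instead. Bound $\Psi_1,\Psi_2,\Psi_3$ by $A$ times the bound for $\Psi_4$, with $1<A<1/K$ and $K=\max_i|K_i|$, as the paper does with $A_2$ in Section 3. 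The closing conditions then take the form $e^{C\xi}(K+C\xi)A<1$ and $e^{C\xi}(1+C\xi A)<A$, both true for small $\xi$. Alternatively, iterate the map twice.

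\emph{The change of unknown does not remove the derivative loss.} Freeze $\tilde a=\frac{\tilde\Psi_4-\tilde\Psi_1}{4\gamma}+\frac{(\gamma-1)\mathfrak c\tilde\Psi_3}{2}$ and set $W_1=\Psi_1+\tilde a\Psi_3$. Then $W_1$ satisfies an equation with the new source $\Psi_3(\p_t+\tilde\lambda_1\p_r)\tilde a$. This source is harmless for the $C^0$ bound, where the substitution is not even needed because the input's $C^1$ norm is available. The trouble comes at the next level:
\begin{itemize}
\item Differentiating in $t$ for the $C^1$ bound produces $\frac{1}{4\gamma}\Psi_3(\p_t+\tilde\lambda_1\p_r)\p_t\tilde\Psi_4$. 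This is a second derivative of the frozen $\tilde\Psi_4$ in a direction transversal to its own characteristics, and $\Sigma(\sigma)$ does not control it.
\item The difference of two images contains $\Psi_3'(\p_t+\tilde\lambda_1\p_r)(\tilde a-\tilde a')$, a derivative of the difference of the inputs. So your $C^0$ contraction is not free of derivative loss either.
\end{itemize}
The substitution only moves the derivative from $\Psi_3$ onto $\tilde a$. The missing step is to integrate by parts along the characteristic you are integrating on. Write the offending term as $\frac{\de}{\de s}(\cdot)$ along $t_1$ (resp.\ $t_4$) and move the derivative onto the $C^1$-bounded factor, leaving only boundary terms and $C^0$ quantities, exactly as in \eqref{3-5-t-1-1}. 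With that device both the $C^1$ estimate and the contraction close, and $W_1,W_4$ become unnecessary.
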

\begin{theorem}\label{th2}($C^{0}$ Stability of the time-periodic solution) There exist positive  constants $\sigma_{2}\in(0,\sigma_{1})$ and $\mc_2$ such that for any given $\sigma\in(0,\sigma_{2})$ and any given $C^{1}$ smooth functions ${\bm\Psi}_{0}(r)$ and $H_{i}(t)$ $(i=1,2,3,4)$ satisfying \eqref{1-16} with certain compatibilities, the initial-boundary value problem \eqref{1-13-mo-1} and \eqref{1-14} has a unique global $C^{1}$ classical solution ${\bm\Psi}(t,r)$ on $D$ satisfying
\begin{equation}\label{1-24}
\|\bm\Psi(t,\cdot)-{\bm\Psi}^{(T)}(t,\cdot)\|_{C^{0}([r_0,r_1])}\leq \mc_2\sigma\varepsilon^{[t/T_{0}]},\ \ \forall t\geq 0,
\end{equation}
where ${\bm\Psi}^{(T)}$, depending on $(H_{1},H_{2},H_3,H_4)(t)$, is the time-periodic solution given through Theorem \ref{th1}, $\varepsilon\in(0,1)$ is a constant, $T_{0}=\max\limits_{i=1,2,3,4}\sup\limits_{\bm\Psi\in \mathfrak{V}}\frac{r_1-r_0}{|\lambda_{i}(\bm\Psi+\bar{\bf R})|}$ and $[t/T_{0}]$ denotes the largest integer smaller than $t/T_{0}$.
\end{theorem}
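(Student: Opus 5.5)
The plan is to treat Theorem \ref{th2} as a contraction estimate for the difference of two solutions, measured along characteristics over time blocks of length $T_0$. First we obtain the global existence of the $C^1$ solution ${\bm\Psi}(t,r)$. The uniform a priori $C^1$ estimates are those used for Theorem \ref{th1}. They rely on two devices. One is to rewrite $\partial_r\Psi_3$ in the $\Psi_1$ and $\Psi_4$ equations as the directional derivatives $(\partial_t+\lambda_1\partial_r)\Psi_3=-c\,\partial_r\Psi_3$ and $(\partial_t+\lambda_4\partial_r)\Psi_3=c\,\partial_r\Psi_3$; absorbing these turns the right-hand sides into derivatives along the first or fourth characteristic, so no derivative is lost. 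The other is the width restriction \eqref{1-16-re}, which keeps the accumulation of the zeroth-order terms along characteristics small. With local existence for quasilinear hyperbolic initial-boundary problems with nonlinear boundary conditions (assuming the compatibility conditions at the corners), these uniform bounds on $\|\bm\Psi(t,\cdot)\|_{C^1}\le \mc\sigma$, independent of $t$, give global existence by continuation. Uniqueness follows from the standard $C^1$ theory.

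Next set $\bm W=\bm\Psi-{\bm\Psi}^{(T)}$. Subtracting the equations \eqref{1-13-mo-1} for the two solutions gives a linear-looking system
\[
\partial_t W_i+\lambda_i(\bm\Psi+\bar{\bf R})\partial_r W_i=\sum_j a_{ij}(t,r)W_j+b_i(t,r)\,\partial_r W_3 ,
\]
plus the difference term $(\lambda_i(\bm\Psi+\bar{\bf R})-\lambda_i({\bm\Psi}^{(T)}+\bar{\bf R}))\partial_r\Psi_i^{(T)}$, which is $O(\sigma)|\bm W|$ because $\|{\bm\Psi}^{(T)}\|_{C^1}\le\mc_1\sigma$. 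The coefficients $a_{ij}$ are the bounded background coefficients plus $O(\sigma)$. The coefficients $b_i$ are $O(\sigma)$ for $i=1,4$, coming from $(c-\bar c)$ and the factor $\frac{\Psi_4-\Psi_1}{4\gamma}+\dots$. The $\partial_r W_3$ term is again converted into a derivative along the $i$-th characteristic and moved to the left. This yields the quantity $\tilde W_i=W_i+(\text{bounded}\cdot\sigma)W_3$, which satisfies a transport equation whose right-hand side contains only $W$ itself (zeroth order), so everything closes in $C^0$. The boundary conditions become $W_{2,3,4}(t,r_0)=K_{2,3,4}W_1(t,r_0)$ and $W_1(t,r_1)=-W_4(t,r_1)$, since the $H_i$ cancel.

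Now let $m(t)=\max_i\|W_i(t,\cdot)\|_{C^0}$. We trace each point $(t,r)$, with $t\ge T_0$, backward along its $i$-th characteristic. Within time $T_0$ the characteristic reaches the boundary, because $|\mu_i|\le\mk$ and the choice of $T_0$ guarantees it. Along it we integrate the ODE: the accumulated factor is $\exp(\mk\,C(r_1-r_0))$, which is close to $1$ by \eqref{1-16-re}. For $W_2,W_3,W_4$ the trace at $r_0$ is $K_iW_1$. For $W_1$ the trace at $r_1$ is $-W_4(\cdot,r_1)$, which must in turn be traced back to $r_0$, where it equals $K_4W_1$ at an even earlier time. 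This reflection chain is the main obstacle: the outer boundary is non-dissipative, with coefficient $-1$, so contraction must come from the inner factors $|K_i|<1$ alone, together with a smallness margin. The first thing I would try is a two-step chain over time $2T_0$, or a $T_0$ that already accounts for a round trip. This gives
\[
\|W(t)\|\le \big(\max_i|K_i|+C\xi_0+C\sigma\big)e^{C\xi_0}\,\sup_{[t-2T_0,t]}m ,
\]
where $W_1$ picks up a factor $|K_4|$ after reflection. A weighted norm, for instance $\max\{|W_1|,\,\kappa^{-1}|W_{2,3,4}|\}$ with $\kappa$ between $\max|K_i|$ and $1$, balances the factors so that one step of length $T_0$ contracts. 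Choosing $\xi_0$ and $\sigma_2$ small makes the constant some $\varepsilon<1$.

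Finally we iterate over blocks: $m(t)\le\varepsilon^{[t/T_0]}\sup_{[0,T_0]}m$. The initial block is controlled by $\|\bm\Psi\|+\|{\bm\Psi}^{(T)}\|\le C\sigma$, and this gives \eqref{1-24} with a suitable $\mc_2$.
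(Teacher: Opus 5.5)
Your proposal is correct and follows essentially the same route as the paper. Both arguments run a block-by-block (length $T_0$) contraction along characteristics, using a weighted $C^0$ norm in which the ratio of the weights on $W_1$ and $W_4$ lies strictly between $K=\max|K_i|$ and $1$. This makes both the non-dissipative reflection at $r_1$ and the dissipative one at $r_0$ contract, while the narrow annulus and small $\sigma$ absorb the zeroth-order and quadratic terms. The paper puts weight $A_2\in(1,1/K)$ on the $\Psi_{1,2,3}$-differences and $1$ on the $\Psi_4$-difference, which matches your $\kappa=1/A_2$ except for how $W_2,W_3$ are grouped. The remaining differences are minor. The paper uses the integrating factors $F_1,F_2$ instead of a plain Gronwall bound for the diagonal terms, and it cites the global existence theory rather than re-deriving it. Your explicit substitution $\tilde W_i=W_i+O(\sigma)W_3$ for the entropy-derivative term makes precise a step that the paper's Section~3 leaves implicit.
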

The uniqueness of the time-periodic solution is a direct consequence from Theorem \ref{th2} by taking $t\to+\infty$.

\begin{corollary}\label{co1}(Uniqueness of the time-periodic solution) There exists a constant $\sigma_{3}\in(0,\sigma_{2})$, such that for any given $\sigma\in(0,\sigma_{3})$ and any given $C^{1}$ smooth functions $H_{i}(t)$ $(i=1,2,3,4)$ satisfying \eqref{1-16}, the corresponding time-periodic solution $\bm\Psi={\bm\Psi}^{(T)}(t,r)$ obtained in Theorem \ref{th1} is unique.
\end{corollary}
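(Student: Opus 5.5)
The uniqueness statement of Corollary \ref{co1} follows from the exponential-decay estimate \eqref{1-24} in Theorem \ref{th2}. Take $\sigma_3\in(0,\sigma_2)$ so small that any time-periodic solution considered lies in the regime of Theorems \ref{th1} and \ref{th2}. Concretely, we require that any $C^1$ time-periodic solution $\widetilde{\bm\Psi}$ of \eqref{1-13-mo-1}--\eqref{1-14} with the same data $H_i$ satisfies $\|\widetilde{\bm\Psi}\|_{C^1(D)}\le \mc_1\sigma$, with values in $\mathfrak V$. This is the class in which uniqueness is meant: the periodic solutions produced by Theorem \ref{th1}, which are small in $C^1$.

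Suppose $\widetilde{\bm\Psi}$ is a second such periodic solution. Regard $\widetilde{\bm\Psi}$ as the solution of the initial-boundary value problem \eqref{1-13-mo-1}--\eqref{1-14} with initial data $\widetilde{\bm\Psi}(0,\cdot)$.

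1. \emph{The data are admissible for Theorem \ref{th2}.} The initial data $\widetilde{\bm\Psi}(0,\cdot)$ are $C^1$ and small. Because they are the trace of a $C^1$ solution, they automatically satisfy the compatibility conditions at the corners $(0,r_0)$ and $(0,r_1)$.

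2. \emph{Apply Theorem \ref{th2}.} By the uniqueness of the global classical solution there, $\widetilde{\bm\Psi}$ coincides with the solution $\bm\Psi$ given by Theorem \ref{th2}. Hence
\begin{equation*}
\|\widetilde{\bm\Psi}(t,\cdot)-{\bm\Psi}^{(T)}(t,\cdot)\|_{C^0([r_0,r_1])}\le \mc_2\sigma\varepsilon^{[t/T_0]}\qquad\text{for all } t\ge0 .
\end{equation*}

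3. \emph{Use periodicity.} Both $\widetilde{\bm\Psi}$ and ${\bm\Psi}^{(T)}$ are $T$-periodic. Fix $(t,r)\in D$. For every $k\in\mathbb N$,
\begin{equation*}
|\widetilde{\bm\Psi}(t,r)-{\bm\Psi}^{(T)}(t,r)|=|\widetilde{\bm\Psi}(t+kT,r)-{\bm\Psi}^{(T)}(t+kT,r)|\le \mc_2\sigma\varepsilon^{[(t+kT)/T_0]}.
\end{equation*}
Since $\varepsilon\in(0,1)$, the right-hand side tends to $0$ as $k\to\infty$. Therefore $\widetilde{\bm\Psi}\equiv{\bm\Psi}^{(T)}$ on $D$.

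The only delicate point is step 1: $\widetilde{\bm\Psi}$ must genuinely satisfy the hypotheses of Theorem \ref{th2}, namely smallness within its range of validity and the compatibility conditions. The reason for passing from $\sigma_2$ to a possibly smaller $\sigma_3$ is to guarantee that every competing periodic solution in the stated class is small enough, relative to $\sigma_2$ and the neighborhood $\mathfrak V$, for Theorem \ref{th2} to apply. Once that is ensured, the argument is the limit in step 3.
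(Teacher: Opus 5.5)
Your proof is correct and follows the paper's route: the paper gets Corollary \ref{co1} in one line, as a direct consequence of the decay estimate \eqref{1-24} in Theorem \ref{th2} by letting $t\to+\infty$, and your periodicity step is exactly that limit. You also restrict the competitors to $C^1$-small periodic solutions with values in $\mathfrak V$ and note that their traces at $t=0$ automatically satisfy the compatibility conditions; these make explicit hypotheses the paper leaves implicit.
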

\begin{theorem}\label{th3} (Regularity of the time-periodic solution) If $(H_{1},H_{2},H_3,H_4)(t)$  satisfy \eqref{1-16} and possess further $W^{2,\infty}$ regularity with
\begin{equation}\label{1-25}
\max_{i=1,2,3,4}\|H^{\prime\prime}_{i}\|_{L^{\infty}(\mathbb{R}_{+})}\leq M_3<+\infty,
\end{equation}
then there exist constants $\mc_3>0$ and $\sigma_{4}\in(0,\sigma_{1})$, such that for any given $\sigma\in(0,\sigma_{4})$, the time-periodic solution $\bm\Psi={\bm\Psi}^{(T)}(t,r)$ provided by Theorem \ref{th1} is also a $W^{2,\infty}$ function with
\begin{equation}\label{1-26}
\max\bigg\{\|\partial_{t}^{2}{\bm\Psi}^{(T)}
\|_{L^{\infty}(D)},\|\partial_{t}\partial_{r}{\bm\Psi}^{(T)}
\|_{L^{\infty}(D)},\|\partial_{r}^{2}{\bm\Psi}^{(T)}\|_{L^{\infty}(D)}\bigg\}
\leq(1+\mk)^{2}\mc_3<+\infty,
\end{equation}
where $\mk$ is defined in \eqref{1-15}.
\end{theorem}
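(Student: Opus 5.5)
The statement to prove is Theorem \ref{th3}: second-order regularity of the time-periodic solution. I would not differentiate the equations directly, since $\partial_t\Psi$ and $\partial_r\Psi$ are only continuous. Instead I would prove a uniform Lipschitz bound on the first derivatives of ${\bm\Psi}^{(T)}$, which gives $W^{2,\infty}$ at once.

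\textbf{Reduction to $\partial_t$ derivatives.} By periodicity it suffices to work on $[0,T]$ extended by period. The system \eqref{1-13-mo-1} is strictly hyperbolic with $\lambda_i\neq 0$ by \eqref{1-8-ei}, so each equation can be solved for the radial derivative:
\[
\partial_r\Psi_i=\mu_i(\bm\Psi+\bar{\bf R})\bigl(F_i-\partial_t\Psi_i\bigr),
\]
where $F_i$ denotes the right-hand side. The terms in $F_1,F_4$ containing $\partial_t\Psi_3+\lambda_{1,4}\partial_r\Psi_3=\mp c\,\partial_r\Psi_3$ are rewritten using the third equation; this is the paper's device of expressing the entropy derivative along the first or fourth characteristic. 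Consequently, once $\partial_t\bm\Psi$ is shown Lipschitz in $(t,r)$, the bound \eqref{1-15} gives $\partial_r\bm\Psi$ Lipschitz with constants multiplied by $(1+\mk)$. Applying this twice, for $\partial_t\partial_r$ and $\partial_r^2$, produces the factor $(1+\mk)^2$ in \eqref{1-26}.

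\textbf{Equations for $W_i=\partial_t\Psi_i$.} I would work with difference quotients $W_i^h=(\Psi_i(t+h,r)-\Psi_i(t,r))/h$, or equivalently with the formally differentiated system, justified by mollifying the $H_i$ and passing to the limit. Differentiating \eqref{1-13-mo-1} in $t$ gives a diagonal linear system
\[
\partial_tW_i+\lambda_i\partial_rW_i=\sum_j a_{ij}W_j-\partial_t\lambda_i\,\partial_r\Psi_i+\cdots,
\]
where the coefficients are controlled by the $C^1$ bound \eqref{1-22}, and the nonlinear cross terms are $O(\sigma)$ times second derivatives. The boundary conditions become $\partial_tW_{2,3,4}(t,r_0)=H_{2,3,4}''+K_{2,3,4}\,\partial_tW_1$ and $\partial_tW_1(t,r_1)=H_1''-\partial_tW_4$, bounded using \eqref{1-25}.

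\textbf{Main obstacle: a contraction estimate for the second derivatives along characteristics.} I would mimic the $C^1$ argument behind Theorem \ref{th1}. Integrate along each characteristic over one crossing time and use the boundary reflections, where the coefficients $|K_i|<1$ from \eqref{1-12-ds} and the outer coefficient are dissipative on the inner side. The zeroth-order terms from the nonzero angular velocity, such as $2\bar R_2\Psi_2/r$, accumulate a factor $e^{C(r_1-r_0)\mk}$. The width restriction \eqref{1-16-re} makes this factor small enough that the reflection chain $1\to4\to1$ contracts, with amplification at most $\max|K_i|e^{C\xi_0}<1$.

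The terms in $\partial_r^2\Psi_3$ are handled by the same characteristic rewriting: $\partial_t\Psi_3$ is transported exactly along $\lambda_3$, so it has no derivative loss. Its second derivative is bounded directly from the boundary data and $W_1(t,r_0)$.

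Taking the supremum over a period then gives $\|\partial_t\bm W\|\le \theta\|\partial_t\bm W\|+C(M_3+\sigma)$ with $\theta<1$, for $\sigma<\sigma_4$ small. This needs an a priori finiteness, which the mollified problem supplies; passing to the limit yields $\mc_3$, and the reduction step above completes \eqref{1-26}.
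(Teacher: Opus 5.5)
\textbf{The gap: the entropy coupling at second order.} The step that fails is your treatment of the entropy coupling once you pass to second derivatives.

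The paper's device runs in the opposite direction to the one you describe. It replaces the source term $c\,\partial_r\Psi_3$ of the $R_1$, $R_4$ equations by $\mp(\partial_t+\lambda_{1,4}\partial_r)\Psi_3$. That is an exact derivative along the first or fourth characteristic, chosen precisely so that it can be integrated along that characteristic.

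You undo this and keep the source in the form $a\,c\,\partial_r\Psi_3$, with $a=\frac{\Psi_4-\Psi_1}{4\gamma}+\frac{\bar c\Psi_3}{2\gamma}=O(\sigma)$. This is harmless in your reduction step. But your main step integrates the equation for $\partial_t\bm W=\partial_t^2\bm\Psi$ along the first and fourth characteristics. Differentiating $a\,c\,\partial_r\Psi_3$ twice in $t$ produces $a\,c\,\partial_r\partial_t^2\Psi_3=-a\,c\,\mu_3\,\partial_t^3\Psi_3+(\text{terms with at most second derivatives})$.

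This is a third derivative of $\Psi_3$, transversal to the characteristic you integrate along. Transport along $\lambda_3$ only bounds it through $H_3'''$ and $\partial_t^3\Psi_1(\cdot,r_0)$, and neither is available under \eqref{1-25}. Your remark that second derivatives of $\Psi_3$ are bounded from the boundary data concerns the once-differentiated system, not the one you actually integrate.

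So your inequality $\|\partial_t\bm W\|\le\theta\|\partial_t\bm W\|+C(M_3+\sigma)$ really carries an extra term $C\sigma\|\partial_t^3\Psi_3\|$. The smallness of $a$ does not rescue this. Mollification or difference quotients only make that term finite, with a bound that degenerates as the regularization is removed.

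\textbf{The missing idea: keep the characteristic form.} After differentiating \eqref{3-2} twice in $t$, the dangerous term is $a\,(\partial_r+\mu_4\partial_t)\partial_t^2\Psi_3=a\,\frac{\mathrm{d}}{\mathrm{d}s}\bigl[\partial_t^2\Psi_3(t_4(s;t,r),s)\bigr]$, plus $O(\sigma)$ times second derivatives. Integrate by parts along the fourth characteristic exactly as in \eqref{3-5-t-1-1}. This leaves endpoint terms $a\,\partial_t^2\Psi_3$ and an integral of $\partial_t^2\Psi_3$ against $\frac{\mathrm{d}a}{\mathrm{d}s}$, all of size $O(\sigma)\|\partial_t^2\Psi_3\|_{C^0}$. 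The $\lambda_3$-transport then controls $\|\partial_t^2\Psi_3\|_{C^0}$ by $M_3+K\|\partial_t^2\Psi_1\|_{C^0}$ up to $O(\sigma)$ terms. The same works along the first characteristic.

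With that repair, the rest of your mechanism is the one used in Proposition \ref{p4}. That mechanism consists of the reduction to $\partial_t$-derivatives at the cost of factors $\mk$, reflection with $K<1$ at $r_0$, and the width restriction. For the latter, the $\xi_0$ of Theorem \ref{th1} already suffices, because the background coefficients of the differentiated system are unchanged.

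\textbf{A secondary point: a priori finiteness.} The paper sidesteps this issue by proving the $W^{2,\infty}$ bound inductively for the iterates $\bm\Psi^{(k)}$, which solve linear problems. It then passes to the limit by weak-$*$ compactness together with the $C^1$ convergence. Your mollification route would additionally need a $C^2$ periodic solution for the smoothed data, which Theorem \ref{th1} does not supply. It would also need an identification of that limit with $\bm\Psi^{(T)}$.
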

\begin{theorem}\label{th4} ($C^{1}$ Stability of the time-periodic solution) Assume that \eqref{1-25} holds, then there exist constants $\mc_4>\mc_2$ and $\sigma_{5}\in(0,\min\{\sigma_{2},\sigma_{4}\})$, such that for any given $\sigma\in(0,\sigma_{5})$, we have not only the $C^{0}$ convergence result  in Theorem \ref{th2}, but also the $C^{1}$ exponential convergence as
\begin{equation}\label{1-26-es}
\max\bigg\{\|\partial_{t}\bm\Psi(t,\cdot)-\partial_{t}
{\bm\Psi}^{(T)}(t,\cdot)\|_{C^{0}([r_0,r_1])},\|\partial_{r}\bm\Psi(t,\cdot)-\partial_{r}
{\bm\Psi}^{(T)}(t,\cdot)\|_{C^{0}([r_0,r_1])}\bigg\}
\leq(1+\mk)
\mc_4\sigma\varepsilon^{[t/T_{0}]},\ \forall t\geq 0.
\end{equation}
 \end{theorem}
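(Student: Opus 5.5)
The $C^1$ stability in Theorem \ref{th4} will be reduced to a $C^0$ stability statement for the derivative system, argued in the same way as Theorem \ref{th2} once the uniform $W^{2,\infty}$ bound of Theorem \ref{th3} is available. Set $\bm\Psi^{\ast}=\bm\Psi-\bm\Psi^{(T)}$, $\bm w=\partial_t\bm\Psi$ and $\bm w^{(T)}=\partial_t\bm\Psi^{(T)}$.

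The first step is to derive the equations for $\bm w$. Differentiating \eqref{1-13-mo-1} in $t$ gives a diagonal linear system
\[
\partial_t w_i+\lambda_i(\bm\Psi+\bar{\bf R})\,\partial_r w_i=\sum_j a_{ij}w_j+\text{(terms in } \partial_t\lambda_i\,\partial_r\Psi_i,\ \partial_t\Psi_3\text{ etc.)},
\]
where $\partial_r\Psi_i$ is replaced by $\mu_i(\partial_t\Psi_i-\text{RHS}_i)$ using the equation itself. The boundary conditions \eqref{1-14} differentiated in $t$ give $w_k(t,r_0)=H_k'+K_kw_1(t,r_0)$ for $k=2,3,4$ and $w_1(t,r_1)=H_1'-w_4(t,r_1)$. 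The $\partial_r\Psi_3$ terms in the $\Psi_1,\Psi_4$ equations carry a second derivative of $\Psi_3$ after differentiation. As in the key idea of the paper, I will write them as derivatives along the first or fourth characteristic, $\partial_t\Psi_3+\lambda_{1}\partial_r\Psi_3$ (respectively $\lambda_4$). This lets me bring that term to the left side and work with the modified unknowns $\tilde w_1=w_1+\big(\frac{\Psi_4-\Psi_1}{4\gamma}+\frac{(\gamma-1)\mathfrak c\Psi_3}{2}\big)w_3$ (and analogously $\tilde w_4$), which avoids derivative loss.

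The second step is to subtract the same system written for $\bm w^{(T)}$. The difference $\bm w-\bm w^{(T)}$ satisfies a linear diagonal system with a coefficient matrix bounded in terms of the background and $\sigma$. The inhomogeneous terms are controlled by $|\bm\Psi^\ast|$ times second derivatives of $\bm\Psi^{(T)}$, and these are bounded by $(1+\mk)^2\mc_3$ through Theorem \ref{th3}. It also contains the difference $\lambda_i(\bm\Psi+\bar{\bf R})-\lambda_i(\bm\Psi^{(T)}+\bar{\bf R})$ multiplied by $\partial_r\bm w^{(T)}$, which Theorem \ref{th3} bounds again. The boundary data for this difference are homogeneous: $w^\ast_k=K_kw^\ast_1$ at $r_0$ and $w_1^\ast=-w^\ast_4$ at $r_1$.

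The third step is to integrate along characteristics over one period $T_0$ of traversal. Every backward characteristic from $(t,r)$ reaches the boundary within time $T_0$. Using $|K_i|<1$ together with the bound $\|\bm\Psi^\ast(t)\|\le\mc_2\sigma\varepsilon^{[t/T_0]}$ from Theorem \ref{th2}, I expect to obtain
\[
\|\bm w^\ast(t)\|_{C^0}\le \theta\,\sup_{s\in[t-2T_0,t]}\|\bm w^\ast(s)\|_{C^0}+C\sigma\varepsilon^{[t/T_0]},
\]
with $\theta<1$. Here $\theta$ comes from the boundary reflection factors $\max|K_i|$, multiplied by $e^{C(r_1-r_0)}$ to account for the accumulation of the zeroth-order angular-velocity terms along characteristics. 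Making $\theta<1$ is exactly where the restriction \eqref{1-16-re} on the width is used. Induction in $[t/T_0]$, after enlarging $\varepsilon$ if necessary but keeping it in $(0,1)$, yields the exponential decay of $\partial_t\bm\Psi-\partial_t\bm\Psi^{(T)}$. The spatial derivative then follows from the equations: $\partial_r\Psi_i=\mu_i(\text{RHS}_i-\partial_t\Psi_i)$, and subtracting the two versions produces the factor $(1+\mk)$ in \eqref{1-26-es}.

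The main obstacle is the reflection estimate with the entropy coupling. The factor $\frac{(c-\bar c)c}{\gamma(\gamma-1)}\partial_r\Psi_3$, once differentiated, must stay small and must not spoil the contraction. I plan to handle it by the characteristic rewriting above, together with smallness of $\sigma$ (choosing $\sigma_5$ small), so that the extra contributions are $O(\sigma)$ and can be absorbed into $\theta$.
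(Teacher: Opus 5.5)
Your proposal is correct and follows essentially the paper's route. You time-differentiate the difference system \eqref{5-6-1}--\eqref{5-6-4} and integrate along characteristics using the homogeneous reflection conditions with $|K_i|<1$. You control the forcing by the $C^0$ decay of Theorem \ref{th2} times the $W^{2,\infty}$ bound of Theorem \ref{th3}, and absorb the accumulated coupling through the width restriction; your modified unknown $\tilde w_1$ is just the differential form of the paper's integration by parts along the characteristic in \eqref{3-5-t-1-1}. Finally you recover $\partial_r$ from the equations, which gives the factor $(1+\mk)$.

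The only difference is bookkeeping. The paper closes the induction on a single interval of length $T_0$ with a weighted norm: weight $1$ on the $\Psi_4$-component, $A_3>1$ on the others, together with the weights $F_1,F_2$. This lets it keep the same $\varepsilon$ as in Theorem \ref{th2}. Your unweighted norm instead needs the $2T_0$ window to pass through the non-dissipative outer boundary, and possibly a larger $\varepsilon$. That is harmless, since the $C^0$ bound of Theorem \ref{th2} remains true with a larger $\varepsilon$.
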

 \begin{remark}\label{re-1}
 {\it The boundary condition \eqref{1-12} on the outer circle is non-dissipative, which has physical significance. Indeed, it simplifies to
$$ \bar U_1(t,r_1)=\frac12\bigg(G_{1}(t)+\x R_{4}(r_1)\bigg).$$
 which represents a physically admissible boundary condition. }\end{remark}
 \begin{remark}\label{re-2}
 {\it The main contribution of this paper is to establish the existence and stability of time-periodic solutions without imposing any smallness assumptions on the steady radially symmetric subsonic spiral flows. This is in contrast with several recent works \cite{FMY24,QYZ23-1,ZY25}, which require both a smallness assumption on the background flow and the restriction $1<\gamma<3$. The reason lies in the fundamentally different structures of the background flows. For non-constant background flows, the linearized system typically takes the form of a hyperbolic balance law with  variable coefficients, which may generate growth effects and weaken the dissipative structure. As shown in \cite{QYZ23-1,ZY25}, the derivative of the background solution is directly related to the damping coefficient and gives rise to potentially amplifying terms in the perturbation system. To ensure that these growth effects are dominated by the boundary dissipation, a smallness assumption on the background flow is therefore indispensable. In contrast,
for spiral flows,  the background solution and its derivatives are completely determined by the radial geometry and the initial data. Consequently, the variable coefficients arising from the background state remain uniformly bounded and do not produce uncontrollable growth. This intrinsic stabilising mechanism,  together with a suitable restriction on the width of the annulus, enables us to control the accumulation of perturbations along characteristic curves and thereby remove the smallness assumption entirely.
To the best of our knowledge, this is the first result on subsonic time-periodic solutions for compressible Euler flows with nonzero angular velocity in an annulus that does not require any smallness condition on the background flow.
}
 \end{remark}
\begin{remark}\label{re-3}
 {\it We hope
that the results and the analysis in this paper would  inspire the  research on
time-periodic transonic shock solutions to the radially symmetric spiral  flows   in an annulus. This is also one of the principal
goals in the forthcoming work  and  will be reported in \cite{YZ26}.}\end{remark}
\subsection{Key ideas for the proof of main results}\label{pr}\noindent
\par Our approach is based on the characteristic decomposition  and an  elaborate iterative scheme.  The main difficulties arise from the lack of dissipation on the outer boundary, the derivative loss caused by the coupling
with the radial entropy derivative and the additional zeroth-order terms induced by the nonzero angular velocity.
To overcome  these difficulties,
the iteration scheme is constructed according to the characteristic directions. Specifically, $R_1$ is solved  by integrating along the backward characteristics, while $R_2$, $R_3$ and $R_4$ are propagated from the inner boundary, where dissipative boundary conditions are imposed. Furthermore, Suitable integrating factors are introduced to define weight functions, which enable us to control the accumulation of perturbations and incorporate the boundary dissipation into a priori estimates. The energy estimates are then established by the monotonicity of these weight functions, the smallness of the boundary data, and a suitable restriction on the annulus width to prevent excessive perturbation growth along characteristics. A key observation is that the derivatives of the background solution remain uniformly bounded by the conservation laws, without any smallness assumption on the background flow.
Once the time-periodic solution is constructed, its stability follows from a contraction argument over successive time intervals, which yields exponential decay. Higher regularity is obtained by a compactness argument when the boundary data possess additional regularity.
\par This paper will be arranged as follows.  In Section 2, we establish the existence of time-periodic solutions by constructing an iterative scheme. The stability and uniqueness of the time-periodic solution are proved in Section 3. Section 4 and Section 5 are devoted to the higher regularity and stability of the time-periodic solutions under the assumption that the boundary data possess higher regularity.
\section{Existence of the time-periodic solutions}\noindent
\par Firstly, by multiplying $$\mu_{i}(\bm\Psi+\bar{\bf R})=\lambda_{i}^{-1}(\bm\Psi+\bar{\bf R})$$ on both sides of \eqref{1-13-mo-1}, interchanging  the positions of $t$ and $r$  and using $$\mathfrak {c}'=\frac1{2\gamma}\frac{\bar c(\bar U_1^2+\bar U_2^2)}{r(\bar c^2-\bar U_1^2)}, $$  one has
\be\no
	&&\partial_r \Psi_1+\mu_{1}(\bm\Psi+\bar{\bf R})\partial_t \Psi_1
= \mu_{1}(\bm\Psi+\bar{\bf R})\bigg(\bigg( -\frac{\gamma+1}{4}\bar R_1' - \frac{\gamma-1}{4r}\bar R_1 \bigg)\Psi_1 + \frac{2\bar R_2}{r} \Psi_2  \\\no
&&\quad+ \bigg( -\frac{3-\gamma}{4}\bar R_1' + \frac{\gamma-1}{4r}\bar R_4 \bigg) \Psi_4  + \frac{\Psi_2^2}{r} + \frac{\gamma-1}{8r}(\Psi_4^2 - \Psi_1^2)+\frac{(\gamma-1)\mathfrak  c}{4r }\Psi_3(\Psi_1+ \Psi_4)\bigg) \\\no
&&\quad +\mu_1(\bm\Psi+\bar{\bf R})\bigg(\frac{\gamma-1}{2}\bar R_1' + \frac{\gamma-1}{4r}(\bar R_1+\bar R_4)\bigg)\mathfrak  c\Psi_3+\mathfrak {c}'(r)\Psi_3 \\\no
  &&\quad -\bigg(\frac{{\Psi}_4 - {\Psi}_1}{4\gamma} + \frac{(\gamma-1)\mathfrak {c}\Psi_3}{2}\bigg)\bigg(  \partial_r \Psi_3+\mu_1(\bm\Psi+\bar{\bf R})\partial_t \Psi_3\bigg)\\\no
&&=-\frac{ (\gamma+1)\bar R_2^2 + \dfrac{\gamma-1}{8}\bigg(2(\bar R_1+\bar R_4)^2 + (\gamma-1)(\bar R_4-\bar R_1)^2\bigg) }{ r\big( (\gamma+1)\bar R_1 + (3-\gamma)\bar R_4 \big) }\mu_1(\bm\Psi+\bar{\bf R})\Psi_1\\\no
&&\quad +\frac{ (\gamma-3)\bar R_2^2 + \dfrac{\gamma-1}{8}\bigg(2(\bar R_1+\bar R_4)^2 - (\gamma-1)(\bar R_4-\bar R_1)^2\bigg) }{ r\big( (\gamma+1)\bar R_1 + (3-\gamma)\bar R_4 \big) }\mu_1(\bm\Psi+\bar{\bf R})\Psi_4\\\no
&&\quad+\bigg(\frac{2\bar U_2}{r} \Psi_2+\frac{\Psi_2^2}{r}+ \frac{\gamma-1}{8r}(\Psi_4^2 - \Psi_1^2)+\frac{\bar  c}{4\gamma r }\Psi_3(\Psi_1+ \Psi_4)\bigg)\mu_{1}(\bm\Psi+\bar{\bf R})\\\no
&&\quad +\mu_1(\bm\Psi+\bar{\bf R})\frac{\bar c}{2\gamma}\frac{\bar U_1^2+\bar U_2^2}{r(\bar c-\bar U_1)}\Psi_3+\frac{\bar c}{2\gamma}\frac{\bar U_1^2+\bar U_2^2}{r(\bar c^2-\bar U_1^2)}\Psi_3, \\\no
  &&\quad -\bigg(\frac{{\Psi}_4 - {\Psi}_1}{4\gamma} + \frac{\bar{c}\Psi_3}{2\gamma}\bigg)\bigg(  \partial_r \Psi_3+\mu_1(\bm\Psi+\bar{\bf R})\partial_t \Psi_3\bigg),\\\no
&&=\frac1r\bigg(-(\ma_{11}+\ma_{12})(r)\Psi_1+(\ma_{13}-\ma_{11}-\ma_{12})(r)\Psi_4
+\ma_{14}(r)\Psi_2
+\ma_{15}(r)\Psi_3\bigg)\\\no
&&\quad+\bigg(\mu_1(\bm\Psi+\bar{\bf R})-\mu_1(\bar{\bf R})\bigg)\bigg(-\mm_{11}(r)\Psi_1+\mm_{12}(r)\Psi_2+\mm_{13}(r)\Psi_3+\mm_{14}(r)\Psi_4\bigg)\\\no
&&\quad+\mu_1(\bm\Psi+\bar{\bf R})\bigg(\frac{\Psi_2^2}{r}+\frac{\gamma-1}{8r}(\Psi_4^2 - \Psi_1^2)+\frac{\bar c}{4\gamma r }\Psi_3(\Psi_1+ \Psi_4)\bigg)\\\label{3-1}
&&\quad -\bigg(\frac{{\Psi}_4 - {\Psi}_1}{4\gamma} + \frac{\bar{c}\Psi_3}{2\gamma}\bigg)\bigg(  \partial_r \Psi_3+\mu_1(\bm\Psi+\bar{\bf R})\partial_t \Psi_3\bigg),\\\no
&&\partial_r \Psi_4 + \mu_4(\bm\Psi+\bar{\bf R}) \partial_t \Psi_4
=
\mu_4(\bm\Psi+\bar{\bf R}) \bigg(\bigg( -\frac{3-\gamma}{4} \bar R_4' + \frac{\gamma-1}{4r}\bar R_1 \bigg) \Psi_1 +  \frac{2\bar R_2}{r} \Psi_2 \\\no
&&\quad +\bigg( -\frac{\gamma+1}{4} \bar R_4' - \frac{\gamma-1}{4r}\bar R_4 \bigg) \Psi_4  + \frac{\Psi_2^2}{r} - \frac{\gamma-1}{8r}(\Psi_4^2 - \Psi_1^2)
-\frac{(\gamma-1)\mathfrak {c} }{4r }\Psi_3(\Psi_1+ \Psi_4)\bigg)\\\no
 &&\quad -\mu_4(\bm\Psi+\bar{\bf R})\bigg(\frac{\gamma-1}{2}\bar R_4' + \frac{\gamma-1}{4r}(\bar R_1+\bar R_4)\bigg)\mathfrak {c}\Psi_3-\mathfrak {c}'\Psi_3  \\\no
&&\quad +\bigg(\frac{{\Psi}_4 - {\Psi}_1}{4\gamma} + \frac{(\gamma-1)\mathfrak{c}\Psi_3}{2}\bigg)\bigg(\partial_r \Psi_3 + \mu_4(\bm\Psi+\bar{\bf R}) \partial_t \Psi_3\bigg)\\\no \no \ee \be\no
&&=-\frac{ (\gamma+1)\bar R_2^2 + \dfrac{\gamma-1}{8}\bigg(2(\bar R_1+\bar R_4)^2 + (\gamma-1)(\bar R_4-\bar R_1)^2\bigg) }{ r\big( (\gamma+1)\bar R_4 + (3-\gamma)\bar R_1 \big) }\mu_4(\bm\Psi+\bar{\bf R})\Psi_4\\\no
&&\quad +\frac{ (\gamma-3)\bar R_2^2 + \dfrac{\gamma-1}{8}\bigg(2(\bar R_1+\bar R_4)^2 - (\gamma-1)(\bar R_4-\bar R_1)^2\bigg) }{ r\big( (\gamma+1)\bar R_4+ (3-\gamma)\bar R_1 \big) }\mu_4(\bm\Psi+\bar{\bf R})\Psi_1\\\no
&&\quad+\bigg(\frac{2\bar U_2}{r} \Psi_2+\frac{\Psi_2^2}{r}- \frac{\gamma-1}{8r}(\Psi_4^2 - \Psi_1^2)
-\frac{\bar{c} }{4\gamma r }\Psi_3(\Psi_1+ \Psi_4)\bigg)\mu_{4}(\bm\Psi+\bar{\bf R})\\\no
&&\quad -\mu_4(\bm\Psi+\bar{\bf R})\frac{\bar c}{2\gamma}\frac{\bar U_1^2+\bar U_2^2}{r(\bar c+\bar U_1)}\Psi_3-\frac{\bar c}{2\gamma}\frac{\bar U_1^2+\bar U_2^2}{r(\bar c^2-\bar U_1^2)}\Psi_3 \\\no
&&\quad +\bigg(\frac{{\Psi}_4 - {\Psi}_1}{4\gamma} + \frac{\bar{c}\Psi_3}{2\gamma}\bigg)\bigg(\partial_r \Psi_3 + \mu_4(\bm\Psi+\bar{\bf R}) \partial_t \Psi_3\bigg)\\\no
&&=\frac1r\bigg(-\ma_{41}(r)\Psi_4+(\ma_{42}-\ma_{41})(r)\Psi_1 +\ma_{43}(r) \Psi_2-\ma_{44}(r)\Psi_3\bigg)\\\no
&&\quad +\bigg(\mu_4(\bm\Psi+\bar{\bf R})-\mu_4(\bar{\bf R})\bigg)\bigg(\mm_{41}(r)\Psi_1-\mm_{44}(r)\Psi_4+\mm_{42}(r)
 \Psi_2-\mm_{43}(r)\Psi_3\bigg)\\\no
&&\quad+\mu_4( \bm\Psi+\bar{\bf R})\bigg(\frac{\Psi_2^2}{r} - \frac{\gamma-1}{8r}(\Psi_4^2 - \Psi_1^2)
-\frac{\bar {c} }{4\gamma r }\Psi_3(\Psi_1+ \Psi_4)\bigg)\\\label{3-2}
&&\quad +\bigg(\frac{{\Psi}_4 - {\Psi}_1}{4\gamma} + \frac{\bar{c}\Psi_3}{2\gamma}\bigg)\bigg(\partial_r \Psi_3 + \mu_4( \bm\Psi+\bar{\bf R}) \partial_t \Psi_3\bigg),\\ \label{3-3}
&& \partial_r (r\Psi_2)+  \mu_2( \bm\Psi+\bar{\bf R})\partial_t (r\Psi_2)
= 0,\\\label{3-4}
&& \partial_r \Psi_3+  \mu_3( \bm\Psi+\bar{\bf R})\partial_t \Psi_3
= 0,
\ee
 where
 \begin{gather*}
\ma_{11}(r)=\frac{ (\gamma+1)\bar U_2^2 +(\gamma-1)\bar U_1^2+2\bar c^2}{ 4(\bar U_1-\bar c) (\bar U_1+\bar c)}, \ \ \ma_{12}(r)=\frac{2\bar c\bigg( (\gamma+1)\bar U_2^2 +(\gamma-1)\bar U_1^2+2\bar c^2\bigg)}{ 4(\bar U_1-\bar c)^2(\bar U_1+\bar c) },\\ \ma_{13}(r)=\frac{ 2(\gamma-1)(\bar U_1^2+\bar U_2^2)}{ 4(\bar U_1-\bar c)^2 },\ \ma_{14}(r)=\frac{2\bar U_2}{\bar U_1-\bar c}, \\ \ma_{15}(r)=\frac{\bar c^2(\bar U_1^2+\bar U_2^2)}{\gamma (\bar c+\bar U_1)(\bar c-\bar U_1)^2},\ \
 \ma_{41}(r)=\frac{ (\gamma+1)\bar U_2^2 +(\gamma-1)\bar U_1^2+2\bar c^2}{ 4(\bar U_1+\bar c)^2 },\\ \ma_{42}(r)=\frac{ 2(\gamma-1)(\bar U_1^2+\bar U_2^2)}{ 4(\bar U_1+\bar c)^2 },\ \
\ma_{43}(r)=\frac{2\bar U_2}{\bar U_1+\bar c},\\ \ma_{44}(r)=\frac{\bar c^2(\bar U_1^2+\bar U_2^2)}{\gamma (\bar c+\bar U_1)^2(\bar c-\bar U_1)},\ \  \
\mm_{11}(r)=\frac{ (\gamma+1)\bar U_2^2 +(\gamma-1)\bar U_1^2+2\bar c^2}{ 4r(\bar U_1-\bar c) },\\ \mm_{12}(r)=\mm_{42}(r)=\frac{2\bar U_2}{r}, \ \ \
\mm_{13}(r)=\frac{\bar c}{2\gamma}\frac{\bar U_1^2+\bar U_2^2}{r(\bar c-\bar U_1)}, \\ \mm_{14}(r)=\frac{ (\gamma-3)\bar U_2^2+(\gamma-1)\bar U_1^2 -2\bar c^2}{ 4r(\bar U_1-\bar c)  },\ \ \mm_{41}(r)=\frac{ (\gamma-3)\bar U_2^2+(\gamma-1)\bar U_1^2 -2\bar c^2}{ 4r(\bar U_1+\bar c)  },\\
\mm_{43}(r)=\frac{\bar c}{2\gamma}\frac{\bar U_1^2+\bar U_2^2}{r(\bar c+\bar U_1)},\ \ \mm_{44}(r)=\frac{ (\gamma+1)\bar U_2^2 +(\gamma-1)\bar U_1^2+2\bar c^2}{ 4r(\bar U_1+\bar c) }.
 \end{gather*}
\par Next, we consider the following initial-value problem of the linearized system:\begin{equation}\label{3-6}
\begin{cases}
\begin{aligned}
	&\partial_r \Psi_4^{(k)}+\mu_4(\bm\Psi^{(k-1)}+\bar{\bf R})\partial_t \Psi_4^{(k)}+\frac{\ma_{41}(r)}r\Psi_4^{(k)}\\
&=\frac1r\bigg(
(\ma_{42}-\ma_{41})(r)\Psi_1^{(k-1)} +\ma_{43}(r) \Psi_2^{(k-1)}
 -\ma_{44}(r)\Psi_3^{(k-1)}\bigg)\\
 &\ \ +\bigg(\mu_{4}( \bm\Psi^{(k-1)}+\bar{\bf R}) -\mu_4(\bar{\bf R})\bigg)\bigg(\mm_{41}(r)\Psi_1^{(k-1)}-\mm_{44}(r)\Psi_4^{(k-1)}+\mm_{42}(r)
 \Psi_2^{(k-1)} \\
&\quad -\mm_{43}(r)\Psi_3^{(k-1)}\bigg)+\mu_4( \bm\Psi^{(k-1)}+\bar{\bf R})\bigg(\frac{(\Psi_2^{(k-1)})^2}{r} - \frac{\gamma-1}{8r}\bigg((\Psi_4^{(k-1)})^2 - (\Psi_1^{(k-1)})^2\bigg) \\
&\quad -\frac{\bar {c} }{4\gamma r }\Psi_3^{(k-1)}(\Psi_1^{(k-1)}+ \Psi_4^{(k-1)})\bigg)+\bigg(\frac{{\Psi}_4^{(k-1)} - {\Psi}_1^{(k-1)}}{4\gamma}+ \frac{\bar{c}\Psi_3^{(k-1)}}{2\gamma}\bigg)\bigg(\partial_r \Psi_3^{(k-1)} \\
&\quad + \mu_4( \bm\Psi^{(k-1)}+\bar{\bf R}) \partial_t \Psi_3^{(k-1)}\bigg),\\
&r=r_0: \Psi_{4}^{(k)}(t,r_0)=  \mh_4 (t)+K_4 {\Psi}_1^{(k-1)}(t,r_0) ,
\end{aligned}
\end{cases}
\end{equation}
\begin{equation}\label{3-5}
\begin{cases}
\begin{aligned}
	&\partial_r \Psi_1^{(k)}+\mu_{1}( \bm\Psi^{(k-1)}+\bar{\bf R})\partial_t \Psi_1^{(k)}+\frac{\ma_{11}(r)}r\Psi_1^{(k)}\\
&=\frac1r\bigg(-\ma_{12}(r)\Psi_1^{(k-1)}+(\ma_{13}-\ma_{11}-\ma_{12})(r)\Psi_4^{(k)}
+\ma_{14}(r)\Psi_2^{(k-1)}
+\ma_{15}(r)\Psi_3^{(k-1)}\bigg) \\
&\  +\bigg(\mu_1(\bm\Psi^{(k-1)}+\bar{\bf R})-\mu_1(\bar{\bf R})\bigg)\bigg(-\mm_{11}(r)\Psi_1^{(k-1)}+\mm_{12}(r)\Psi_2^{(k-1)}+ \mm_{13}(r)\Psi_3^{(k-1)}
\\
 &\quad  +\mm_{14}(r)\Psi_4^{(k)}\bigg) +\mu_1( \bm\Psi^{(k-1)}+\bar{\bf R})\bigg(\frac{(\Psi_2^{(k-1)})^2}{r}+\frac{\gamma-1}{8r}\bigg((\Psi_4^{(k)})^2  - (\Psi_1^{(k-1)})^2\bigg)\\
&\quad  +\frac{\bar  c}{4\gamma r }\Psi_3^{(k-1)}(\Psi_1^{(k-1)}+ \Psi_4^{(k)})\bigg)-\bigg(\frac{{\Psi}_4^{(k)} - {\Psi}_1^{(k-1)}}{4\gamma} + \frac{\bar c\Psi_3^{(k-1)}}{2\gamma}\bigg)\bigg(  \partial_r \Psi_3^{(k-1)}\\
&\quad+\mu_1( \bm\Psi^{(k-1)}+\bar{\bf R})\partial_t \Psi_3^{(k-1)}\bigg),\\
&r=r_1: \Psi_{1}^{(k)}(t,r_1)=\mh_{1}(t)-\Psi_{4}^{(k)}(t,r_1),
\end{aligned}
\end{cases}
\end{equation}
\begin{equation}\label{3-7}
\begin{cases}
\begin{aligned}
 &\partial_r (r\Psi_2^{(k)})+  \mu_2( \bm\Psi^{(k-1)}+\bar{\bf R})\partial_t (r\Psi_2^{(k)})
= 0,\\
&r=r_0:\Psi_{2}^{(k)}(t,r_0)=  \mh_2(t)+K_2{\Psi}_1^{(k-1)}(t,r_0) ,\\
\end{aligned}
\end{cases}
\end{equation}
and
\begin{equation}\label{3-8}
\begin{cases}
\begin{aligned}
 &\partial_r \Psi_3^{(k)}+  \mu_3(\bm\Psi^{(k-1)}+\bar{\bf R})\partial_t \Psi_3^{(k)}
= 0,\\
&r=r_0:\Psi_{3}^{(k)}(t,r_0)=\mh_3(t)+K_3{\Psi}_1^{(k-1)}(t,r_0) ,\\
\end{aligned}
\end{cases}
\end{equation}
where
\begin{align*}
\mh_i(t)=
\begin{cases}
H_{i}(t),\quad t\geq0,\\
\tilde H_{1}(t),\quad t<0,
\end{cases}
\end{align*}
are the time-periodic extensions of $H_i(t)$ $(i=1,2,3,4)$.
\par Next, we start the iteration \eqref{3-6}-\eqref{3-8}  from
\begin{equation}\label{3-9}
\Psi_i^{(0)}(t,r)=0, \ \ i=1,2,3,4.
\end{equation}
\begin{proposition}\label{pr1}
There exist a small  constant $\sigma_1 > 0$, positive constants $M_1, M_2 $ and $\eta \in (0,1)$, such that for any given $\sigma \in (0, \sigma_1)$ and $k \in \mathbb{N}^+$, the following  hold:
\begin{equation}\label{3-10}
{\Psi}_i^{(k)}(t+T,r)={\Psi}_i^{(k)}(t,r),\ \ \forall(t,r)\in \mathbb{R}\times[r_0,r_1], \ \  i=1,2,3,4,
\end{equation}
\begin{equation}\label{3-11}
\mathop{\max}\limits_{i=1,2,3,4}\Big\{\|\Psi_{i}^{(k)}\|_{C^{0}(D)}, \|\p_t\Psi_{i}^{(k)}\|_{C^{0}(D)}\Big\}\leq M_1\sigma,\ \ \mathop{\max}\limits_{i=1,2,3,4} \|\p_r\Psi_{i}^{(k)}\|_{C^{0}(D)}\leq M_2\sigma,
\end{equation}
\begin{equation}\label{3-15}
\mathop{\max}\limits_{i=1,2,3,4} \|{\Psi}_i^{(k)}-{\Psi}_i^{(k-1)}\|_{C^{0}(D)}\leq M_1\sigma \eta^k,
\end{equation}
\begin{equation}\label{3-16}
\mathop{\max}\limits_{i=1,2,3,4}
\bigg\{\varpi\bigg(\delta|\partial_{t}\Psi_{i}^{(k)}\bigg)+
\varpi\bigg(\delta|\partial_{r}\Psi_{i}^{(k)}\bigg)\bigg\}
\leq\bigg(\frac{1}{3}+\frac{1}{2}\mk\bigg)\mn(\delta),
\end{equation}
where
\begin{align}
%\|\Psi^{(l)}\|_{C^{1}(D)}&\mathop{=}\limits^{def.}\mathop{\max}\limits_{i=1,2}\{\|\Psi_{i}^{(l)}\|_{C^{0}(D)},\|\partial_{t}\Psi_{i}^{(l)}\|_{C^{0}(D)}
% ,\|\partial_{x}\Psi_{i}^{(l)}\|_{C^{0}(D)}\},\notag\\
\varpi(\delta|h)&=
\mathop{\sup}\limits_{\substack{|t^{1}-t^{2}|\leq\delta\\|r^{1}-r^{2}|
\leq\delta}}|h(t^{1},r^{1})-h(t^{2},r^{2})|,\notag
\end{align}
and $\mn(\delta)$ is a continuous function of $\delta\in(0,1)$ which is independent of $k$ and satisfies
$$
\mathop{\lim}\limits_{\delta\rightarrow0^{+}}\mn(\delta)=0.
$$
\end{proposition}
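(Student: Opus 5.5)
We argue by induction on $k$, starting from \eqref{3-9}. We prove \eqref{3-10}--\eqref{3-16} together, with $\sigma_1$ small and $r_1-r_0\le \xi_0 r_0$ as in \eqref{1-16-re}. In each step the four problems are solved in the order \eqref{3-7}, \eqref{3-8}, \eqref{3-6}, \eqref{3-5}. The first three are integrated along forward characteristics $dt/dr=\mu_i(\bm\Psi^{(k-1)}+\bar{\bf R})$ issuing from $r=r_0$. Problem \eqref{3-5} is integrated backward from $r=r_1$, using the already computed $\Psi_4^{(k)}(t,r_1)$ in the boundary condition.

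\textbf{Periodicity.} Since $\mh_i$ and $\bm\Psi^{(k-1)}$ are $T$-periodic, the characteristic families are invariant under $t\mapsto t+T$. Each linear problem is posed on $\mathbb{R}\times[r_0,r_1]$ with data on a single boundary, so its solution is unique. Hence $\Psi_i^{(k)}(t+T,r)$ solves the same problem, and \eqref{3-10} follows.

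\textbf{$C^0$ bound.} For \eqref{3-7} and \eqref{3-8}, $r\Psi_2^{(k)}$ and $\Psi_3^{(k)}$ are constant along characteristics. This gives $|\Psi_{2,3}^{(k)}|\le C(\sigma+|K_{2,3}|M_1\sigma)$. For \eqref{3-6} we use the integrating factor $e^{\int_{r_0}^r\ma_{41}/s\,ds}$. Since $\ma_{41}>0$, this factor weakly damps. We get
$$|\Psi_4^{(k)}|\le \sigma+|K_4|M_1\sigma+C\log(r_1/r_0)M_1\sigma+CM_1^2\sigma^2+\cdots.$$
The coefficients $\ma_{ij},\mm_{ij}$ depend only on the background. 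They are bounded on $[r_0,r_0(1+\xi_0)]$ in terms of the entrance data, which is why no smallness of the background is needed.

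For \eqref{3-5}, integrating backward from $r_1$ gives
$$|\Psi_1^{(k)}|\le \sigma+|\Psi_4^{(k)}(r_1)|+C\log(r_1/r_0)M_1\sigma+\cdots.$$
Feeding back, the coefficient of $M_1\sigma$ is $\max(|K_i|)+C\xi_0$ plus a quadratic term $CM_1\sigma$. Choose $\xi_0$ so that $\max_i|K_i|+C\xi_0<1$, and then take $M_1$ large and $\sigma_1$ small.

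\textbf{Main obstacle.} The term $(\Psi_4^{(k)}-\Psi_1^{(k-1)}+\cdots)(\partial_r\Psi_3^{(k-1)}+\mu_{1,4}\partial_t\Psi_3^{(k-1)})$ appears to lose a derivative. We handle it using \eqref{3-8}: there $\partial_r\Psi_3^{(k-1)}=-\mu_3\partial_t\Psi_3^{(k-1)}$. So this term equals $(\mu_{1,4}-\mu_3)\partial_t\Psi_3^{(k-1)}$, and it is controlled by the $t$-derivative bound alone.

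\textbf{$C^1$ estimates.} We first estimate $\partial_t$. Differentiating each equation in $t$ gives transport equations for $\partial_t\Psi_i^{(k)}$ with the same characteristics, plus a commutator term $\partial_t\mu_i\,\partial_t\Psi_i^{(k)}=O(M_1\sigma)\partial_t\Psi_i^{(k)}$. The boundary data are $\mh_i'$ and $K_i\partial_t\Psi_1^{(k-1)}$. The $\Psi_3$ coupling term now contains $\partial_t^2\Psi_3^{(k-1)}$. To avoid this, write the coupling term as a derivative along the $\lambda_1$ (resp. $\lambda_4$) characteristic of a product, minus lower-order terms, and integrate by parts along that characteristic. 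This is the abstract's idea of rewriting the entropy derivative along the first or fourth direction. The same contraction factor $\max|K_i|+C\xi_0$ then closes the bound $M_1\sigma$. The $\partial_r$ bound $M_2\sigma$ follows directly from each equation, since $\partial_r=-\mu\partial_t+{}$RHS; this is why $M_2$ is allowed to depend on $\mk$.

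\textbf{Contraction \eqref{3-15}.} Subtract the equations at levels $k$ and $k-1$. The differences satisfy linear transport equations whose sources involve $\bm\Psi^{(k-1)}-\bm\Psi^{(k-2)}$ multiplied by bounded quantities. These include $(\mu(\bm\Psi^{(k-1)})-\mu(\bm\Psi^{(k-2)}))\partial_t\Psi^{(k)}$, which is bounded thanks to \eqref{3-11}. The coupling term is again handled by the characteristic integration by parts. The same boundary/width estimate gives the contraction factor $\eta=\max|K_i|+C\xi_0+CM_1\sigma<1$.

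\textbf{Modulus of continuity \eqref{3-16}.} Differentiate along characteristics and compare the two integral representations. We use continuity of the data $\mh_i'$ and of the background, and apply induction with the same contraction factor, giving the $\frac13$ part. The $\frac12\mk$ part comes from the $\partial_r$ derivative expressed through $\mu\partial_t$. This is the standard Li--Yu/Qu type argument, so $\mn(\delta)$ is built from the moduli of $\mh_i'$ and $\bar{\bf R}'$.
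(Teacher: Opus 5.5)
Your outline is correct and follows essentially the same route as the paper. The shared steps are: integration along characteristics with the integrating factors (the paper's $F_1$, $F_2$); closing the $C^0$ and $\partial_t$ bounds from $\max_i|K_i|<1$ together with the $\ln(r_1/r_0)$-smallness of the narrow annulus; integrating the $\Psi_3$-coupling term by parts along the first/fourth characteristic to avoid second derivatives of $\Psi_3^{(k-1)}$; reading the $\partial_r$ bounds off the equations; the $C^0$ contraction; and the Li--Yu/Qu equicontinuity argument, which the paper runs on the fixed-$r$ temporal modulus before passing to the full modulus \eqref{3-16}. Your only deviation is rewriting the coupling term at the $C^0$ level as $(\mu_{1,4}-\mu_3)\partial_t\Psi_3^{(k-1)}$ via \eqref{3-8}. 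That is harmless but unnecessary, since the paper simply bounds this term quadratically using the inductive $C^1$ bounds.
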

\begin{proof} Assume that
\begin{equation}\label{3-10-1}
{\Psi}_i^{(k-1)}(t+T,r)={\Psi}_i^{(k-1)}(t,r),\ \ \forall(t,r)\in \mathbb{R}\times[r_0,r_1], \ \ i=1,2,3,4,
\end{equation}
\begin{equation}\label{3-11-1}
\mathop{\max}\limits_{i=1,2,3,4}\Big\{\|\Psi_{i}^{(k-1)}\|_{C^{0}(D)}, \|\p_t\Psi_{i}^{(k-1)}\|_{C^{0}(D)}\Big\}\leq M_1\sigma, \end{equation}\begin{equation}\label{3-11-1-r} \mathop{\max}\limits_{i=1,2,3,4} \|\p_r\Psi_{i}^{(k-1)}\|_{C^{0}(D)}\leq M_2\sigma,
\end{equation}
\begin{equation}\label{3-15-1}
\mathop{\max}\limits_{i=1,2,3,4}\|{\Psi}_i^{(k-1)}-{\Psi}_i^{(k-2)}\|_{C^{0}(D)}\leq M_1\sigma \eta^{k-1}, \ \ \forall k\geq 2,
\end{equation}
\begin{equation}\label{3-16-1}
\mathop{\max}\limits_{i=1,2,3,4}\varpi\bigg(\delta|\partial_{t}\Psi_{i}^{(k-1)}(\cdot,r)
\bigg)
\leq\frac{\mn(\delta)}{8[\mathcal{K}+1]},\ \ \forall r\in[r_0,r_1],
\end{equation}
\begin{equation}\label{3-17-1}
\mathop{\max}\limits_{i=1,2,3,4}\bigg\{\varpi\bigg(\delta|\partial_{t}\Psi_{i}^{(k-1)}
\bigg)+
\varpi\bigg(\delta|\partial_{r}\Psi_{i}^{(k-1)}\bigg)\bigg\}
\leq\bigg(\frac{1}{3}+\frac{1}{2}\mk\bigg)\mn(\delta),
\end{equation}
where $[\mathcal{K}+1]$ represents the integer part of $\mathcal{K}+1$ and
$$\varpi(\delta|h(\cdot,r))=\mathop{\max}\limits_{|t^{1}-t^{2}|
\leq\delta}|h(t^{1},r)-h(t^{2},r)|.$$
We will show that
\begin{equation}\label{3-10-2}
{\Psi}_i^{(k)}(t+T,r)={\Psi}_i^{(k)}(t,r),\ \ \forall(t,r)\in \mathbb{R}\times[r_0,r_1], \ \ i=1,2,3,4,
\end{equation}
\begin{equation}\label{3-11-2}
\mathop{\max}\limits_{i=1,2,3,4}\Big\{\|\Psi_{i}^{(k)}\|_{C^{0}(D)}, \|\p_t\Psi_{i}^{(k)}\|_{C^{0}(D)}\Big\}\leq M_1\sigma,\end{equation} \begin{equation}\label{3-11-2-r} \mathop{\max}\limits_{i=1,2,3,4} \|\p_r\Psi_{i}^{(k)}\|_{C^{0}(D)}\leq M_2\sigma,
\end{equation}
\begin{equation}\label{3-15-2}
\mathop{\max}\limits_{i=1,2,3,4}\|{\Psi}_i^{(k)}-{\Psi}_i^{(k-1)}\|_{C^{0}(D)}\leq M_1\sigma \eta^{k},
\end{equation}
\begin{equation}\label{3-16-2}
\mathop{\max}\limits_{i=1,2,3,4}\varpi\bigg(\delta|\partial_{t}\Psi_{i}^{(k)}
(\cdot,r)\bigg)
\leq\frac{\mn(\delta)}{8[\mathcal{K}+1]},\ \ \forall r\in[r_0,r_1],
\end{equation}
\begin{equation}\label{3-17-2}
\mathop{\max}\limits_{i=1,2,3,4}\bigg\{\varpi\bigg(\delta|\partial_{t}\Psi_{i}^{(k)}\bigg)+
\varpi\bigg(\delta|\partial_{r}\Psi_{i}^{(k)}\bigg)\bigg\}
\leq\bigg(\frac{1}{3}+\frac{1}{2}\mk\bigg)\mn(\delta).
\end{equation}
\par Next, we separately verify the above estimates.
\par{ \bf  Step 1. Proof of \eqref{3-10-2}.} Define the characteristic curve $t=t_{i}^{(k)}(s;t,r)(i=1,2,3,4)$ by
\begin{align}
\left\{
\begin{aligned}
&\frac{\de t_{i}^{(k)}}{\de s}(s;t,r)=\mu_{i}
(\bm\Psi^{(k-1)}+\bar{\bf R})(t_{i}^{(k)}(s;t,r),s),\\
&t_{i}^{(k)}(r;t,r)=t.
\end{aligned}\right.\label{c24}
\end{align}
 Denote
 \begin{equation*}
 \begin{aligned}
 F_1(r)=e^{-\int_{r}^{r_1}\frac{\ma_{11}(s)}{s}\de s},\ \
 F_2(r)=e^{\int_{r_0}^{r}\frac{\ma_{41}(s)}{s}\de s},\\
 \end{aligned}
 \end{equation*}
 from which one has
 \begin{gather*}
F_1'(r)=F_1(r)\frac{ (\gamma+1)\bar U_2^2 +(\gamma-1)\bar U_1^2+2\bar c^2}{ 4r(\bar U_1^2-\bar c^2) }<0,\
 F_2'(r)=F_2(r)\frac{ (\gamma+1)\bar U_2^2 +(\gamma-1)\bar U_1^2+2\bar c^2}{ 4r(\bar U_1+\bar c)^2 }>0.
 \end{gather*}
 Then it is easy to obtain that
 \begin{equation*}
1\leq F_1(r)\leq F_1(r_0), \ \  1\leq F_2(r)\leq F_1(r_0).\end{equation*} Here we used  $(\bar U_1+\bar c)^2>(\bar c^2-\bar U_1^2).$
 Furthermore, a direct computation yields that
 \begin{equation}\label{3-18-b}
 \begin{cases}
 \begin{aligned}
 &(\gamma+1)\bar U_2^2 +(\gamma-1)\bar U_1^2+2\bar c^2=2(\gamma-1)B_0+2\bar U_2^2 \leq 2(\gamma-1)B_0+2U_{2,0}^2,\\
&(\bar c^2-\bar U_1^2)'=\gamma(\gamma-1)e^{S_0}\bar \rho^{\gamma-2}\frac{\bar U_1^2+\bar U_2^2}{r(\bar c^2-\bar U^2)}\bar\rho +\frac{\bar c^2+ \bar U_{2}^2}{r(\bar c^2-\bar U_1^2)}U_{1}^2>0.
\end{aligned}
 \end{cases}
 \end{equation}
 Then one has
 \begin{equation}\label{3-18-b-de}
 F_1(r_0)\leq e^{\frac{(\gamma-1)B_0+U_{2,0}^2}{2( c_0^2- U_{1,0}^2)}\ln \frac{r_1}{r_0}}= \bigg(\frac{r_1}{r_0}\bigg)^{\frac{(\gamma-1)B_0+U_{2,0}^2}{2( c_0^2- U_{1,0}^2)}}:=\me.
  \end{equation}
    Thus the problem \eqref{3-6}-\eqref{3-5} can be rewritten  as follows:
\begin{equation}\label{3-6-1}
\begin{cases}
\begin{aligned}
	&\partial_r \bigg(F_2\Psi_4^{(k)}\bigg)+\mu_{4}( \bm\Psi^{(k-1)}+\bar{\bf R})\partial_t \bigg(F_2\Psi_4^{(k)}\bigg) \\ &=\frac{F_2}r\bigg(
(\ma_{42}-\ma_{41})(r)\Psi_1^{(k-1)} +\ma_{43}(r) \Psi_2^{(k-1)}-\ma_{44}(r)\Psi_3^{(k-1)}\bigg)+F_2\bigg(\mu_{4}( \bm\Psi^{(k-1)}+\bar{\bf R})\\
&\quad  -\mu_4(\bar{\bf R})\bigg)\bigg(\mm_{41}(r)\Psi_1^{(k-1)}-\mm_{44}(r)\Psi_4^{(k-1)}+\mm_{42}(r)
 \Psi_2^{(k-1)}-\mm_{43}(r)\Psi_3^{(k-1)}\bigg)\\
&\ +F_2\mu_4( \bm\Psi^{(k-1)}+\bar{\bf R})\bigg(\frac{(\Psi_2^{(k-1)})^2}{r} - \frac{\gamma-1}{8r}\bigg((\Psi_4^{(k-1)})^2 - (\Psi_1^{(k-1)})^2\bigg)-\frac{\bar c  }{4\gamma r }\Psi_3^{(k-1)}(\Psi_1^{(k-1)}\\
&\quad + \Psi_4^{(k-1)})\bigg) +F_2\bigg(\frac{{\Psi}_4^{(k-1)} - {\Psi}_1^{(k-1)}}{4\gamma}+ \frac{\bar c\Psi_3^{(k-1)}}{2\gamma}\bigg)\bigg(\partial_r \Psi_3^{(k-1)} + \mu_4( \bm\Psi^{(k-1)}+\bar{\bf R}) \partial_t \Psi_3^{(k-1)}\bigg),\\
&r=r_0: \Psi_{4}^{(k)}(t,r_0)= K_4 {\Psi}_1^{(k-1)}(t,r_0) + \mh_4 (t),
\end{aligned}
\end{cases}
\end{equation}
and
\begin{equation}\label{3-5-1}
\begin{cases}
\begin{aligned}
	&\partial_r \bigg(F_1\Psi_1^{(k)}\bigg)+\mu_{1}( \bm\Psi^{(k-1)}+\bar{\bf R})\partial_t \bigg(F_1\Psi_1^{(k)}\bigg)\\
&=\frac{F_1}r\bigg(-\ma_{12}(r)\Psi_1^{(k-1)}+(\ma_{13}-\ma_{11}-\ma_{12})(r)\Psi_4^{(k)}
+\ma_{14}(r)\Psi_2^{(k-1)}
+\ma_{15}(r)\Psi_3^{(k-1)}\bigg) \\
&\  +F_1\bigg(\mu_1(\bm\Psi^{(k-1)}+\bar{\bf R})-\mu_1(\bar{\bf R})\bigg)\bigg(-\mm_{11}(r)\Psi_1^{(k-1)}+\mm_{12}(r)\Psi_2^{(k-1)}+ \mm_{13}(r)\Psi_3^{(k-1)}
\\
 &\quad +\mm_{14}(r)\Psi_4^{(k)}\bigg) +F_1\mu_1( \bm\Psi^{(k-1)}+\bar{\bf R})\bigg(\frac{(\Psi_2^{(k-1)})^2}{r}+\frac{\gamma-1}{8r}\bigg((\Psi_4^{(k)})^2  - (\Psi_1^{(k-1)})^2\bigg)\\
&\quad +\frac{\bar   c}{4\gamma r }\Psi_3^{(k-1)}(\Psi_1^{(k-1)}+ \Psi_4^{(k)})\bigg) -F_1\bigg(\frac{{\Psi}_4^{(k)} - {\Psi}_1^{(k-1)}}{4\gamma} + \frac{\bar c\Psi_3^{(k-1)}}{2\gamma}\bigg)\bigg(  \partial_r \Psi_3^{(k-1)}
\\
&\quad +\mu_1( \bm\Psi^{(k-1)}+\bar{\bf R})\partial_t \Psi_3^{(k-1)}\bigg),\\
&r=r_1: \Psi_{1}^{(k)}(t,r_1)=\mh_{1}(t)-\Psi_{4}^{(k)}(t,r_1).
\end{aligned}
\end{cases}
\end{equation}
By  \eqref{1-16} and \eqref{3-10-1},  we obtain that if $F_{2}(r)\Psi_{4}^{(k)}(t,r)$, $F_{1}(r)\Psi_{1}^{(k)}(t,r)$ and $ \Psi_j^{(k)}(t,r)$ $(j=2,3)$ solve problems \eqref{3-6-1}-\eqref{3-5-1} and \eqref{3-7}-\eqref{3-8}, then $F_{2}(r)\Psi_{4}^{(k)}(t+T,r)$, $F_{1}(r)\Psi_{1}^{(k)}(t+T,r)$ and $ \Psi_j^{(k)}(t+T,r)$ $(j=2,3)$ also solves it. By the uniqueness of solutions to the  linear system, one has $F_{2}(r)\Psi_{4}^{(k)}(t+T,r)=F_{2}(r)\Psi_{4}^{(k)}(t,r)$, $F_{1}(r)\Psi_{1}^{(k)}(t+T,r)=F_{1}(r)\Psi_{1}^{(k)}(t,r)$ and $ \Psi_j^{(k)}(t+T,r)=\Psi_j^{(k)}(t,r)$ for $j=2,3$. Thus \eqref{3-10-2} is proved.
\par { \bf Step 2. Proof of \eqref{3-11-2}.} Integrating \eqref{3-6-1} along the fourth characteristic curve $t_{4}^{(k)}(s;t,r)$ from $r_0$ to $ r$ leads to
\begin{equation}\label{3-6-2}
\begin{aligned}
F_{2}(r)\Psi_{4}^{(k)}(t,r)-F_{2}(r_0)\Psi_{4}^{(k)}
(t_4^{(k)}(r_0;t,r),r_0)=\sum_{i=1}^3 I_{2i},\\
\end{aligned}
\end{equation}
where
\begin{equation*}
\begin{aligned}
F_2(r_0)&=1, \ \ \Psi_{4}^{(k)}(t_4^{(k)}(r_0;t,r),r_1)= K_4 {\Psi}_1^{(k-1)}(t_4^{(k)}(r_0;t,r),r_0) +\mh_4 (t_4^{(k)}(r_0;t,r)),\\
I_{21}&=\int_{r_0}^{r}-F_2'(r)\Psi_1^{(k-1)}(t_4^{(k)}(s;t,r),s)\de s,\\
I_{22}&=\int_{r_0}^r\frac{F_2(s)}s\bigg(
\ma_{42}(s)\Psi_1^{(k-1)} +\ma_{43}(s) \Psi_2^{(k-1)}-\ma_{44}(s)\Psi_3^{(k-1)}\bigg)(t_4^{(k)}(s;t,r),s)\de s,\\
 I_{23}&=\int_{r_0}^r\bigg(F_2(s)\bigg(\mm_{41}(s)\Psi_1^{(k-1)}
 -\mm_{44}(s)\Psi_4^{(k-1)}
 +\mm_{42}(r)
 \Psi_2^{(k-1)}-\mm_{43}(s)\Psi_3^{(k-1)}\bigg)\\
&\qquad+F_2(s)\mu_4( \bm\Psi^{(k-1)}+\bar{\bf R})\bigg(\frac{(\Psi_2^{(k-1)})^2}{s} - \frac{\gamma-1}{8s}\bigg((\Psi_4^{(k-1)})^2- (\Psi_1^{(k-1)})^2\bigg) \\
&\qquad -\frac{(\gamma-1)\mathfrak {c} }{4s}\Psi_3^{(k-1)}(\Psi_1^{(k-1)}+ \Psi_4^{(k-1)})\bigg)\bigg)
 +F_2(s)\bigg(\frac{{\Psi}_4^{(k-1)} - {\Psi}_1^{(k-1)}}{4\gamma}\frac{\mathfrak{c}\Psi_3^{(k-1)}}{2\gamma}\bigg)\bigg(\partial_r \Psi_3^{(k-1)} \\
&\qquad  + \mu_4( \bm\Psi^{(k-1)}+\bar{\bf R}) \partial_t \Psi_3^{(k-1)}\bigg)\bigg)(t_{4}^{(k)}(s;t,r),s)\de s.
\end{aligned}
\end{equation*}
Furthermore,  we integrate \eqref{3-5-1} along the first characteristic curve $t_{1}^{(k)}(s;t,r)$ from $r_1$ to $r$ to get
\begin{equation}\label{3-5-2}
\begin{aligned}
F_{1}(r)\Psi_{1}^{(k)}(t,r)-F_{1}(r_1)\Psi_{1}^{(k)}(t_{1}^{(k)}(r_1;t,r),r_1)=\sum_{i=1}^3 I_{1i},\\
\end{aligned}
\end{equation}
where
\begin{equation*}
\begin{aligned}
&F_1(r_1)=1, \ \ \Psi_{1}^{(k)}(t_{1}^{(k)}(r_1;t,r),r_1)=\mh_{1}(t_{1}^{(k)}(r_1;t,r))
-\Psi_{4}^{(k)}(t_{1}^{(k)}(r_1;t,r),r_1),\\
&I_{11}=\int_{r_1}^{r}-F_1'(r)\Psi_4^{(k)}(t_{1}^{(k)}(s;t,r),s)\de s,\\
&I_{12}=\int_{r_1}^{r}\frac{F_1(s)}s\bigg(
-\ma_{12}(s)\Psi_1^{(k-1)}+(\ma_{13}-\ma_{12})(s)\Psi_4^{(k)}
 +\ma_{14}(s)\Psi_2^{(k-1)}
+\ma_{15}(s)\Psi_3^{(k-1)}\bigg)(t_{1}^{(k)}(s;t,r),s)\de s,\\
 &I_{13}=\int_{r_1}^{r}\bigg( F_1(s)\bigg(\mu_1(\bm\Psi^{(k-1)}+\bar{\bf R})-\mu_1(\bar{\bf R})\bigg)\bigg(-\mm_{11}(s)\Psi_1^{(k-1)}+\mm_{12}(s)\Psi_2^{(k-1)} + \mm_{13}(s)\Psi_3^{(k-1)}+\mm_{14}(s)\Psi_4^{(k)}\bigg)
\\
 &\qquad\ \   +F_1(s)\mu_1( \bm\Psi^{(k-1)}+\bar{\bf R})\bigg(\frac{(\Psi_2^{(k-1)})^2}{s}+\frac{\gamma-1}{8s}\bigg((\Psi_4^{(k)})^2  (\Psi_1^{(k-1)})^2\bigg)+\frac{\bar   c}{4\gamma s}\Psi_3^{(k-1)}(\Psi_1^{(k-1)}+ \Psi_4^{(k)})\bigg)\\
&\qquad \ \ -F_1(s)\bigg(\frac{{\Psi}_4^{(k)} - {\Psi}_1^{(k-1)}}{4\gamma} + \frac{\bar c\Psi_3^{(k-1)}}{2\gamma}\bigg)\bigg(  \partial_r \Psi_3^{(k-1)}+\mu_1( \bm\Psi^{(k-1)}+\bar{\bf R})\partial_t \Psi_3^{(k-1)}\bigg)\bigg)(t_{1}^{(k)}(s;t,r),s)\de s.
\end{aligned}
\end{equation*}
For the first and third terms, it is easy to obtain that
\begin{equation}\label{3-5-2-es1}
\begin{cases}
\begin{aligned}
&|I_{21}|\leq M_1\sigma(F_2(r)-1), \  \ |I_{23}|\leq C_1 (M_1\sigma)^2,\\
&|I_{11}|\leq \|\Psi_{4}^{(k)}\|_{C^{0}(D)}(F_1(r)-1), \  \ |I_{13}|\leq C_1 \bigg((M_1\sigma)^2+\|\Psi_{4}^{(k)}\|_{C^{0}(D)}^2\bigg).
\end{aligned}\end{cases}
\end{equation}
 Here  $ C_1$ is a positive constant depending only on $ (\gamma,r_0,r_1,\rho_0,U_{1,0},U_{2,0},S_0)$.
For the second term,  a simple computation shows that
 \begin{equation*}
\begin{aligned}
&(\bar c-\bar U_1)'=\frac{\gamma-1}2(\gamma e^{S_0})^{\frac12}\bar \rho^{\frac{\gamma-3}2}\frac{\bar U_1^2+\bar U_2^2}{r(\bar c^2-\bar U^2)}\bar\rho +\frac{\bar c^2+ \bar U_{2}^2}{r(\bar c^2-\bar U_1^2)}U_{1}>0 \ {\rm{and}} \ \bar c-\bar U_1\geq c_0- U_{1,0}.
\end{aligned}
\end{equation*}
This, together with \eqref{3-18-b}, yields that
\begin{equation}\label{3-5-2-es2-es}
\begin{cases}
\begin{aligned}
&\ma_{12}(r)=\frac{ (\gamma+1)\bar U_2^2 +(\gamma-1)\bar U_1^2+2\bar c^2}{ 2(\bar U_1-\bar c)^2(\frac{\bar U_1}{\bar c}+1)}\leq \frac{(\gamma-1)B_0+U_{2,0}^2}{( c_0- U_{1,0})^2},\\
&\ma_{13}(r)=\frac{ (\gamma-1)(\bar U_1^2+\bar U_2^2)}{ 2(\bar U_1-\bar c)^2 }\leq \frac{2(\gamma-1)B_0}{2( c_0- U_{1,0})^2}=\frac{(\gamma-1)B_0}{( c_0- U_{1,0})^2},  \\ &\ma_{14}(r)=\frac{2\bar U_2}{\bar c-\bar U_1}\leq  \frac{2U_{2,0}}{ c_0- U_{1,0}},\\
& \ma_{15}(r)=\frac{\bar c^2(\bar U_1^2+\bar U_2^2)}{\gamma (\bar c+\bar U_1)(\bar c-\bar U_1)^2}\leq  \frac{2(\gamma-1)B_0^2}{\gamma (\bar c-\bar U_1)^3}\leq  \frac{2(\gamma-1)B_0^2}{\gamma ( c_0- U_{1,0})^3},
\end{aligned}
\end{cases}
\end{equation}
and
\begin{equation}\label{es1}
\begin{cases}
\begin{aligned}
&\ma_{42}(r)=\frac{ (\gamma-1)(\bar U_1^2+\bar U_2^2)}{ 2(\bar U_1+\bar c)^2 }< \frac{ (\gamma-1)(\bar U_1^2+\bar U_2^2)}{ 2(\bar c-\bar U_1)^2 }\leq\frac{(\gamma-1)B_0}{( c_0- U_{1,0})^2},  \\ &\ma_{43}(r)=\frac{2\bar U_2}{\bar c+\bar U_1}< \frac{2\bar U_2}{\bar c-\bar U_1} \leq \frac{2U_{2,0}}{ c_0- U_{1,0}},\\
&\ma_{44}(r)=\frac{\bar c^2(\bar U_1^2+\bar U_2^2)}{\gamma (\bar c+\bar U_1)^2(\bar c-\bar U_1)} \leq \frac{2(\gamma-1)B_0^2}{\gamma ( c_0- U_{1,0})^3}.
\end{aligned}
\end{cases}
\end{equation}
Combining the above estimates obtains
\begin{equation}\label{3-5-2-es2}
|I_{22}|\leq \me \ln \frac{r_1}{r_0}M_1A_0\sigma,\ \
|I_{12}|\leq \me \ln \frac{r_1}{r_0}A_0\bigg(M_1\sigma +\|\Phi_{4}^{(k)}\|_{C^{0}(D)}\bigg),
\end{equation}
where
$$A_0=\frac{(\gamma-1)B_0}{( c_0- U_{1,0})^2}+\frac{(\gamma-1)B_0+U_{2,0}^2}{( c_0- U_{1,0})^2}+\frac{2(\gamma-1)B_0^2}{\gamma ( c_0- U_{1,0})^3}.
$$
Therefore,  it follows from \eqref{1-16},  and \eqref{3-5-2-es1}-\eqref{3-5-2-es2} that one derives
\begin{equation}\label{3-6-2-es2-1}
\begin{aligned}
\|\Psi_{4}^{(k)}\|_{C^{0}(D)}\leq \frac{\sigma+| K_4|M_1\sigma}{F_2(r)}
+\frac{M_1\sigma(F_2(r)-1)+C_1 (M_1\sigma)^2}{F_2(r)}+\frac{\me M_1 A_0\sigma }{F_2(r)}\ln \frac{r_1}{r_0},
\end{aligned}
\end{equation} and
 \begin{equation}\label{3-5-2-es2-1}
\begin{aligned}
\|\Psi_{1}^{(k)}\|_{C^{0}(D)}&\leq \frac{\sigma+\|\Psi_{4}^{(k)}\|_{C^{0}(D)}}{F_1(r)}
+\frac{\|\Psi_{4}^{(k)}\|_{C^{0}(D)}(F_1(r)-1)+C_1\bigg( (M_1\sigma)^2+\|\Psi_{4}^{(k)}\|_{C^{0}(D)}^2\bigg)}{F_1(r)}\\
&\quad+\frac{\me A_0}{F_1(r)}\ln \frac{r_1}{r_0} \bigg(M_1\sigma +\|\Psi_{4}^{(k)}\|_{C^{0}(D)}\bigg).
\end{aligned}
\end{equation}
\par Set \begin{equation}\label{3-5-2-de}M_1=\frac{100\me}{1-K}, \ K=\max\Big\{|K_2|,|K_3|,|K_4|\Big\}, \ \ \sigma_1^\ast=\frac1{2(C_1M_1^2+1)}.\end{equation} Then it holds that
\begin{equation}\label{3-5-2-es4}
M_1\geq \max\Big\{|K_2|,|K_3|,|K_4|\Big\}M_1+100\me.
\end{equation}
Furthermore,  for any $\sigma\in(0,\sigma_1^\ast)$, one has
$C_1(M_1\sigma)^2<\sigma.$ Define $$\frac{r_1}{r_0}=1+\xi, \ \ A_1=\frac{(\gamma-1)B_0+U_{2,0}^2}{2( c_0^2- U_{1,0}^2)},$$
and
$$\mf_1(\xi)=\ln \frac{r_1}{r_0} \me M_1 A_0 =\frac{100}{1-K}A_0(1+\xi)^{2A_1}\ln(1+\xi).$$
Here we used \eqref{3-18-b-de}. Then a direct computation yields that
\begin{equation*}
\begin{aligned}
\mf_1'(\xi)&=\frac{100}{1-K}A_0(1+\xi)^{2A_1-1}
(2A_1\ln(1+\xi)+1),\\
\mf_1''(\xi)&=\frac{100}{1-K}A_0(1+\xi)^{2A_1-2}
(2A_1(A_1-1)\ln(1+\xi)+4A_1-1),\\
\mf_1'(0)&=\frac{100}{1-K}A_0
>0,\\
\mf_1''(0)&=\frac{100}{1-K}A_0(4A_1-1)=\frac{100}{1-K} A_0
\frac{2(\gamma-1)B_0+2U_{2,0}^2- c_0^2+ U_{1,0}^2}{ c_0^2- U_{1,0}^2}>0.
\end{aligned}
\end{equation*}
Thus there exists a small constant $\xi_1>0$ such that for any $\xi\in(0,\xi_1)$, one has
$$\mf_1''(\xi)\leq 2\mf_1''(0)$$
and
$$\mf_1(\xi)=\mf_1(0)+\mf_1'(0)\xi+\frac12\mf_1''(s)\xi^2\leq \mf_1'(0)\xi+\mf_1''(0)\xi^2 \leq (\mf_1'(0)+\xi_1\mf_1''(0))\xi, \ \ s\in(0,\xi).$$
 Set
\begin{equation}\label{3-5-2-es2-6-1}
\xi_0=\min\bigg\{\xi_1,\frac{1}{\mf_1'(0)+\xi_1\mf_1''(0)+1}\bigg\}.\end{equation}
Then for any $ \xi\in(0,\xi_0)$, one has
\begin{equation}\label{3-5-2-es2-6-1-2}
\mf_1(\xi)={\me}M_1 A_0\ln \frac{r_1}{r_0} \leq 1.\end{equation}
If $\sigma\in(0,\sigma_1^\ast)$  ,  it follows from \eqref{3-6-2-es2-1}, \eqref{3-5-2-es4} and \eqref{3-5-2-es2-6-1-2} that \begin{equation}\label{3-6-2-es2-7}
\begin{aligned}
\|\Psi_{4}^{(k)}\|_{C^{0}(D)}&\leq \frac{\sigma+|K_4|M_1\sigma}{F_2(r)}
+\frac{M_1\sigma(F_2(r)-1)+C_1 (M_1\sigma)^2}{F_2(r)}+\frac{\me M_1 A_0\sigma }{F_2(r)}\ln \frac{r_1}{r_0}\\
&\leq M_1\sigma-\frac{99\me\sigma}{F_2(r)}+\frac{\sigma}{F_2(r)}+\frac{\me M_1A_0\sigma }{F_2(r)}\ln \frac{r_1}{r_0} \\
&
 \leq M_1\sigma-\frac{99\me\sigma}{F_2(r)}+\frac{\sigma}{F_2(r)}+\frac{\mf_1(\xi)\sigma}{F_2(r)} \leq M_1\sigma-\frac{99\me\sigma}{F_2(r)}+\frac{2\sigma}{F_2(r)}
\leq\chi_{1}M_{1}\sigma,\end{aligned}
\end{equation}
where  $\chi_1=1-\frac{97}{M_1}$ is a constant satisfying $0<\chi_1<1$. This, together with \eqref{3-5-2-es2-1}, yields that
\begin{equation}\label{3-5-2-es2-1-1}
\begin{aligned}
\|\Psi_{1}^{(k)}\|_{C^{0}(D)}&\leq \frac{\sigma+\chi_{1}M_{1}\sigma}{F_1(r)}
+\frac{\chi_{1}M_{1}\sigma(F_1(r)-1)+C_1\bigg( (M_1\sigma)^2+(\chi_{1}M_{1}\sigma)^2\bigg)}{F_1(r)}\\
&\quad+\frac{\me A_0}{F_1(r)}\ln \frac{r_1}{r_0} 2M_1\sigma
\leq \chi_{1}M_{1}\sigma+\frac{3\sigma}{F_1(r)}+\frac{2\me M_1 A_0\sigma }{F_1(r)}\ln \frac{r_1}{r_0} \\ &=\chi_{1}M_{1}\sigma+\frac{3\sigma}{F_1(r)}+\frac{2\mf_1(\xi)\sigma}{F_1(r)}
\leq \chi_{1}M_{1}\sigma+\frac{5\sigma}{F_1(r)}= \chi_{2}M_{1}\sigma,
\end{aligned}
\end{equation}
where  $\chi_2=1-\frac{92}{M_1}$ is a constant satisfying $0<\chi_1<\chi_2<1$. In the following, by integrating \eqref{3-7} and \eqref{3-8} along the characteristic curves $t_{2}^{(k)}(s;t,r)$ and $t_{3}^{(k)}(s;t,r)$ from $r_0$ to $r$, respectively, one derives
\begin{equation}\label{3-7-1-es1}
\begin{cases}
\begin{aligned}
\|\Psi_{2}^{(k)}\|_{C^{0}(D)}&=\bigg\|\frac{r_0}r\bigg( K_2{\Psi}_1^{(k-1)}(\cdot,r_0) + \mh_2(\cdot)\bigg)\bigg\|_{C^{0}(\mathbb{R})}\leq \sigma+|K_2|M_{1}\sigma  < M_{1}\sigma,\\
\|\Psi_{3}^{(k)}\|_{C^{0}(D)}&=\bigg\| K_3{\Psi}_1^{(k-1)}(\cdot,r_0) + \mh_3(\cdot)
\bigg\|_{C^{0}(\mathbb{R})} \leq \sigma+|K_3|M_{1}\sigma <M_{1}\sigma.\\
\end{aligned}
\end{cases}
\end{equation}
\par Next, we derive the $C^1$ estimates for $\Psi_{i}^{(k)}$ $(i=1,2,3,4)$. Set
$\Upsilon_{i}^{(k)}=\p_t\Psi_{i}^{(k)}$.
Differentiating  \eqref{3-6-1}-\eqref{3-5-1}  and \eqref{3-7}-\eqref{3-8}  with respect to $ t $, one  gets
\begin{equation*}\begin{aligned}
&\partial_r \bigg(F_2\Upsilon_4^{(k)}\bigg)+\mu_{4}( \bm\Psi^{(k-1)}+\bar{\bf R})\partial_t \bigg(F_2\Upsilon_4^{(k)}\bigg)\\
&=\frac{F_2}r\bigg(
(\ma_{42}-\ma_{41})(r)\Upsilon_1^{(k-1)} +\ma_{43}(r) \Upsilon_2^{(k-1)}-\ma_{44}(r)\Upsilon_3^{(k-1)}\bigg)\\
&\ +F_2\bigg(\mu_{4}( \bm\Psi^{(k-1)}+\bar{\bf R}) -\mu_4(\bar{\bf R})\bigg)\bigg(\mm_{41}(r)\Upsilon_1^{(k-1)}-\mm_{44}(r)\Upsilon_4^{(k-1)}\\
&\quad +\mm_{42}(r)
 \Upsilon_2^{(k-1)}-\mm_{43}(r)\Upsilon_3^{(k-1)}\bigg)+F_2\mu_4( \bm\Psi^{(k-1)}+\bar{\bf R})\p_t\bigg(\frac{(\Psi_2^{(k-1)})^2}{r} \\
&\quad - \frac{\gamma-1}{8r}\bigg((\Psi_4^{(k-1)})^2 - (\Psi_1^{(k-1)})^2\bigg)-\frac{\bar c  }{4\gamma r }\Psi_3^{(k-1)}(\Psi_1^{(k-1)}+ \Psi_4^{(k-1)})\bigg) \\ \end{aligned}
\end{equation*}\begin{equation}\label{3-6-t}\begin{aligned}
&\ +F_2\sum_{i=1}^4\frac{\p\mu_{4}(\bm\Psi^{(k-1)}+\bar{\bf R})}{\p \Psi_i^{(k-1)}}\Upsilon_i^{(k-1)}\bigg(-\Upsilon_4^{(k)}+\mm_{41}(r)\Psi_1^{(k-1)}
-\mm_{44}(r)\Psi_4^{(k-1)}\\
&\quad  +\mm_{42}(r)
 \Psi_2^{(k-1)}-\mm_{43}(r)\Psi_3^{(k-1)}+\frac{(\Psi_2^{(k-1)})^2}{r}- \frac{\gamma-1}{8r}\bigg((\Psi_4^{(k-1)})^2- (\Psi_1^{(k-1)})^2\bigg)\\
&\quad-\frac{\bar c  }{4\gamma r }\Psi_3^{(k-1)}(\Psi_1^{(k-1)}+ \Psi_4^{(k-1)})+\bigg(\frac{{\Psi}_4^{(k-1)} - {\Psi}_1^{(k-1)}}{4\gamma} + \frac{\bar c\Psi_3^{(k-1)}}{2\gamma}\bigg)\Upsilon_3^{(k-1)}\bigg)\\
&\ \ +F_2\bigg(\frac{{\Upsilon}_4^{(k-1)} - {\Upsilon}_1^{(k-1)}}{4\gamma}+ \frac{\bar c\Upsilon^{(k-1)}}{2\gamma}\bigg)\bigg(\partial_r \Psi_3^{(k-1)} + \mu_4( \bm\Psi^{(k-1)}+\bar{\bf R}) \partial_t \Psi_3^{(k-1)}\bigg)\\
&\ \  +F_2\bigg(\frac{{\Psi}_4^{(k-1)} - {\Upsilon}_1^{(k-1)}}{4\gamma}+ \frac{\bar c\Psi_3^{(k-1)}}{2\gamma}\bigg)\bigg(\partial_r \Upsilon_3^{(k-1)} + \mu_4( \bm\Psi^{(k-1)}+\bar{\bf R}) \partial_t \Upsilon_3^{(k-1)}\bigg),\\
\end{aligned}
\end{equation}and
\begin{equation}\label{3-5-t}\begin{aligned}
&\partial_r \bigg(F_1\Upsilon_1^{(k)}\bigg)+\mu_{1}( \bm\Psi^{(k-1)}+\bar{\bf R})\partial_t \bigg(F_1\Upsilon_1^{(k)}\bigg)\\
&=\frac{F_1}r
\bigg(-\ma_{12}(r)\Upsilon_1^{(k-1)}+(\ma_{13}-\ma_{11}-\ma_{12})(r)\Upsilon_4^{(k)}  +\ma_{14}(r)\Upsilon_2^{(k-1)}\\
&\quad
+\ma_{15}(r)\Upsilon_3^{(k-1)}\bigg)+F_1\bigg(\mu_1(\bm\Psi^{(k-1)}+\bar{\bf R})-\mu_1(\bar{\bf R})\bigg)\bigg(-\mm_{11}(r)\Upsilon_1^{(k-1)}\\
  &\quad+\mm_{12}(r)\Upsilon_2^{(k-1)}+ \mm_{13}(r)\Upsilon_3^{(k-1)}+\mm_{14}(r)\Upsilon_4^{(k)}\bigg)
  +F_1\mu_1( \bm\Psi^{(k-1)}+\bar{\bf R})\p_t\bigg(\frac{(\Psi_2^{(k-1)})^2}{r}\\
&\quad+\frac{\gamma-1}{8r}\bigg((\Psi_4^{(k)})^2  (\Psi_1^{(k-1)})^2\bigg)  +\frac{\bar   c}{4\gamma r }\Psi_3^{(k-1)}(\Psi_1^{(k-1)}+ \Psi_4^{(k)})\bigg)\\
&\ +F_1\sum_{i=1}^4\frac{\p\mu_{1}(\bm\Psi^{(k-1)}+\bar{\bf R})}{\p \Psi_i^{(k-1)}}\Upsilon_i^{(k-1)}\bigg(-\Upsilon_1^{(k)}
-\mm_{11}(r)\Psi_1^{(k-1)}+\mm_{12}(r)\Psi_2^{(k-1)}
  \\
&\quad + \mm_{13}(r)\Psi_3^{(k-1)}+\mm_{14}(r)\Psi_4^{(k)}+\frac{(\Psi_2^{(k-1)})^2}{r}  + \frac{\gamma-1}{8r}\bigg((\Psi_4^{(k-1)})^2+ (\Psi_1^{(k-1)})^2\bigg)\\
&\quad +\frac{\bar c  }{4\gamma r }\Psi_3^{(k-1)}(\Psi_1^{(k-1)}- \Psi_4^{(k-1)})+\bigg(\frac{{\Psi}_4^{(k-1)} + {\Psi}_1^{(k-1)}}{4\gamma}- \frac{\bar c\Psi_3^{(k-1)}}{2\gamma}\bigg)\Upsilon_3^{(k-1)}\bigg)\\
&\ -F_1\bigg(\frac{{\Upsilon}_4^{(k-1)} - {\Upsilon}_1^{(k-1)}}{4\gamma}+ \frac{\bar c\Upsilon^{(k-1)}}{2\gamma}\bigg)\bigg(\partial_r \Psi_3^{(k-1)} + \mu_1( \bm\Psi^{(k-1)}+\bar{\bf R}) \partial_t \Psi_3^{(k-1)}\bigg)\\
&\ -F_1\bigg(\frac{{\Psi}_4^{(k-1)} - {\Upsilon}_1^{(k-1)}}{4\gamma}+ \frac{\bar c\Psi_3^{(k-1)}}{2\gamma}\bigg)\bigg(\partial_r \Upsilon_3^{(k-1)} + \mu_1( \bm\Psi^{(k-1)}+\bar{\bf R}) \partial_t \Upsilon_3^{(k-1)}\bigg),\\
\end{aligned}
\end{equation} and
\begin{equation}\label{3-7-t}
\begin{aligned}
 \partial_r (r\Upsilon_2^{(k)})+  \mu_2(\bm\Psi^{(k-1)}+\bar{\bf R})}{\p \Psi_1^{(k-1)})\partial_t (r\Upsilon_2^{(k)})
=-r\bigg(\sum_{i=1}^4\frac{\p\mu_{2}(\bm\Psi^{(k-1)}+\bar{\bf R})}{\p \Psi_i^{(k-1)}}\Upsilon_i^{(k-1)}\bigg)\Upsilon_2^{(k)},\\
\end{aligned}
\end{equation}
and
\begin{equation}\label{3-8-t}
\begin{aligned}
 \partial_r \Upsilon_3^{(k)}+  \mu_3(\bm\Psi^{(k-1)}+\bar{\bf R})}{\p \Psi_1^{(k-1)})\partial_t \Upsilon_3^{(k)}
=-\bigg(\sum_{i=1}^4\frac{\p\mu_{3}(\bm\Psi^{(k-1)}+\bar{\bf R})}{\p \Psi_i^{(k-1)}}\Upsilon_i^{(k-1)}\bigg)\Upsilon_3^{(k)}.\\
\end{aligned}
\end{equation}
Furthermore, it follows from  \eqref{1-14} that
\begin{equation}\label{1-14-mod-11}
\begin{cases}
r=r_0:
\begin{cases}
\Upsilon_{4}^{(k)}(t,r_0) =  K_4\Upsilon_1^{(k-1)}(t,r_0)  +\mh_4' (t),\\
\Upsilon_{2}^{(k)}(t,r_0) =  K_2\Upsilon_1^{(k-1)}(t,r_0) +
\mh_2'(t), \\
\Upsilon_{3}^{(k)}(t,r_0) =  K_3\Upsilon_1^{(k-1)}(t,r_0)+ \mh_3'(t), \\
\end{cases} \\
r=r_1: \Upsilon_{1}^{(k)}(t,r_1) = \mh_{1}'(t) - \Upsilon_{4}^{(k)}(t,r_1).
\end{cases}
\end{equation}
\par Note that \eqref{3-6-t} has the term related to $\Upsilon_4^{(k)}$
and this term cannot be estimated by
 a priori estimate.  To this end, one can rewrite   \eqref{3-6-t} as
 \begin{equation}\label{3-5-t-1}\begin{aligned}
&\partial_r \Upsilon_4^{(k)}+\mu_{4}(\bm\Psi^{(k-1)}+\bar{\bf R})\partial_t \Upsilon_4^{(k)}\\
&=-\bigg(\frac{\ma_{41}(r)}r+\sum_{i=1}^4\frac{\p\mu_{4}(\bm\Psi^{(k-1)}+\bar{\bf R})}{\p \Psi_i^{(k-1)}}\Upsilon_i^{(k-1)}\bigg)\Upsilon_4^{(k)}
 +O(\sigma)\\
&\quad+\bigg(\frac{{\Psi}_4^{(k-1)} - {\Psi}_1^{(k-1)}}{4\gamma}
 + \frac{\bar{c}\Psi_3^{(k-1)}}{2\gamma}\bigg)\bigg(\partial_r \Upsilon_3^{(k-1)}+ \mu_4( \bm\Psi^{(k-1)}+\bar{\bf R}) \partial_t \Upsilon_3^{(k-1)}\bigg).\\
\end{aligned}
\end{equation}
Then
 we integrate \eqref{3-5-t-1} along the fourth characteristic curve $t_{4}^{(k)}(s;t,r)$ from $r_0$ to $r$.  It can be checked that the last term  in \eqref{3-5-t-1} which needs a
further analysis:
\begin{equation}\label{3-5-t-1-1}
\begin{aligned}
&\int_{r_0}^r\bigg(\frac{{\Psi}_4^{(k-1)} - {\Psi}_1^{(k-1)}}{4\gamma}
 + \frac{\bar{c}\Psi_3^{(k-1)}}{2\gamma}\bigg)\bigg(\partial_r \Upsilon_3^{(k-1)}+ \mu_4( \bm\Psi^{(k-1)}+\bar{\bf R}) \partial_t \Upsilon_3^{(k-1)}\bigg)(t_{4}^{(k)}(s;t,r),s)\de s\\
 &=\int_{r_0}^r\bigg(\frac{{\Psi}_4^{(k-1)} - {\Psi}_1^{(k-1)}}{4\gamma}
 + \frac{\bar{c}\Psi_3^{(k-1)}}{2\gamma}\bigg)\frac{\de }{\de s}\Upsilon_3^{(k-1)}(t_{4}^{(k)}(s;t,r),s)\de s\\
 &=\bigg(\frac{{\Psi}_4^{(k-1)} - {\Psi}_1^{(k-1)}}{4\gamma}
 + \frac{\bar{c}\Psi_3^{(k-1)}}{2\gamma}\bigg)\bigg(\Upsilon_3^{(k-1)}(t_{4}^{(k)}(r_0;t,r),r_0)
 -\Upsilon_3^{(k-1)}(t,r)\bigg)\\
&\quad -\int_{r_0}^r
 \Upsilon_3^{(k-1)}(t_{4}^{(k)}(s;t,r),s)\frac{\de}{\de s}\bigg(\frac{{\Psi}_4^{(k-1)} - {\Psi}_1^{(k-1)}}{4\gamma}
 + \frac{\bar{c}\Psi_3^{(k-1)}}{2\gamma}\bigg)(t_{4}^{(k)}(s;t,r),s)\de s\\
 &\leq C_2\sigma.
\end{aligned}
\end{equation}
Therefore, one has
\begin{equation*}
\begin{aligned}
\bigg|\Upsilon_4^{(k)}(t,r)\bigg|&=
\bigg|\Upsilon_4^{(k)}(t_{4}^{(k)}(r_0;t,r),r_0)+O(\sigma)\\
&\qquad-\int_{r_0}^r\bigg(\frac{\ma_{41}(s)}s+\sum_{i=1}^4\frac{\p\mu_{4}(\bm\Psi^{(k-1)}+\bar{\bf R})}{\p \Psi_i^{(k-1)}}\Upsilon_i^{(k-1)}\bigg)\Upsilon_4^{(k)}(t_{4}^{(k)}(s;t,r),s)\de s\bigg|\\
&=\bigg| K_4\Upsilon_1^{(k-1)}(t_{4}^{(k)}(r_0;t,r),r_0) +\mh_4' (t_{4}^{(k)}(r_0;t,r))+O(\sigma)\\
&\quad-\int_{r_0}^r\bigg(\frac{\ma_{41}(s)}s+\sum_{i=1}^4\frac{\p\mu_{4}(\bm\Psi^{(k-1)}+\bar{\bf R})}{\p \Psi_i^{(k-1)}}\Upsilon_i^{(k-1)}\bigg)\Upsilon_4^{(k)}(t_{4}^{(k)}(s;t,r),s)\de s\bigg|\\
&\quad\leq C_3\sigma+\int_{r_0}^rC_3|\Upsilon_4^{(k)}(t_{4}^{(k)}(s;t,r),s)|\de s.
\end{aligned}
\end{equation*}
By the   Gronwall inequality, there holds
\begin{equation}\label{3-5-t-2}
|\Upsilon_4^{(k)}|\leq C_4\sigma, \quad\forall(t,r)\in \mathbb{R}\times[r_0,r_1].
\end{equation}
Here  $ C_i$ $(i=2,3,4)$  are positive constants depending only on $ (\gamma,r_0,r_1,\rho_0,U_{1,0},U_{2,0},S_0)$.
\par Next, integrating \eqref{3-6-t} along $t_{4}^{(k)}(s;t,r)$ and using the estimates \eqref{1-16}, \eqref{3-11-1}, \eqref{3-5-2-es2-es}, \eqref{es1}, \eqref{3-5-2-es4} and \eqref{3-5-t-1-1}-\eqref{3-5-t-2} lead to
 \begin{equation}\label{1-4th}
\begin{aligned}
\|\Upsilon_{4}^{(k)}\|_{C^{0}(D)}&\leq \frac{\sigma+|K_4|M_1\sigma}{F_2(r)}
+\frac{M_1\sigma(F_2(r)-1)+C_5 (M_1\sigma)^2}{F_2(r)}+\frac{\me M_1 A_0\sigma }{F_2(r)}\ln \frac{r_1}{r_0},\\
\end{aligned}
\end{equation}
 where  $C_5$  is a  positive constant depending only on $ (\gamma,r_0,r_1,\rho_0,U_{1,0},U_{2,0},S_0)$. Set \begin{equation}\label{3-7-1-es2-es1}
 \sigma_2^\ast=\min\bigg\{\sigma_1^\ast,\frac1{C_5M_1^2+1}\bigg\},\end{equation}
 where $ \sigma_1^\ast$ is defined in \eqref{3-5-2-de}.
Then for any $\sigma\in(0,\sigma_2^\ast)$  and $ \xi\in(0,\xi_0)$   with   $\xi_0$ defined in  \eqref{3-5-2-es2-6-1}, it follows from \eqref{1-4th} and \eqref{3-5-2-es2-6-1-2} that
\begin{equation}\label{3-5-2-es3-7}
\begin{aligned}
\|\Upsilon_{4}^{(k)}\|_{C^{0}(D)}
&\leq M_1\sigma-\frac{99\me\sigma}{F_2(r)}+\frac{(1+\mf_1(\xi))\sigma}{F_2(r)}=\chi_{1}M_{1}\sigma.\end{aligned}\end{equation}
Similarly,   integrating \eqref{3-5-t} along the first characteristic curve $t_{1}^{(k)}(s;t,r)$ from $r_1$ to $r$ obtains
 \begin{equation}\label{3-5-2-es3-8}
 \begin{aligned}
\|\Upsilon_{1}^{(k)}\|_{C^{0}(D)}&\leq \chi_{1}M_{1}\sigma+\frac{3\sigma}{F_1(r)}+\frac{2\mf_1(\xi)\sigma}{F_1(r)}
\leq \chi_{1}M_{1}\sigma+\frac{5\sigma}{F_1(r)}= \chi_{2}M_{1}\sigma.
\end{aligned}
\end{equation}
These, together with \eqref{3-6}-\eqref{3-5}, \eqref{3-11-1},
\eqref{3-6-2-es2-7}-\eqref{3-5-2-es2-1-1}, yield that
\begin{equation}\label{3-5-2-es3-9}
\begin{cases}
\|\p_r\Psi_{4}^{(k)}\|_{C^{0}(D)}\leq\mk\chi_{1}M_{1}\sigma+C_6(1+\chi_{1}+\chi_{2})M_{1}\sigma
+C_5(M_{1}\sigma)^2< M_2\sigma,\\
\|\p_r\Psi_{1}^{(k)}\|_{C^{0}(D)}\leq\mk\chi_{2}M_{1}\sigma+C_6(1+\chi_{1}+\chi_{2})M_{1}\sigma
+C_5(M_{1}\sigma)^2< M_2\sigma,\\
\end{cases}
\end{equation}
where $M_2>2\mk M_{1}+C_6(3M_{1}+1)$ and $ C_6$ is a positive constant depending only on $ (\gamma,r_0,r_1,\rho_0,U_{1,0},$\\$U_{2,0},S_0)$.
\par In the following, by integrating \eqref{3-7-t} and \eqref{3-8-t} along the characteristic curves $t_{2}^{(k)}(s;t,r)$ and  $t_{3}^{(k)}(s;t,r)$ from $r_0$ to $r$, respectively, one derives
\begin{equation}\label{a1}
\begin{aligned}
  r\Upsilon_2^{(k)}(t,r)&= K_2\Upsilon_1^{(k-1)}(t_{2}^{(k)}(r_0;t,r),r_0) +
\mh_2'(t_{2}^{(k)}(r_0;t,r))\\
&\quad-\int_{r_0}^rs\bigg(\sum_{i=1}^4\frac{\p\mu_{4}(\bm\Psi^{(k-1)}+\bar{\bf R})}{\p \Psi_i^{(k-1)}}\Upsilon_i^{(k-1)}\bigg)\Upsilon_2^{(k)}(t_{2}^{(k)}(s;t,r),s)\de s,
\end{aligned}
\end{equation}
and
\begin{equation}\label{a2}
\begin{aligned}
 \Upsilon_3^{(k)}(t,r)&= K_3\Upsilon_1^{(k-1)}(t_{3}^{(k)}(r_0;t,r),r_0) +\mh'_3(t_{3}^{(k)}(r_0;t,r))\\
&\quad-\int_{r_0}^r\bigg(\sum_{i=1}^4\frac{\p\mu_{3}(\bm\Psi^{(k-1)}+\bar{\bf R})}{\p \Psi_i^{(k-1)}}\Upsilon_i^{(k-1)}\bigg)\Upsilon_3^{(k)}(t_{3}^{(k)}(s;t,r),s)\de s.
\end{aligned}
\end{equation}
With the aid of \eqref{1-15} and \eqref{3-11-1}, one gets
\begin{equation*}
\begin{aligned}
 &-\bigg(\sum_{i=1}^4\frac{\p\mu_{i}(\bm\Psi^{(k-1)}+\bar{\bf R})}{\p \Psi_i^{(k-1)}}\Upsilon_i^{(k-1)}\bigg)
 \frac2{(\Psi_1^{(k-1)}+\bar R_1+\Psi_4^{(k-1)}+\bar R_4)^2}(\Upsilon_1^{(k-1)}
+\Upsilon_4^{(k-1)})\leq 4\mk^2M_1\sigma.
\end{aligned}
\end{equation*}
 By  the Gronwall inequality, there holds
\begin{equation}\label{3-7-t-1-es}
\|\Upsilon_{2}^{(k)}\|_{C^{0}(D)}\leq e^{4\mk^2M_1\sigma(r_1-r_0)}(1+ KM_{1}) \sigma ,\ \
\|\Upsilon_{3}^{(k)}\|_{C^{0}(D)}\leq e^{4\mk^2M_1\sigma(r_1-r_0)}(1+KM_{1})\sigma.\\
\end{equation}Define
$$g_1(\sigma)=e^{4\mk^2M_1\sigma(r_1-r_0)}(1+KM_{1}).$$
    By the continuity of the exponential function $g_1(\sigma)$ and $g_1(0)=(1+KM_{1})<M_1$, there exists a sufficiently small $ \sigma_3^\ast>0 $ such that for any $\sigma\in(0,\sigma_3^\ast)$,
\begin{equation*}
 e^{4\mk^2M_1\sigma(r_1-r_0)}(1+KM_{1})\sigma\leq M_1\sigma.
\end{equation*}
With the above estimates, one can further use \eqref{3-6} and \eqref{3-7} to obtain
\begin{equation}\label{3-7-t-2-es}
\|\p_r\Psi_{2}^{(k)}\|_{C^{0}(D)}\leq \mk M_1\sigma<  M_2\sigma,\ \
\|\p_r\Psi_{3}^{(k)}\|_{C^{0}(D)}\leq  \mk M_1\sigma  <  M_2\sigma.\\
\end{equation}
 Therefore,
set
\begin{equation}\label{3-7-8-es2} \sigma_4^\ast=\min\{\sigma_3^\ast,\sigma_2^\ast\}.
\end{equation}
For any $ \frac{r_1}{r_0}=1+\xi$ with $ \xi\in(0,\xi_0) $ and $ \sigma\in (0,\sigma_4^\ast ), $   the estimates \eqref{3-11-2} and \eqref{3-11-2-r} and  can be easily checked by using \eqref{3-6-2-es2-7}-\eqref{3-7-1-es1}, \eqref{3-5-2-es3-7}-\eqref{3-5-2-es3-9} and \eqref{3-7-t-1-es}-\eqref{3-7-t-2-es}.
\par
 { \bf Step 3. Proof of \eqref{3-15-2}.} Firstly, one gets directly from \eqref{1-16}, \eqref{3-9} and \eqref{3-6-2-es2-7}-\eqref{3-5-2-es2-1-1} that
 \begin{equation}\label{3-15-2-es1}
\begin{cases}
\begin{aligned}
&\|\Psi_{1}^{(1)}-\Psi_{1}^{(0)}\|_{C^{0}(D)}\leq \chi_2M_1\sigma \leq M_1\sigma \eta,\\
&\|\Psi_{4}^{(1)}-\Psi_{4}^{(0)}\|_{C^{0}(D)}\leq \chi_1M_1\sigma \leq M_1\sigma \eta,\\
&\|\Psi_{2}^{(1)}-\Psi_{2}^{(0)}\|_{C^{0}(D)}=\bigg\|\frac{r_0}r \mh_2\bigg\|_{C^{0}(D)}\leq\sigma   \leq M_1\sigma \eta,\\
&\|\Psi_{3}^{(1)}-\Psi_{3}^{(0)}\|_{C^{0}(D)}=\|\mh_3
\|_{C^{0}(D)}\leq \sigma \leq M_1\sigma  \eta,\\
\end{aligned}
\end{cases}
\end{equation}
where
$$\max\{\chi_1,\chi_2,{1- K},K\}< \eta<1.$$
 Next, we prove \eqref{3-15-2} for $ k\geq 2$. By  \eqref{3-6-1}-\eqref{3-5-1} and \eqref{3-7}-\eqref{3-8}, it holds that
\begin{equation*}
\begin{aligned}
	&\partial_r \bigg(F_2(\Psi_4^{(k)}-\Psi_4^{(k-1)})\bigg)+\mu_{4}( \bm\Psi^{(k-1)}+\bar{\bf R})\partial_t \bigg(F_2(\Psi_4^{(k)}-\Psi_4^{(k-1)})\bigg)\\
&=\frac{F_2}r\bigg(
(\ma_{42}-\ma_{41})(r)(\Psi_1^{(k-1)}-\Psi_1^{(k-2)}) +\ma_{43}(r) (\Psi_2^{(k-1)}-\Psi_2^{(k-2)})-\ma_{44}(r)(\Psi_3^{(k-1)}\\
&\quad -\Psi_3^{(k-2)})\bigg)+F_2\bigg(\mu_{4}( \bm\Psi^{(k-1)}+\bar{\bf R}) -\mu_4(\bar{\bf R})\bigg)\bigg(\mm_{41}(r)(\Psi_1^{(k-1)}-\Psi_1^{(k-2)})-\mm_{44}(r)
(\Psi_4^{(k-1)}\\
&\quad-\Psi_4^{(k-2)})+\mm_{42}(r)
 (\Psi_2^{(k-1)}-\Psi_2^{(k-2)})-\mm_{43}(r)(\Psi_3^{(k-1)}-\Psi_3^{(k-2)})\bigg) \\
&\  \ +F_2\mu_4( \bm\Psi^{(k-1)}+\bar{\bf R})\bigg(\bigg(\frac{(\Psi_2^{(k-1)})^2}{r}  - \frac{\gamma-1}{8r}\bigg((\Psi_4^{(k-1)})^2 - (\Psi_1^{(k-1)})^2\bigg)-\frac{\bar c  }{4\gamma r }\Psi_3^{(k-1)}(\Psi_1^{(k-1)}\\
&\quad + \Psi_4^{(k-1)})\bigg)-\bigg(\frac{(\Psi_2^{(k-2)})^2}{r} - \frac{\gamma-1}{8r}\bigg((\Psi_4^{(k-2)})^2 - (\Psi_1^{(k-2)})^2\bigg)-\frac{\bar c  }{4\gamma r }\Psi_3^{(k-2)}(\Psi_1^{(k-2)}+ \Psi_4^{(k-2)})\bigg)\bigg)\\
&\ \ +F_2\bigg(\mu_{4}( \bm\Psi^{(k-1)}+\bar{\bf R})-\mu_{4}(\bm\Psi^{(k-2)}+\bar{\bf R})\bigg)\bigg(-\partial_t \Psi_4^{(k-1)}+\mm_{41}(r)\Psi_1^{(k-2)}\\ &\quad -\mm_{44}(r)\Psi_4^{(k-2)}+\mm_{42}(r)
 \Psi_2^{(k-2)} -\mm_{43}(r)\Psi_3^{(k-2)}+\frac{(\Psi_2^{(k-2)})^2}{r} - \frac{\gamma-1}{8r}\bigg((\Psi_4^{(k-2)})^2 \\
&\quad - (\Psi_1^{(k-2)})^2\bigg)-\frac{\bar c  }{4\gamma r }\Psi_3^{(k-2)}(\Psi_1^{(k-2)}+ \Psi_4^{(k-2)})+\p_t\Psi_3^{(k-2)}\bigg(\frac{{\Psi}_4^{(k-1)} - {\Psi}_1^{(k-1)}}{4\gamma}+ \frac{\bar c\Psi_3^{(k-1)}}{2\gamma}\bigg)\bigg)\\ \end{aligned}
\end{equation*}\begin{equation}\label{3-6-1-d}
\begin{aligned}
&\  +F_2\bigg(\frac{{\Psi}_4^{(k-1)} - {\Psi}_1^{(k-1)}}{4\gamma}+ \frac{\bar c\Psi_3^{(k-1)}}{2\gamma}\bigg)\bigg(\partial_r (\Psi_3^{(k-1)}-\Psi_3^{(k-2)}) + \mu_4( \bm\Psi^{(k-1)}+\bar{\bf R}) \partial_t (\Psi_3^{(k-1)}-\Psi_3^{(k-2)})\bigg)\\
&\ +F_2\bigg(\frac{{\Psi}_4^{(k-1)}-{\Psi}_4^{(k-2)}+{\Psi}_1^{(k-1)} - {\Psi}_1^{(k-2)}}{4\gamma}+ \frac{\bar c\Psi_3^{(k-1)}-\bar c\Psi_3^{(k-2)}}{2\gamma}\bigg)\bigg(\partial_r \Psi_3^{(k-2)} + \mu_4( \bm\Psi^{(k-2)}+\bar{\bf R}) \partial_t \Psi_3^{(k-2)}\bigg),\\
\end{aligned}
\end{equation} and
\begin{equation}\label{3-6-2-d}
\begin{aligned}
	&\partial_r \bigg(F_1(\Psi_1^{(k)}-\Psi_1^{(k-1)})\bigg)+\mu_{1}( \bm\Psi^{(k-1)}+\bar{\bf R})\partial_t \bigg(F_1(\Psi_1^{(k)}-\Psi_1^{(k-1)})\bigg)\\
&=\frac{F_1}r\bigg(
-\ma_{12}(r)(\Psi_1^{(k-1)} -\Psi_1^{(k-2)})+(\ma_{13}-\ma_{11}-\ma_{12})(r)
(\Psi_4^{(k)}-\Psi_4^{(k-1)})
\\
&\qquad\quad +\ma_{14}(r)(\Psi_2^{(k-1)}-\Psi_2^{(k-2)})+\ma_{15}(r)(\Psi_3^{(k-1)}-\Psi_3^{(k-2)})
\\
& \ \ +F_1\bigg(\mu_{1}( \bm\Psi^{(k-1)}+\bar{\bf R}) -\mu_1(\bar{\bf R})\bigg)\bigg(-\mm_{11}(r)(\Psi_1^{(k-1)}-\Psi_1^{(k-2)})
 +\mm_{14}(r)
(\Psi_4^{(k)}-\Psi_4^{(k-1)})\\
&\qquad\quad+\mm_{12}(r)
 (\Psi_2^{(k-1)}-\Psi_2^{(k-2)})+\mm_{13}(r)(\Psi_3^{(k-1)}-\Psi_3^{(k-2)})\bigg)\\
 &\ \ +F_1\mu_1( \bm\Psi^{(k-1)}+\bar{\bf R})\bigg(\bigg(\frac{(\Psi_2^{(k-1)})^2}{r} +\frac{\gamma-1}{8r}\bigg((\Psi_4^{(k)})^2 - (\Psi_1^{(k-1)})^2\bigg)+\frac{\bar c  }{4\gamma r }\Psi_3^{(k-1)}(\Psi_1^{(k-1)}\\
&\qquad + \Psi_4^{(k)})\bigg)-\bigg(\frac{(\Psi_2^{(k-2)})^2}{r} + \frac{\gamma-1}{8r}\bigg((\Psi_4^{(k-1)})^2 - (\Psi_1^{(k-2)})^2\bigg)+\frac{\bar c  }{4\gamma r }\Psi_3^{(k-2)}(\Psi_1^{(k-2)}+ \Psi_4^{(k-1)})\bigg)\bigg)\\
&\ \ +F_1\bigg(\mu_{1}( \bm\Psi^{(k-1)}+\bar{\bf R})-\mu_{1}(\bm\Psi^{(k-2)}+\bar{\bf R})\bigg)\bigg(-\partial_t \Psi_1^{(k-1)}-\mm_{11}(r)\Psi_1^{(k-2)}+\mm_{14}(r)\Psi_4^{(k-1)}\\ &\qquad +\mm_{12}(r)
 \Psi_2^{(k-2)} +\mm_{13}(r)\Psi_3^{(k-2)}+\frac{(\Psi_2^{(k-2)})^2}{r} + \frac{\gamma-1}{8r}\bigg((\Psi_4^{(k-1)})^2 - (\Psi_1^{(k-2)})^2\bigg)\\
&\qquad+\frac{\bar c  }{4\gamma r }\Psi_3^{(k-2)}(\Psi_1^{(k-2)}+ \Psi_4^{(k-1)})-\p_t\Psi_3^{(k-2)}\bigg(\frac{{\Psi}_4^{(k-1)} - {\Psi}_1^{(k-1)}}{4\gamma}+ \frac{\bar c\Psi_3^{(k-1)}}{2\gamma}\bigg)\bigg)\\
&\ \  -F_1\bigg(\frac{{\Psi}_4^{(k-1)} - {\Psi}_1^{(k-1)}}{4\gamma}+ \frac{\bar c\Psi_3^{(k-1)}}{2\gamma}\bigg)\bigg(\partial_r (\Psi_3^{(k-1)}-\Psi_3^{(k-2)}) + \mu_1( \bm\Psi^{(k-1)}+\bar{\bf R}) \partial_t (\Psi_3^{(k-1)}-\Psi_3^{(k-2)})\bigg)\\
&\ \ -F_1\bigg(\frac{{\Psi}_4^{(k-1)}-{\Psi}_4^{(k-2)}+{\Psi}_1^{(k-1)} - {\Psi}_1^{(k-2)}}{4\gamma}+ \frac{\bar c\Psi_3^{(k-1)}-\bar c\Psi_3^{(k-2)}}{2\gamma}\bigg)\bigg(\partial_r \Psi_3^{(k-2)}+ \mu_1( \bm\Psi^{(k-2)}+\bar{\bf R}) \partial_t \Psi_3^{(k-2)}\bigg),\\
\end{aligned}
\end{equation} and
\begin{equation}\label{3-6-3-d}
\begin{aligned}
	&\partial_r \bigg(r(\Psi_2^{(k)}-\Psi_2^{(k-1)})\bigg)+\mu_{2}( \bm\Psi^{(k-1)}+\bar{\bf R})\partial_t \bigg(r(\Psi_2^{(k)}-\Psi_2^{(k-1)})\bigg)\\
&=-\bigg(\mu_2(\bm\Psi^{(k-1)}+\bar{\bf R})-\mu_2(\bm\Psi^{(k-2)}+\bar{\bf R})\bigg)\partial_t (r\Psi_2^{(k-1)}),
\end{aligned}
\end{equation} and
\begin{equation}\label{3-6-4-d}
\begin{aligned}
	&\partial_r \bigg(\Psi_3^{(k)}-\Psi_3^{(k-1)}\bigg)+\mu_{3}( \bm\Psi^{(k-1)}+\bar{\bf R})\partial_t \bigg(\Psi_3^{(k)}-\Psi_3^{(k-1)}\bigg)\\
&=-\bigg(\mu_3(\bm\Psi^{(k-1)}+\bar{\bf R})-\mu_3(\bm\Psi^{(k-2)}+\bar{\bf R})\bigg)\partial_t \Psi_3^{(k-1)}.
\end{aligned}
\end{equation}
Furthermore, at the boundary,  one has
\begin{equation}\label{3-66-bu}
\begin{cases} r=r_0: ({\Psi}_{4}^{(k)}-{\Psi}_{4}^{(k-1)})(t,r_0) =   K_4 ({\Psi}_1^{(k-1)}-{\Psi}_1^{(k-2)})(t,r_0),\\
r=r_1:(\Psi_{1}^{(k)}-\Psi_{1}^{(k-1)})
(t,r_1)=-(\Psi_{4}^{(k)}-\Psi_{4}^{(k-1)})(t,r_1).\\\end{cases}
\end{equation}
Then  integrating \eqref{3-6-1-d} along $t_{4}^{(k)}(s;t,r)$ from $r_0$ to $r$ together with \eqref{3-11-1} and \eqref{3-15-1} yields that
\begin{equation}\label{3-15-2-es3}
\begin{aligned}
\|\Psi_4^{(k)}-\Psi_4^{(k-1)}\|_{C^{0}(D)}&\leq \frac{| K_4|M_1\sigma \eta^{k-1}}{F_2(r)}
+\frac{M_1\sigma \eta^{k-1}(F_2(r)-1)+C_7 M_1\sigma M_1\sigma \eta^{k-1}}{F_2(r)}\\
&\quad+\frac{A_0\me M_1\sigma \eta^{k-1} }{F_2(r)}\ln \frac{r_1}{r_0} .\\
\end{aligned}
\end{equation}
 Here  $C_7$ is a positive constant depending only on $ (\gamma,r_0,r_1,\rho_0,U_{1,0},U_{2,0},S_0)$. Set \begin{equation}\label{3-7-1-es2-es1-es}
 \sigma_5^\ast=\min\bigg\{\sigma_4^\ast,\frac1{2(C_7M_1^2+1)}\bigg\},\end{equation}
 where $ \sigma_4^\ast$ is given in \eqref{3-7-8-es2}.
Then for any $\sigma\in(0,\sigma_5^\ast)$  and $ \xi\in(0,\xi_0)$   with   $\xi_0$ defined by  \eqref{3-5-2-es2-6-1}, the estimates \eqref{3-5-2-es2-6-1-2} and \eqref{3-15-2-es3} imply that
\begin{equation}\label{3-5-2-es3-es-2}
\begin{aligned}
&\|\Psi_4^{(k)}-\Psi_4^{(k-1)}\|_{C^{0}(D)}
\leq M_1\sigma \eta^{k-1}-\frac{100\me\sigma \eta^{k-1}}{F_2(r)}+\frac{\sigma\eta^{k-1}}{F_1(r)}+\frac{A_0\me M_1\sigma \eta^{k-1}}{F_2(r)}\ln \frac{r_1}{r_0} \\
&\leq M_1\sigma\eta^{k-1}-\frac{99\me\sigma\eta^{k-1}}{F_2(r)}+\frac{\mf_1(\xi)\sigma \eta^{k-1}}{F_2(r)}
\leq\chi_{1}M_{1}\sigma \eta^{k-1}< M_{1}\sigma \eta^{k}.
\end{aligned}
\end{equation}
Similarly, we  integrate \eqref{3-6-2-d} along $t_{1}^{(k)}(s;t,r)$ from $r_1$ to $r$ together with the second equation in \eqref{3-66-bu} to get
\begin{equation}\label{3-5-2-es3-es-3}
\begin{aligned}
&\|\Psi_1^{(k)}-\Psi_1^{(k-1)}\|_{C^{0}(D)}\\
&\leq \frac{\chi_{1}M_{1}\sigma\eta^{k-1}}{F_1(r)}
+\frac{\chi_{1}M_{1}\sigma\eta^{k-1}(F_1(r)-1)+C_7\bigg( M_1\sigma M_1\sigma \eta^{k-1}+(\chi_{1}M_{1}\sigma \eta^{k-1})^2\bigg)}{F_1(r)}\\
&\quad+\frac{A_0\me}{F_1(r)}\ln \frac{r_1}{r_0} M_1\sigma  \eta^{k-1}\leq\chi_{1}M_{1}\sigma \eta^{k-1}+\frac{2\sigma\eta^{k-1}}{F_1(r)}+\frac{A_0\me M_1\sigma  \eta^{k-1} }{F_1(r)}\ln \frac{r_1}{r_0} \\
&\leq \chi_{1}M_{1}\sigma  \eta^{k-1}+\frac{3\sigma  \eta^{k-1}}{F_1(r)}\leq \chi_{2}M_{1}\sigma  \eta^{k-1}< M_{1}\sigma  \eta^{k}.
\end{aligned}
\end{equation}
\par Next, at the boundary $r=r_0$, one has \begin{equation}\begin{cases}
(\Psi_{2}^{(k)}-\Psi_2^{(k-1)})(t,r_0)=K_{2}(\Psi_{1}^{(k-1)}
-\Psi_{1}^{(k-2)})(t,r_0), \\
(\Psi_3^{(k)}-\Psi_3^{(k-1)})(t,r_0)=K_3(\Psi_{1}^{(k-1)}
-\Psi_{1}^{(k-2)})(t,r_0).\\ \end{cases}
\end{equation}
Then by integrating \eqref{3-6-3-d} and \eqref{3-6-4-d} along $t_{2}^{(k)}(s;t,r)$ and $t_{3}^{(k)}(s;t,r)$ from $r_0$ to $r$, respectively, there holds
\begin{equation}\label{3-7-t-1-es-1}
\begin{cases}
\|\Psi_2^{(k)}-\Psi_2^{(k-1)}\|_{C^{0}(D)}\leq (K+C_7 M_1\sigma) M_1\sigma \eta^{k-1}   ,\\
\|\Psi_3^{(k)}-\Psi_3^{(k-1)}\|_{C^{0}(D)}\leq  (K+C_7 M_1\sigma) M_1\sigma \eta^{k-1}   .\\
\end{cases}
\end{equation}
 Set
\begin{equation}\label{3-7-1-es2-es1-es2}
 \sigma_6^\ast=\min\bigg\{\sigma_5^\ast,\frac{\eta-K}{C_7M_1+1}\bigg\},
 \end{equation} where $ \sigma_5^\ast$ is given in \eqref{3-7-1-es2-es1-es}.
 Then for any $\sigma\in(0,\sigma_6^\ast)$, one derives from \eqref{3-7-t-1-es-1} that
 \begin{equation*}
\|\Psi_2^{(k)}-\Psi_2^{(k-1)}\|_{C^{0}(D)}\leq M_1\sigma \eta^{k}   ,\ \
\|\Psi_3^{(k)}-\Psi_3^{(k-1)}\|_{C^{0}(D)}\leq  M_1\sigma \eta^{k}   .\\
\end{equation*} Hence we proved \eqref{3-15-2}.
 \par { \bf Step 4. Proof of \eqref{3-16-2}-\eqref{3-17-2}.} We will show the modulus of continuity for $\Psi_{i}^{(k)}(i=1,2,3,4)$ on the temporal direction \eqref{3-16-2}, which is very important to prove \eqref{3-17-2}. For $ \delta\in(0,1)$, set
 \begin{equation*}
 \begin{aligned}
 \mn(\delta)&=\frac{24\me}{1-K}[\mk+1]\bigg(\sqrt{\sigma}\delta+\varpi(\delta|\mh'_1)
+\varpi(\delta|\mh'_2)+\varpi(\delta|\mh'_3) +\varpi(\delta|\mh'_4)\bigg),
 \end{aligned}\end{equation*}
 where $K=\max\{|K_2|,|K_3|,|K_4|\}.$ Note that $\varpi(\delta|\mh'_{i})(i=1,2,3,4)$ are monotonically increasing, bounded and continuous concave functions of $\delta$ and $\mathop{\lim}\limits_{\delta\rightarrow0^{+}}\varpi(\delta|\mh'_{i})=0$. Thus $\mn(\delta)$ has the same features and
$$\mathop{\lim}\limits_{\delta\rightarrow0^{+}}\mn(\delta)=0.$$
At the boundary $r=r_0$, for any given $t^{1},t^{2}\in\mathbb{R}$ with $|t^{1}-t^{2}|\leq\delta\ll1$, one has
$$
|\Upsilon_{4}^{(k)}(t^{1},r_0)-\Upsilon_{4}^{(k)}(t^{2},r_0)|
\leq \left|\mh'_4(t^{1})-\mh'_4(t^{2})\right|+|K_4|\left|\Upsilon_{1}^{(k-1)}(t^{1},r_0)
-\Upsilon_{1}^{(k-1)}(t^{2},r_0)\right|.
$$
This, together with \eqref{3-16-1}, yields that
\begin{equation}\label{3-7-1-es2-es4}
\begin{aligned}
\varpi\bigg(\delta|\Upsilon_{4}^{(k)}(\cdot,r_0)\bigg)&\leq \varpi(\delta|\mh'_4)+|K_4|\varpi\bigg(\delta|\Upsilon_{1}^{(k-1)}(\cdot,r_0)\bigg)
<\frac{1-K}{24\me[\mathcal{K}+1]}\mn(\delta)+\frac{|K_4|}{8[\mathcal{K}+1]}
\mn(\delta)\\
&<\frac{1-K}{24\me[\mathcal{K}+1]}\mn(\delta)+\frac{K}{8[\mathcal{K}+1]}
\mn(\delta)<\bigg(\frac{1}{24[\mathcal{K}+1]}
+\frac{K}{12[\mathcal{K}+1]}\bigg)\mn(\delta)\\
&<\frac{\mn(\delta)}{8[\mathcal{K}+1]}.
\end{aligned}\end{equation}
In the domain $D$, for any $r\in[r_0,r_1]$ and $t^{1},t^{2}\in\mathbb{R}$ with $|t^{1}-t^{2}|\leq\delta$, by the definition of the characteristic curve, there holds
$$
t_{4}^{(k)}(z;t^{*},r)=\int_{r}^{z}\mu_{4}
( \bm\Psi^{(k-1)}+\bar{\bf R})(t_{4}^{(k)}(s;t^\ast,r),s)\de s+t^{*}.
$$
Then one has
\begin{equation*}
\begin{aligned}
&|t_{4}^{(k)}(z;t^{1},r)-t_{4}^{(k)}(z;t^{2},r)|\\
\leq&|t^{1}-t^{2}|+\int_{r}^{z}
\bigg|\mu_{4}
( \bm\Psi^{(k-1)}+\bar{\bf R})(t_{4}^{(k)}(s;t^1,r),s)-\mu_{4}
( \bm\Psi^{(k-1)}+\bar{\bf R})(t_{4}^{(k)}(s;t^2,r),s)\bigg|\de s\\
\leq&|t^{1}-t^{2}|+\int_{r}^{z}\sum_{i=1}^4
\bigg|\frac{\partial\mu_{4}}{\p\Psi_i^{(k-1)}}\bigg|  \bigg|\Upsilon_i^{(k-1)}\bigg| \bigg|
t_{4}^{(k)}(s;t^{1},r)
-t_{4}^{(k)}(s;t^{2},r)\bigg|\de s.
\end{aligned}\end{equation*}
By the Gronwall inequality and \eqref{3-11-1}, one derives that for sufficiently small $ \sigma$,
\begin{align}
|t_{4}^{(k)}(s;t^{1},r)
-t_{4}^{(k)}(s;t^{2},r)\bigg|\leq e^{C\sigma}|t^{1}-t^{2}|\leq(1+\sqrt{\sigma})\delta.\label{c55}
\end{align}
Noticing the concavity of $\mn(\delta)$, there holds
$$\frac{\mn((1+\sqrt{\sigma})\delta)}{1+\sqrt{\sigma}}
+\frac{\sqrt{\sigma}\mn(0)}{1+\sqrt{\sigma}}\leq \mn(\delta).$$
 This implies
$$\mn((1+\sqrt{\sigma})\delta)\leq(1+\sqrt{\sigma})\mn(\delta).$$
Thus it follows from \eqref{3-16-1} that
\begin{equation}\label{c56}
\begin{aligned}
&|\Upsilon_{i}^{(k-1)}(t_{4}^{(k)}(z;t^{1},r),z)-\Upsilon_{i}^{(k-1)}
(t_{4}^{(k)}(z;t^{2},r),z)|\\
\leq &\frac{\mn((1+\sqrt{\sigma})\delta)}{8[\mathcal{K}+1]}
\leq\frac{(1+\sqrt{\sigma})\mn(\delta)}{8[\mathcal{K}+1]}, \ i=1,2,3,4.\end{aligned}\end{equation}
Integrating \eqref{3-6-t} along $t=t_{4}^{(k)}(z;t^{1},r)$ and $t=t_{4}^{(k)}(z;t^{2},r)$ respectively and subtracting the two results, one gets from \eqref{3-11-1}, \eqref{3-7-1-es2-es4}-\eqref{c56} and the Gronwall's inequality that
\begin{align}
|\Upsilon_{4}^{(k)}(t^{1},r)-\Upsilon_{4}^{(k)}(t^{2},r)|\leq\frac{C_{8}\mn(\delta)}{8[\mathcal{K}+1]}
,\label{c57}
\end{align}
where $C_{8}>0$ is a constant independent of $k$. Then we  use the integral expression \eqref{3-6-t} of $F_{2}(r)\Upsilon_4^{(k)}(t,r)$ together with  the estimates \eqref{3-11-1}, \eqref{3-16-2} \eqref{3-5-2-es2-6-1-2} and \eqref{3-7-1-es2-es4}-\eqref{c57} to obtain that
\begin{equation}\label{c58}
\begin{aligned}
 \varpi\bigg(\delta|\Upsilon_{4}^{(k)}(\cdot,r)\bigg)&\leq \frac{\mn(\delta)(1+\sqrt{\sigma})}{F_2(r)}\bigg(\frac{1}{24[\mathcal{K}+1]}
+\frac{K}{12[\mathcal{K}+1]}\bigg)\\
&\quad+\frac{F_2(r)-1}{F_2(r)}\frac{(1+\sqrt{\sigma})\mn(\delta)}
{8[\mathcal{K}+1]}+\frac{C_9M_1\sigma}{F_2(r)}\bigg(
\frac{(1+\sqrt{\sigma})\mn(\delta)}{8[\mathcal{K}+1]}+(1+\sqrt{\sigma})\delta\bigg)\\
&\quad+\frac{A_0\me M_1 }{F_2(r)M_1}\ln \frac{r_1}{r_0} \frac{(1+\sqrt{\sigma})\mn(\delta)}
{8[\mathcal{K}+1]}\\
&\leq \frac{(1+\sqrt{\sigma})\mn(\delta)}
{8[\mathcal{K}+1]}-\frac{\mn(\delta)(1+\sqrt{\sigma})}{\me}
\frac{1-K}{12[\mathcal{K}+1]}+C_9M_1\sigma\frac{(1+\sqrt{\sigma})
\mn(\delta)}{8[\mathcal{K}+1]}\\
&\quad+C_9M_1\sigma(1+\sqrt{\sigma})+ \frac{(1-K)\mf_1(\xi) }{100\me}\frac{(1+\sqrt{\sigma})\mn(\delta)}
{8[\mathcal{K}+1]}\\
&< \frac{(1+\sqrt{\sigma})\mn(\delta)}
{8[\mathcal{K}+1]}+\frac{\mn(\delta)(1+\sqrt{\sigma})}{\me}
\frac{(1-K)}{[\mathcal{K}+1]}\bigg(\frac1{800}-\frac1{12}\bigg)\\
&\quad+C_9M_1\sigma\frac{(1+\sqrt{\sigma})
\mn(\delta)}{8[\mathcal{K}+1]}+C_9M_1\sigma(1+\sqrt{\sigma})\delta\\
&< \frac{(1+\sqrt{\sigma})\mn(\delta)}
{8[\mathcal{K}+1]}-\frac{\mn(\delta)(1+\sqrt{\sigma})}{\me}
\frac{(1-K)}{13[\mathcal{K}+1]}\\
&\quad+C_9M_1\sigma\frac{(1+\sqrt{\sigma})
\mn(\delta)}{8[\mathcal{K}+1]}+C_9M_1(\sigma+\sqrt\sigma)\frac{\mn(\delta)}
{[\mathcal{K}+1]}.\\
\end{aligned}\end{equation}
Here $C_9>0$  is a constant depending only on $ (\gamma,r_0,r_1,\rho_0,U_{1,0},U_{2,0},S_0)$. Define
$$g_2(\sigma)= \frac{\sqrt{\sigma}}
{8}-\frac{(1+\sqrt{\sigma})}{\me}
\frac{1-K}{13}+C_9M_1\sigma\frac{(1+\sqrt{\sigma})
}{8}+C_9M_1(\sigma+\sqrt\sigma).$$   It follows from the continuity of the  function $ g_2(\sigma)$ and $ g_2(0) =-\frac{1-K}{13\me}$ that there exists a sufficiently small $ \sigma_7^\ast>0 $ such that for any $\sigma\in(0,\sigma_7^\ast)$,
\begin{equation*}
g_2(\sigma)<g_2(0)+\frac{|g_2(0)|}4=-\frac{3(1-K)}{52\me}.
 \end{equation*} This, together with the estimate \eqref{c58}, implies that
 \begin{equation}\label{c58-1}
\begin{aligned}
 \varpi\bigg(\delta|\Psi_{4}^{(k)}(\cdot,r)\bigg)<
 \frac{\chi_3\mn(\delta)}
{8[\mathcal{K}+1]},
\end{aligned}\end{equation}where $\chi_3=1-\frac{24(1-K)}{52\me}$ is a constant satisfying $0<\chi_3<1$.
\par At the boundary $ r=r_1$,  one gets
\begin{equation}\label{cc58}
\begin{aligned}
\varpi\bigg(\delta|\Upsilon_{1}^{(k)}(\cdot,r_1)\bigg)&\leq \varpi(\delta|\mh'_1)+\varpi\bigg(\delta|\Upsilon_{4}^{(k)}(\cdot,r_1)\bigg)
<\frac{1-K}{24\me[\mathcal{K}+1]}\mn(\delta)+\frac{\chi_{3}}
{8[\mathcal{K}+1]}\mn(\delta)\\
&<\bigg(1-\frac{24(1-K)}{52\me}+\frac{1-K}{3\me}\bigg)
\frac{\mn(\delta)}{8[\mathcal{K}+1]}
=\bigg(1-\frac{5(1-K)}{39\me}\bigg)
\frac{\mn(\delta)}{8[\mathcal{K}+1]}\\
&<\frac{\mn(\delta)}{8[\mathcal{K}+1]}.
\end{aligned}\end{equation}
Furthermore, similar to \eqref{c55}-\eqref{c57}, one derives
 \begin{equation*}
\begin{aligned}
&\left|t_{1}^{(k)}(s;t^{1},r)
-t_{1}^{(k)}(s;t^{2},r)\right|\leq e^{C\sigma}|t^{1}-t^{2}|\leq(1+\sqrt{\sigma})\delta,\\
&\left|\Upsilon_{i}^{(k-1)}(t_{1}^{(k)}(z;t^{1},r),z)-\Upsilon_{i}^{(k-1)}
(t_{1}^{(k)}(z;t^{2},r),z)\right|
\leq\frac{(1+\sqrt{\sigma})\mn(\delta)}{8[\mathcal{K}+1]}, \ i=1,2,3,4,\\
&\left|\Upsilon_{1}^{(k)}(t^{1},r)
-\Upsilon_{1}^{(k)}(t^{2},r)\right|\leq\frac{C_{8}\mn(\delta)}{8[\mathcal{K}+1]}.
\end{aligned}\end{equation*}
Then  one can conclude from  the integral expression \eqref{3-5-t} of $F_{1}(r)\Upsilon_1^{(k)}(t,r)$   that
\begin{equation}\label{c59}
\begin{aligned}
 \varpi\bigg(\delta|\Upsilon_{1}^{(k)}(\cdot,r)\bigg)&\leq \frac{(1+\sqrt{\sigma})}{F_1(r)}\bigg(1-\frac{5(1-K)}{39\me}
 \bigg)
\frac{\mn(\delta)}{8[\mathcal{K}+1]}\\
&\quad+\frac{F_1(r)-1}{F_1(r)}\frac{(1+\sqrt{\sigma})\mn(\delta)}
{8[\mathcal{K}+1]}+\frac{C_9M_1\sigma}{F_1(r)}
\bigg(\frac{(1+\sqrt{\sigma})\mn(\delta)}{8[\mathcal{K}+1]}+(1+\sqrt{\sigma})\bigg)\\
&\quad+\frac{A_0\me M_1 }{F_1(r)M_1}\ln \frac{r_1}{r_0} \frac{(1+\sqrt{\sigma})\mn(\delta)}
{8[\mathcal{K}+1]}\\
&< \frac{(1+\sqrt{\sigma})\mn(\delta)}
{8[\mathcal{K}+1]}-\frac{\mn(\delta)(1+\sqrt{\sigma})}{\me}
\frac{5(1-K)}{312[\mathcal{K}+1]}
+C_9M_1\sigma\frac{(1+\sqrt{\sigma})
\mn(\delta)}{8[\mathcal{K}+1]}\\
&\quad+C_9M_1\sigma(1+\sqrt{\sigma})+ \frac{(1-K)\mf_1(\xi)) }{100\me}\frac{(1+\sqrt{\sigma})\mn(\delta)}
{8[\mathcal{K}+1]}\\
&<
 \frac{(1+\sqrt{\sigma})\mn(\delta)}
{8[\mathcal{K}+1]}-\frac{\mn(\delta)(1+\sqrt{\sigma})}{\me}
\frac{(1-K)}{68[\mathcal{K}+1]}
+C_9M_1\sigma\frac{(1+\sqrt{\sigma})
\mn(\delta)}{8[\mathcal{K}+1]}\\
&\quad+C_9M_1(\sigma+\sqrt\sigma)\frac{\mn(\delta)}
{[\mathcal{K}+1]}.
\\
\end{aligned}\end{equation}
Define
$$g_3(\sigma)= \frac{\sqrt{\sigma}}
{8}-\frac{(1+\sqrt{\sigma})}{\me}
\frac{1-K}{68}+C_9M_1\sigma\frac{(1+\sqrt{\sigma})
}{8}+C_9M_1(\sigma+\sqrt\sigma).$$   By the continuity of the  function $ g_3(\sigma)$ and $ g_3(0) =-\frac{1-K}{68\me}$, there exists a sufficiently small $ \sigma_8^\ast>0 $ such that for any $\sigma\in(0,\sigma_8^\ast)$,
\begin{equation*}
g_3(\sigma)<g_3(0)+\frac{|g_3(0)|}2=-\frac{1-K}{136\me}.
 \end{equation*} Thus one gets \begin{equation*}
\begin{aligned}
 \varpi\bigg(\delta|\Upsilon_{1}^{(k)}(\cdot,r)\bigg)<
 \frac{\chi_4\mn(\delta)}
{8[\mathcal{K}+1]},
\end{aligned}\end{equation*}where $\chi_4=1-\frac{1-K}{17\me}$ is a constant satisfying $0<\chi_4<1$.
\par In the following,
at the boundary $ r=r_0$,  one gets
\begin{equation}\label{cc60}
\begin{cases}
\begin{aligned}
&\varpi\bigg(\delta|\Upsilon_{2}^{(k)}(\cdot,r_0)\bigg)\leq \varpi(\delta|\mh'_2)+|K_2|\varpi\bigg(\delta|\Upsilon_{1}^{(k-1)}(\cdot,r_0)\bigg)
<\frac{1-K}{24\me[\mathcal{K}+1]}\mn(\delta)+\frac{|K_2|}{8[\mathcal{K}+1]}
\mn(\delta)\\
&<\bigg(\frac{1}{24[\mathcal{K}+1]}
+\frac{K}{12[\mathcal{K}+1]}\bigg)\mn(\delta)
<\frac{\mn(\delta)}{8[\mathcal{K}+1]},\\
&\varpi\bigg(\delta|\Upsilon_{3}^{(k)}(\cdot,r_0)\bigg)\leq \varpi(\delta|\mh'_3)+|K_3|\varpi\bigg(\delta|\Upsilon_{1}^{(k-1)}(\cdot,r_0)\bigg)
<\frac{1-K}{24\me[\mathcal{K}+1]}\mn(\delta)+\frac{|K_3|}{8[\mathcal{K}+1]}
\mn(\delta)\\
&<\bigg(\frac{1}{24[\mathcal{K}+1]}
+\frac{K}{12[\mathcal{K}+1]}\bigg)\mn(\delta)
<\frac{\mn(\delta)}{8[\mathcal{K}+1]}.\\
\end{aligned}\end{cases}\end{equation}
Then it follows from \eqref{3-7-t}-\eqref{3-8-t} together with the  Gronwall inequality  and the estimates \eqref{3-11-1}, \eqref{3-16-1} and \eqref{cc60} that
\begin{equation}\label{cc60-1}\begin{cases}
\begin{aligned}
\varpi\bigg(\delta|\Upsilon_{2}^{(k)}(\cdot,r)\bigg)&\leq \bigg(\bigg(\frac{1}{24[\mathcal{K}+1]}
+\frac{K}{12[\mathcal{K}+1]}\bigg)(1+\sqrt{\sigma})\mn(\delta)+
{C_{10}M_1\sigma}\bigg(
\frac{(1+\sqrt{\sigma})\mn(\delta)}{8[\mathcal{K}+1]}\\
&\qquad+(1+\sqrt{\sigma})\delta\bigg)\bigg)e^{C_{10}M_1\sigma(r_1-r_0)},\\
\varpi\bigg(\delta|\Upsilon_{3}^{(k)}(\cdot,r)\bigg)&\leq \bigg(\bigg(\frac{1}{24[\mathcal{K}+1]}
+\frac{K}{12[\mathcal{K}+1]}\bigg)(1+\sqrt{\sigma})\mn(\delta)+
{C_{10}M_1\sigma}\bigg(
\frac{(1+\sqrt{\sigma})\mn(\delta)}{8[\mathcal{K}+1]}\\
&\qquad+(1+\sqrt{\sigma})\delta\bigg)\bigg)e^{C_{10}M_1\sigma(r_1-r_0)}.\\
\end{aligned}\end{cases}\end{equation}
Here $C_{10}>0$  is a constant depending only on $ (\gamma,r_0,r_1,\rho_0,U_{1,0},U_{2,0},S_0)$. Then  there exists a sufficiently small $ \sigma_{9}^\ast>0 $ such that for any $\sigma\in(0,\sigma_{9}^\ast)$,
\begin{equation*}
\varpi\bigg(\delta|\Upsilon_{2}^{(k)}(\cdot,r)\bigg)\leq \frac{\mn(\delta)}
{8[\mathcal{K}+1]}, \ \ \varpi\bigg(\delta|\Upsilon_{3}^{(k)}(\cdot,r)\bigg)\leq \frac{\mn(\delta)}
{8[\mathcal{K}+1]}
.\end{equation*}
\par Finally, we prove \eqref{3-17-2}. We first consider the special case that two given points $(t^{1},r^{1})$ and $(t^{2},r^{2})$ with $|t^{1}-t^{2}|\leq\delta,|r^{1}-r^{2}|\leq\delta$ locate on the same characteristic curve $t=t_{4}^{(k)}(s;t,r)$, namely, $t^{2}=t_{4}^{(k)}(r^2;t^{1},r^{1})$. Using the similar method of \eqref{3-5-2-es3-7}, one gets
\begin{align}
\left|\Upsilon_{4}^{(k)}(t^{1},r^{1})-\Upsilon_{4}^{(k)}(t^{2},r^{2})\right|\leq C\sigma\delta\leq\frac{\mn(\delta)}{12}.\label{c59-d}
\end{align}
Then, for general two points $(t^{1},r^1)$ and $(t^{2},r^2)$ with $|t^{1}-t^{2}|\leq\delta,|r^{1}-r^{2}|\leq\delta$, we can choose a point $(t^{3},r^{1})$ locating on the  fourth characteristic curve passing through $(t^{2},r^{2})$, namely, $t^{3}=t_{4}^{(k)}(r^1;t^{2},r^{2})$. By \eqref{1-15} and \eqref{c24}, there hold
$$|t^{3}-t^{2}|\leq|\mu_{4}||r^{1}-r^{2}|\leq \mathcal{K}\delta \ \
{\rm{and}}
\  \ |t^{3}-t^{1}|\leq|t^{3}-t^{2}|+|t^{2}-t^{1}|\leq(\mathcal{K}+1)\delta.$$
Then  we combine estimates \eqref{c58-1} and \eqref{c59-d} to get
\begin{equation}\label{cc60-2}
\begin{aligned}
&\left|\Upsilon_{4}^{(k)}(t^{1},r^{1})-\Upsilon_{4}^{(k)}(t^{2},r^{2})\right|
\leq\bigg|\Upsilon_{4}^{(k)}(t^{1},r^{1})-\Upsilon_{4}^{(k)}
\bigg(\frac{[\mathcal{K}+1]t^{1}+t^{3}}{\mathcal{K}+1},r^{1}\bigg)\bigg|\\
&+\bigg|\Upsilon_{4}^{(k)}\bigg(\frac{[\mathcal{K}+1]t^{1}+t^{3}}{\mathcal{K}+1},r^{1}\bigg)
-\Upsilon_{4}^{(k)}\bigg(\frac{([\mathcal{K}+1]-1)t^{1}+2t^{3}}{\mathcal{K}+1},r^{1}\bigg)\bigg|\\
&+\ldots+\bigg|\Upsilon_{4}^{(k)}\bigg(\frac{t^{1}+[\mathcal{K}+1]t^{3}}
{\mathcal{K}+1},r^1\bigg)-\Upsilon_{4}^{(k)}(t^{3},r^{1})\bigg|
+\left|\Upsilon_{4}^{(k)}(t^{3},r^{1})-\Upsilon_{4}^{(k)}(t^{2},r^{2})\right|\\
\leq&\frac{[\mathcal{K}+1]+1}{8[\mathcal{K}+1]}
\mn(\delta)+\frac{\mn(\delta)}{12}
\leq\frac{\mn(\delta)}{3}.
\end{aligned}\end{equation}
The combination of \eqref{c59-d} and \eqref{cc60-2} leads to
\begin{align}
\varpi\bigg(\delta|\Upsilon_{4}^{(k)}\bigg)\leq\frac{\mn(\delta)}{3}.\label{c61}
\end{align}
Then it follows from \eqref{3-6} together with the estimates \eqref{1-15}  \eqref{3-11-2}, \eqref{c55} and  \eqref{c61} that
\begin{align}
\varpi\bigg(\delta|\p_r\Psi_{4}^{(k)}\bigg)\leq\frac{\mk\mn(\delta)}{2}.\label{c61-1}
\end{align}
In a similar way, one obtains
\begin{equation}\label{c62}
\begin{cases}
\begin{aligned}
&\varpi\bigg(\delta|\Upsilon_{1}^{(k)}\bigg)\leq\frac{\mn(\delta)}{3},\ \ \varpi\bigg(\delta|\p_r\Psi_{1}^{(k)}\bigg)\leq\frac{\mk\mn(\delta)}{2},\\
&\varpi\bigg(\delta|\Upsilon_{2}^{(k)}\bigg)\leq\frac{\mn(\delta)}{3},\ \ \varpi\bigg(\delta|\p_r\Psi_{2}^{(k)}\bigg)\leq\frac{\mk\mn(\delta)}{2},\\
&\varpi\bigg(\delta|\Upsilon_{3}^{(k)}\bigg)\leq\frac{\mn(\delta)}{3},\ \ \varpi\bigg(\delta|\p_r\Psi_{3}^{(k)}\bigg)\leq\frac{\mk\mn(\delta)}{2}.\\
\end{aligned}
\end{cases}
\end{equation}
Thus collecting \eqref{c61-1}-\eqref{c62} yields  \eqref{3-17-2}. Set
$$ \sigma_1=\min\{\sigma_6^\ast,\sigma_7^\ast, \sigma_8^\ast,\sigma_9^\ast\},$$ where  $\sigma_6^\ast$ is defined by \eqref{3-7-1-es2-es1-es2},  $\sigma_7^\ast$,  $\sigma_8^\ast$  and $\sigma_9^\ast$ are given in { \bf Step 4}.
For any $\sigma\in(0,\sigma_1)$, the estimates \eqref{3-10}-\eqref{3-16} hold. \end{proof}
\par Now, we  prove Theorem \ref{th1}. By  \eqref{3-15},
the sequence $\{{\bm\Psi}^{(k)}\}_{k=1}^{\infty}$ is a Cauchy sequence in $C^0(D)$, and thus converges to some  function ${\bm\Psi}^{(T)}$ uniformly in $C^0(D)$. Furthermore, \eqref{3-10} implies  ${\bm\Psi}^{(T)}$  is time-periodic. Then by \eqref{3-11} and \eqref{3-16} together with Arzela-Ascoli
theorem, we know that there exists a subsequence of $\{{\bm\Psi}^{(k)}\}_{k=1}^{\infty}$, which converges uniformly in $C^1(D)$. By the
uniqueness of the limit, we know that the whole original sequence $\{{\bm\Psi}^{(k)}\}_{k=1}^{\infty}$ converges to ${\bm\Psi}^{(T)}$ in $C^1(D)$.
Thus ${\bm\Psi}^{(T)}$ is a $C^1$ smooth function. Then we can take
${\bm\Psi}_{0}(r) ={ \bm\Psi}^{(T)}(0, r)$ to get the initial data, which clearly satisfies \eqref{1-19}.
\section{ $C^{0}$ stability of the time-periodic solution}\noindent
\par In this section, we will prove Theorem \ref{th2}.  By the results of \cite{Li,Yuw},  we know that for $ \sigma\in(0,\sigma_1)$, the   global existence of the classical solutions $\bm\Psi(t,r)$ and $ {\bm\Psi}^{(T)}(t,r)$  to the corresponding
problem \eqref{1-13-mo-1}-\eqref{1-14} satisfy
\begin{equation}
\mathop{\max}\limits_{i=1,2,3,4}\bigg\{\|\Psi_i\|_{C^{1}(D)},\|{\Psi}_i^{(T)}\|_{C^{1}(D)}\bigg\}\leq \mc_1\sigma.
\end{equation}
In order to prove \eqref{1-24} inductively, suppose that for any $t\in[t_{0},t_0+T_{0}]$ with $ t_0>0$  and $l\in\mathbb{N}$,
\begin{equation}\label{d22}
\begin{cases}
\|\Psi_{1}(t,\cdot)-\Psi_{1}^{(T)}(t,\cdot)\|_{C^{0}([r_0,r_1])}\leq A_2\mc_2\sigma \varepsilon^{l},\
\|\Psi_{2}(t,\cdot)-\Psi_{2}^{(T)}(t,\cdot)\|_{C^{0}([r_0,r_1])}\leq A_2\mc_2 \sigma \varepsilon^{l},\\
\|\Psi_{3}(t,\cdot)-\Psi_{3}^{(T)}(t,\cdot)\|_{C^{0}([r_0,r_1])}\leq A_2\mc_2 \sigma \varepsilon^{l},\
\|\Psi_{4}(t,\cdot)-\Psi_{4}^{(T)}(t,\cdot)\|_{C^{0}([r_0,r_1])}\leq \mc_2 \sigma \varepsilon^{l},
\end{cases}
\end{equation}
 we will show that for any $t\in[t_{0}+T_{0},t_{0}+2T_{0}]$,
\begin{equation}\label{d22-1}\begin{cases}
\|\Psi_{1}(t,\cdot)-\Psi_{1}^{(T)}(t,\cdot)\|_{C^{0}([r_0,r_1])}\leq A_2\mc_2\sigma \varepsilon^{l+1},\
\|\Psi_{2}(t,\cdot)-\Psi_{2}^{(T)}(t,\cdot)\|_{C^{0}([r_0,r_1])}\leq A_2\mc_2 \sigma \varepsilon^{l+1},\\
\|\Psi_{3}(t,\cdot)-\Psi_{3}^{(T)}(t,\cdot)\|_{C^{0}([r_0,r_1])}\leq A_2\mc_2 \sigma \varepsilon^{l+1},\
\|\Psi_{4}(t,\cdot)-\Psi_{4}^{(T)}(t,\cdot)\|_{C^{0}([r_0,r_1])}\leq \mc_2 \sigma \varepsilon^{l+1}.
\end{cases}
\end{equation}
Here $A_2>1$ and $ \varepsilon\in(0,1)$  are constants will be determined   and ${\bm\Psi}^{(T)}(t,r)$ is the time-periodic solution obtained in Theorem \ref{th1}.
\par Firstly, one can follow from \eqref{3-6-1}-\eqref{3-5-1} and \eqref{3-3}-\eqref{3-4} to derive that
\begin{equation}\label{5-6-1}
\begin{aligned}
	&\partial_r \bigg(F_2(\Psi_{4}-\Psi_{4}^{(T)})\bigg)+\mu_{4}( \bm\Psi+\bar{\bf R})\partial_t \bigg(F_2(\Psi_{4}-\Psi_{4}^{(T)})\bigg) \\ &=\frac{F_2}r\bigg(
(\ma_{42}-\ma_{41})(r)(\Psi_{1}-\Psi_{1}^{(T)})+\ma_{43}(r) (\Psi_{2}-\Psi_{2}^{(T)})-\ma_{44}(r)(\Psi_{3}-\Psi_{3}^{(T)})\bigg)\\
&\quad  +F_2\bigg(\mu_{4}( \bm\Psi+\bar{\bf R})-\mu_4(\bar{\bf R})\bigg)\bigg(\mm_{41}(r)(\Psi_{1}-\Psi_{1}^{(T)})-\mm_{44}(r)
(\Psi_{4}-\Psi_{4}^{(T)})\\
&\qquad+\mm_{42}(r)
 (\Psi_{2}-\Psi_{2}^{(T)})-\mm_{43}(r)(\Psi_3-\Psi_{3}^{(T)})\bigg)\\
&\quad +F_2\mu_4( \bm\Psi+\bar{\bf R})\bigg(\bigg(\frac{(\Psi_2)^2}{r} - \frac{\gamma-1}{8r}\bigg((\Psi_4)^2 - (\Psi_1)^2\bigg)-\frac{\bar c  }{4\gamma r }\Psi_3(\Psi_1+ \Psi_4)\bigg)\\
&\qquad-\bigg(\frac{(\Psi_2^{(T)})^2}{r} - \frac{\gamma-1}{8r}\bigg((\Psi_4^{(T)})^2 - (\Psi_1^{(T)})^2\bigg)-\frac{\bar c  }{4\gamma r }\Psi_3^{(T)}(\Psi_1^{(T)}+ \Psi_4^{(T)})\bigg)\bigg)\\
&\quad +F_2\bigg(\mu_4(\bm\Psi+\bar{\bf R})-\mu_4(\bm\Psi^{(T)}+\bar{\bf R}\bigg)\bigg(-\partial_t\Psi_{4}^{(T)}+ \mm_{41}(r)\Psi_{1}^{(T)}+\mm_{44}(r)
\Psi_{4}^{(T)}+\mm_{42}(r)
 \Psi_{2}^{(T)}\\
 &\qquad\quad -\mm_{43}(r)\Psi_{3}^{(T)}+\bigg(\frac{(\Psi_2^{(T)})^2}{r} - \frac{\gamma-1}{8r}\bigg((\Psi_4^{(T)})^2 - (\Psi_1^{(T)})^2\bigg)-\frac{\bar c  }{4\gamma r }\Psi_3^{(T)}(\Psi_1^{(T)}+ \Psi_4^{(T)})\bigg)\\
 &\qquad\quad+\p_t\Psi_3^{(T)}\bigg(\frac{{\Psi}_4 - {\Psi}_1}{4\gamma}+ \frac{\bar c\Psi_3}{2\gamma}\bigg)\bigg) \\
&\quad  +F_2\bigg(\frac{{\Psi}_4 - {\Psi}_1}{4\gamma}+ \frac{\bar c\Psi_3}{2\gamma}\bigg)\bigg(\partial_r (\Psi_3-\Psi_{3}^{(T)}) + \mu_4( \bm\Psi+\bar{\bf R}) \partial_t (\Psi_3-\Psi_{3}^{(T)})\bigg)\\
 &\quad +F_2\bigg(\frac{{\Psi}_4-{\Psi}_4^{(T)}+{\Psi}_1 - {\Psi}_1^{(T)}}{4\gamma}+ \frac{\bar c\Psi_3-\bar c\Psi_3^{(T)}}{2\gamma}\bigg)\bigg(\partial_r \Psi_3^{(T)} + \mu_4( \bm\Psi^{(T)}+\bar{\bf R}) \partial_t \Psi_3^{(T)}\bigg),\\
\end{aligned}
\end{equation}
and
\begin{equation*}
\begin{aligned}
	&\partial_r \bigg(F_1(\Psi_{1}-\Psi_{1}^{(T)})\bigg)+\mu_{1}( \bm\Psi+\bar{\bf R})\partial_t \bigg(F_1(\Psi_{1}-\Psi_{1}^{(T)})\bigg) \\ &=\frac{F_1}r\bigg(
-\ma_{12}(r)(\Psi_{1}-\Psi_{1}^{(T)})+(\ma_{13}-\ma_{11}-\ma_{12})(r)
(\Psi_{4}-\Psi_{4}^{(T)})
+\ma_{14}(r)(\Psi_{2}-\Psi_{2}^{(T)})
\\
&\qquad  +\ma_{15}(r)(\Psi_{3}-\Psi_{3}^{(T)})\bigg)+F_1\bigg(\mu_{1}( \bm\Psi+\bar{\bf R})-\mu_1(\bar{\bf R})\bigg)\bigg(-\mm_{11}(r)(\Psi_{1}-\Psi_{1}^{(T)})\\
&\qquad+\mm_{14}(r)
(\Psi_{4}-\Psi_{4}^{(T)})+\mm_{12}(r)
 (\Psi_{2}-\Psi_{2}^{(T)})+\mm_{13}(r)(\Psi_3-\Psi_{3}^{(T)})\bigg)\\
&\quad +F_1\mu_1( \bm\Psi+\bar{\bf R})\bigg(\bigg(\frac{(\Psi_2)^2}{r} + \frac{\gamma-1}{8r}\bigg((\Psi_4)^2 - (\Psi_1)^2\bigg)+\frac{\bar c  }{4\gamma r }\Psi_3(\Psi_1+ \Psi_4)\bigg)\\
&\qquad-\bigg(\frac{(\Psi_2^{(T)})^2}{r} +\frac{\gamma-1}{8r}\bigg((\Psi_4^{(T)})^2 - (\Psi_1^{(T)})^2\bigg)+\frac{\bar c  }{4\gamma r }\Psi_3^{(T)}(\Psi_1^{(T)}+ \Psi_4^{(T)})\bigg)\bigg)\\
&\quad +F_1\bigg(\mu_1(\bm\Psi+\bar{\bf R})-\mu_1(\bm\Psi^{(T)}+\bar{\bf R}\bigg)\bigg(-\partial_t\Psi_{1}^{(T)}-\mm_{11}(r)\Psi_{1}^{(T)}+\mm_{44}(r)
\Psi_{4}^{(T)}+\mm_{12}(r)
 \Psi_{2}^{(T)}\\
 &\qquad\quad +\mm_{13}(r)\Psi_{3}^{(T)}+\bigg(\frac{(\Psi_2^{(T)})^2}{r} + \frac{\gamma-1}{8r}\bigg((\Psi_4^{(T)})^2 - (\Psi_1^{(T)})^2\bigg)+\frac{\bar c  }{4\gamma r }\Psi_3^{(T)}(\Psi_1^{(T)}+ \Psi_4^{(T)})\bigg)\\ \end{aligned}
\end{equation*}\begin{equation}\label{5-6-2}\begin{aligned}
 &\qquad\quad-\p_t\Psi_3^{(T)}\bigg(\frac{{\Psi}_4 - {\Psi}_1}{4\gamma}+ \frac{\bar c\Psi_3}{2\gamma}\bigg)\bigg) \\
&\quad  -F_1\bigg(\frac{{\Psi}_4 - {\Psi}_1}{4\gamma}+ \frac{\bar c\Psi_3}{2\gamma}\bigg)\bigg(\partial_r (\Psi_3-\Psi_{3}^{(T)}) + \mu_1( \bm\Psi+\bar{\bf R}) \partial_t (\Psi_3-\Psi_{3}^{(T)})\bigg)\\
 &\quad -F_1\bigg(\frac{{\Psi}_4-{\Psi}_4^{(T)}+{\Psi}_1 - {\Psi}_1^{(T)}}{4\gamma}+ \frac{\bar c\Psi_3-\bar c\Psi_3^{(T)}}{2\gamma}\bigg)\bigg(\partial_r \Psi_3^{(T)} + \mu_1( \bm\Psi^{(T)}+\bar{\bf R}) \partial_t \Psi_3^{(T)}\bigg),\\
\end{aligned}
\end{equation}
and \begin{equation}\label{5-6-3} \partial_r \bigg(r(\Psi_2-\Psi_{2}^{(T)})\bigg)+  \mu_2(\bm\Psi+\bar{\bf R})\p_t\bigg(r(\Psi_2-\Psi_{2}^{(T)})\bigg)=\bigg(\mu_2(\bm\Psi+\bar{\bf R})-\mu_2(\bm\Psi^{(T)}+\bar{\bf R})\bigg)\partial_t(r\Psi_{2}^{(T)}),\\
\end{equation}
and
\begin{equation}\label{5-6-4}   \partial_r \bigg(\Psi_3-\Psi_{3}^{(T)}\bigg)+\mu_3(\bm\Psi+\bar{\bf R})\p_t\bigg(\Psi_3-\Psi_{2}^{(T)})\bigg)=\bigg(\mu_3(\bm\Psi+\bar{\bf R})-\mu_3(\bm\Psi^{(T)}+\bar{\bf R})\bigg)\partial_t\Psi_3^{(T)}.\\
\end{equation}
For $ i=1,2,3,4$, define the $i$-th characteristic curve
\begin{align}
\left\{
\begin{aligned}
&\frac{\de t_{i}}{\de s}(s;\hat{t},\hat{r})=\mu_{i}
(\bm\Psi+\bar{\bf R})(t_i(s;\hat t,\hat r),s),\\
&t_{i}(\h r;\hat{t},\h r)=\hat{t}.
\end{aligned}\right.\label{1-a}
\end{align}
Noting $T_{0}=(r_1-r_0)\mathop{\max}\limits_{i=1,2,3,4}\mathop{\sup}\limits_{\bm\Psi\in \mathfrak{V}}|\mu_{i}(\bm\Psi+\bar{\bf R})|$, for each points $(\hat{t},\hat{r})\in[t_0+T_{0},\tau]\times[r_0,r_1]$, the  curve $t=t_{i}(r;\hat{t},\hat{r})$ will intersect the boundary in a time interval shorter than $T_{0}$, namely,
$$t_{1}(r_1;\hat{t},\hat{r})\in[\hat{t}-T_{0},\hat{t}]\subseteq[t_0,\tau],\ \ t_{j}(r_0;\hat{t},\hat{r})\in[\hat{t}-T_{0},\hat{t}]\subseteq[t_0,\tau], \ j=2,3,4,\ \forall(\hat{t},\hat{r})\in[t_0+T_{0},\tau]\times[r_0,r_1].$$
Furthermore, at the boundary $ r=r_0$ or $ r=r_1$, the following  estimates hold:
\begin{equation}\label{aa}
\begin{cases}
\begin{aligned}
&\bigg|\Psi_{4}(t,r_0)-\Psi_{4}^{(T)}(t,r_0)\bigg|=|K_4|\bigg|\Psi_{1}(t,r_0)
-\Psi_{1}^{(T)}(t,r_0)\bigg| \leq A_2K\mc_2 \sigma \varepsilon^{l},\ \ \forall t\in[t_{0},t_{0}+T_{0}],\\
&\bigg|\Psi_{1}(t,r_1)-\Psi_{1}^{(T)}(t,r_1)\bigg|=\bigg|\Psi_{4}(t,r_1)
-\Psi_{4}^{(T)}(t,r_1)\bigg| \leq \mc_2 \sigma \varepsilon^{l},\qquad\qquad\forall t\in[t_{0},t_{0}+T_{0}].\\
\end{aligned}
\end{cases}
\end{equation}
Here $K=\max\{|K_2|,|K_3|,|K_4|\}$ is given in \eqref{3-5-2-de}.
Then by integrating \eqref{5-6-1} and \eqref{5-6-2} along the characteristic curves $t_{4}(s;\hat{t},\hat{r})$ and $t_{1}(s;\hat{t},\hat{r})$, respectively,  one can derive from \eqref{1-22}, \eqref{3-5-2-es2-es}, \eqref{3-5-2-es2-6-1-2} and \eqref{aa} that \begin{equation}\label{3-2-time-2}
\begin{aligned}
\bigg|\Psi_{4}(\h t,\h r)-\Psi_{4}^{(T)}(\h t,\h r)\bigg|&\leq \frac{A_2K\mc_2 \sigma \varepsilon^{l}}{F_2(\h r)}+\frac{A_2(F_2(\h r)-1)\mc_2 \sigma \varepsilon^{l}}{F_2(\h r)}+\frac{A_2C_{11}\mc_1\sigma \me \mc_2 \sigma \varepsilon^{l}}{F_2(\h r)}\\
&\quad+\frac{A_2 A_0\me  M_1 }{ M_1 F_2(\h r)}\ln \frac{r_1}{r_0} \mc_2 \sigma \varepsilon^{l}\\
&\leq A_2\mc_2 \sigma \varepsilon^{l}-\frac{A_2(1-K)}{F_1(\h r)}\mc_2 \sigma \varepsilon^{l}+\frac{C_{11}\mc_1\sigma \me \mc_2 \sigma \varepsilon^{l}}{F_1(\h r)}\\
&\qquad+\frac{A_2(1-K) }{100\me}\mc_2 \sigma \varepsilon^{l}\\
&\leq A_2\mc_2 \sigma \varepsilon^{l}+A_2C_{11}\mc_1\sigma \me \mc_2 \sigma \varepsilon^{l} -\frac{99A_2 (1-K) }{100\me}\mc_2 \sigma \varepsilon^{l},\\
\end{aligned}
\end{equation}
and \begin{equation}\label{3-1-time-1}
\begin{aligned}
\bigg|\Psi_{1}(\h t,\h r)-\Psi_{1}^{(T)}(\h t,\h r)\bigg|&\leq \frac{\mc_2 \sigma \varepsilon^{l}}{F_1(\h r)}+\frac{(F_1(\h r)-1)\mc_2 \sigma \varepsilon^{l}}{F_1(\h r)}+\frac{A_2C_{11}\mc_1\sigma \me \mc_2 \sigma \varepsilon^{l}}{F_1(\h r)}\\
&\quad+\frac{A_2 A_0\me M_1  }{M_1F_1(\h r)}\ln \frac{r_1}{r_0} \mc_2 \sigma \varepsilon^{l}\\
&\leq \mc_2 \sigma \varepsilon^{l}+\frac{C_{11}\mc_1\sigma \me \mc_2 \sigma \varepsilon^{l}}{F_1(\h r)}+\frac{ A_2(1-K) }{100\me}\mc_2 \sigma \varepsilon^{l}\\
&\leq \mc_2 \sigma \varepsilon^{l}+A_2C_{11}\mc_1\sigma \me \mc_2 \sigma \varepsilon^{l} +\frac{ A_2(1-K) }{100\me}\mc_2 \sigma \varepsilon^{l}.\\
\end{aligned}
\end{equation}
 Here $C_{11}>0$  is a constant depending only on $ (\gamma,r_0,r_1,\rho_0,U_{1,0},U_{2,0},S_0)$.
Set \begin{equation}\label{de} A_2 = 1 + \frac{1-K}{25\me},  \ \ \sigma_{2}=\min\bigg\{\sigma_1,\frac{1-K}{100 A_2 C_{11} \mc_1 \me^2}\bigg\},\ \ \varepsilon = 1 - \frac{1-K}{200\me}.\end{equation} For any $\sigma\in(0,\sigma_2)$, it holds that
\begin{equation}
\begin{cases}
\begin{aligned}
\bigg|\Psi_{4}(\h t,\h r)-\Psi_{4}^{(T)}(\h t,\h r)\bigg|&\leq \bigg(A_2+\frac{1-K}{100\me}-\frac{99A_2 (1-K) }{100\me}\bigg)\mc_2 \sigma \varepsilon^{l}\\
&=(1-\frac{1-K}{200\me}+\frac{11(1-K)}{200\me}-\frac{99A_2 (1-K) }{100\me}\bigg)\mc_2 \sigma \varepsilon^{l}\\
&\leq \mc_2 \sigma \varepsilon^{l+1},\\
\bigg|\Psi_{1}(\h t,\h r)-\Psi_{1}^{(T)}(\h t,\h r)\bigg|&\leq \bigg(1+\frac{1-K}{100\me}+\frac{ A_2(1-K) }{100\me}\bigg)\mc_2 \sigma \varepsilon^{l}, \\
&= \bigg(A_2-A_2\frac{1-K}{200\me}-\frac{3(1-K)}{100\me}+A_2\frac{3 (1-K) }{200\me}\bigg)\mc_2 \sigma \varepsilon^{l} \\ &\leq A_2\mc_2 \sigma \varepsilon^{l+1}.\\
\end{aligned}
\end{cases}
\end{equation}
\par Next, at the  boundary $ r=r_0$, one has
\begin{equation}
\begin{cases}
\begin{aligned}
&\bigg|\Psi_{2}(t,r_0)-\Psi_{2}^{(T)}(t,r_0)\bigg|=|K_2|\bigg|\Psi_{1}(t,r_0)
-\Psi_{1}^{(T)}(t,r_0)\bigg| \leq A_2K\mc_2 \sigma \varepsilon^{l},\quad\forall t\in[t_{0},t_{0}+T_{0}],\\
&\bigg|\Psi_{3}(t,r_0)-\Psi_{3}^{(T)}(t,r_0)\bigg|=|K_3|\bigg|\Psi_{1}(t,r_0)
-\Psi_{1}^{(T)}(t,r_0)\bigg| \leq A_2K\mc_2 \sigma \varepsilon^{l},\quad\forall t\in[t_{0},t_{0}+T_{0}].
\end{aligned}
\end{cases}
\end{equation}
Then  by integrating \eqref{5-6-3}-\eqref{5-6-4} along the  characteristic curves $t_{2}(s;\hat{t},\hat{r})$ and $t_{3}(s;\hat{t},\hat{r})$  respectively, there holds
\begin{equation}
\begin{cases}
\begin{aligned}
&\bigg|\Psi_{2}(\h t,\h r)-\Psi_{2}^{(T)}(\h t,\h r)\bigg|\leq (A_2K+A_2C_{11} \mc_1\sigma) \mc_2 \sigma \varepsilon^{l} \leq A_2\sigma\mc_2 \varepsilon^{l+1}   ,\\
&\bigg|\Psi_{3}(\h t,\h r)-\Psi_{3}^{(T)}(\h t,\h r)\bigg|\leq (A_2K+A_2C_{11} \mc_1\sigma) \mc_2 \sigma \varepsilon^{l} \leq A_2\sigma\mc_2 \varepsilon^{l+1}   .
\end{aligned}
\end{cases}
\end{equation}
Since $\hat{t}\in[t_{0}+T_{0},\tau]$ is arbitrary, the estimates in \eqref{d22} hold.
\section{Regularity of the time-periodic solution}\noindent
\par In this section, we will prove the higher regularity of the time-periodic solutions, provided that all boundary functions $(H_{1},H_{2},H_3,H_4)(t)$ possess higher regularity. To this end, we recall the iteration scheme \eqref{3-5}-\eqref{3-8} introduced in Section 2, and then present the uniform $W^{2,\infty} $ a priori estimate for the approximate sequence as follows.
\begin{proposition}\label{p4}
For the iteration scheme \eqref{3-5}-\eqref{3-8}, under hypotheses \eqref{1-25}, for large enough
$\mc_3>0 $ and any given $ k\in \mathbb{N}_+$, one has
\begin{equation}\label{4-2}
\|\partial_{t}^{2}\Psi_{i}^{(k)}\|_{L^{\infty}(D)}\leq \mc_3, \ \
\|\partial_{t}\partial_{r}\Psi_{i}^{(k)}\|_{L^{\infty}(D)}\leq \mathcal{K}\mc_3, \ \
\|\partial_{r}^{2}\Psi_{i}^{(k)}\|_{L^{\infty}(D)}\leq \mathcal{K}^{2}\mc_3
\end{equation}
under the assumptions
\begin{equation}
\|\partial_{t}^{2}\Psi_{i}^{(k-1)}\|_{L^{\infty}(D)}\leq \mc_3, \ \
\|\partial_{t}\partial_{r}\Psi_{i}^{(k-1)}\|_{L^{\infty}(D)}\leq \mathcal{K}\mc_3,\ \
\|\partial_{r}^{2}\Psi_{i}^{(k-1)}\|_{L^{\infty}(D)}\leq \mathcal{K}^{2}\mc_3,
\end{equation}
where $i=1,2,3,4.$
\end{proposition}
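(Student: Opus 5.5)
The plan is to differentiate the iteration scheme \eqref{3-6}--\eqref{3-8} (in the weighted forms \eqref{3-6-1}--\eqref{3-5-1}) twice in $t$ and run the same characteristic-integration argument as in Step~2 of Proposition~\ref{pr1}, now for $W_i^{(k)}=\partial_t^2\Psi_i^{(k)}$. Mixed and radial second derivatives will then be read off algebraically from the equations. Set $W_i^{(k)}=\partial_t\Upsilon_i^{(k)}$. Differentiating \eqref{3-6-t}, \eqref{3-5-t}, \eqref{3-7-t}, \eqref{3-8-t} once more in $t$ gives transport equations
\[
\partial_r(F_2W_4^{(k)})+\mu_4(\bm\Psi^{(k-1)}+\bar{\bf R})\partial_t(F_2W_4^{(k)})=\frac{F_2}{r}\Big((\ma_{42}-\ma_{41})W_1^{(k-1)}+\ma_{43}W_2^{(k-1)}-\ma_{44}W_3^{(k-1)}\Big)+\cdots .
\]
There are analogous equations for $F_1W_1^{(k)}$, $rW_2^{(k)}$ and $W_3^{(k)}$. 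The boundary data come from differentiating \eqref{1-14-mod-11}: $W_j^{(k)}(t,r_0)=K_jW_1^{(k-1)}(t,r_0)+\mh_j''(t)$ for $j=2,3,4$, and $W_1^{(k)}(t,r_1)=\mh_1''(t)-W_4^{(k)}(t,r_1)$. By \eqref{1-25} the data $\mh_j''$ are bounded by $M_3$.

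The remaining terms ``$\cdots$'' have three kinds. The first kind is linear in $W^{(k-1)}$ with small coefficients $O(\sigma)$, namely $\mu-\mu(\bar{\bf R})$ and the quadratic terms. The second kind is products of first derivatives, of size $O(\sigma^2)$ by \eqref{3-11}. The third kind is terms linear in $W^{(k)}_4$ (resp. $W^{(k)}_1$) with $O(\sigma)$ coefficients coming from $\partial_t\mu_i$. I will integrate along $t_4^{(k)}$ from $r_0$ and along $t_1^{(k)}$ from $r_1$. I will then use the bounds $1\le F_1,F_2\le\me$, the coefficient bounds \eqref{3-5-2-es2-es}--\eqref{es1}, and the width restriction \eqref{3-5-2-es2-6-1-2}, which makes the background coupling contribute at most $\mf_1(\xi)\le1$ times the inductive bound divided by $M_1$. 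This will give the estimate $\|W_4^{(k)}\|\le \chi\mc_3+CM_3+C\sigma\mc_3$ with some $\chi<1$ built from $K<1$ exactly as $\chi_1,\chi_2$ were. The same holds for $W_1^{(k)}$, and for $W_2^{(k)},W_3^{(k)}$ via Gronwall as in \eqref{3-7-t-1-es}. Choosing $\mc_3\ge CM_3/(1-\chi)$ large and $\sigma$ small closes the bound $\|\partial_t^2\Psi_i^{(k)}\|\le\mc_3$.

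The main obstacle is the entropy coupling term $\big(\tfrac{\Psi_4-\Psi_1}{4\gamma}+\tfrac{\bar c\Psi_3}{2\gamma}\big)(\partial_r+\mu_4\partial_t)\partial_t^2\Psi_3^{(k-1)}$. It involves third derivatives of $\Psi_3^{(k-1)}$, which are not controlled by the hypothesis. I will handle it as in \eqref{3-5-t-1-1}. Along $t_4^{(k)}$ this term equals the coefficient times $\frac{\mathrm d}{\mathrm ds}W_3^{(k-1)}(t_4^{(k)}(s),s)$, so integrating by parts moves the derivative onto the coefficient. That coefficient is $O(\sigma)$ in $C^0$, and its $s$-derivative is $O(\sigma)$ by \eqref{3-11}. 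The result is bounded by $C\sigma\mc_3$. Because $W_4^{(k)}$ also appears on the right through $\partial_t\mu_4$, I will first use a crude Gronwall bound, as in \eqref{3-5-t-2}, and then the refined weighted estimate. Since $\Psi^{(k)}$ is only $W^{2,\infty}$, these manipulations should be done for mollified data or justified via Lipschitz bounds along characteristics.

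Finally, the mixed and radial bounds will come from the equations themselves. From \eqref{3-6}--\eqref{3-8}, $\partial_r\Psi_i^{(k)}=-\mu_i\partial_t\Psi_i^{(k)}+(\text{lower order})$. Differentiating in $t$ gives $|\partial_t\partial_r\Psi_i^{(k)}|\le\mk\,\mc_3+O(\sigma)$. Differentiating in $r$ and substituting again gives $|\partial_r^2\Psi_i^{(k)}|\le\mk^2\mc_3+O(\sigma)$. For $i=1$ the entropy term contributes $\partial_r(\partial_r+\mu_1\partial_t)\Psi_3^{(k-1)}$, and this is where the earlier $W^{2,\infty}$ hypothesis on $\Psi^{(k-1)}$ enters, multiplied by an $O(\sigma)$ coefficient. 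Enlarging $\mc_3$ so that these $O(\sigma)$ remainders are absorbed, say via $\mk\ge1$ or by replacing $\mk$ with $\max\{\mk,1\}$, yields \eqref{4-2}.
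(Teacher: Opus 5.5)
Your proposal is correct and follows essentially the paper's own argument: differentiate the weighted transport equations twice in $t$ and integrate along the first and fourth characteristics, with the entropy term integrated by parts as in \eqref{3-5-t-1-1}; use Gronwall for $\Psi_2,\Psi_3$; then read the mixed and radial second derivatives off the equations. One bookkeeping point needs fixing: for the mixed and radial bounds you must carry a strict factor from the $\partial_t^2$ step, namely $\|\partial_t^2\Psi_i^{(k)}\|\le\chi'\mc_3$ with $\chi'<1$ (as the paper does with $\chi_5,\chi_6$), because $\mk\mc_3+O(\sigma)\le\mk\mc_3$ cannot be obtained by enlarging $\mc_3$ alone.
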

\begin{proof}
We use  the same sequence constructed in Section 2. Firstly, by Proposition \ref{pr1}, we  have \eqref{3-11} for each $k$ and
\begin{equation}\label{4-3}
\mathop{\max}\limits_{i=1,2,3,4}\|\Psi_{i}^{(k)}\|_{C^{1}(D)}\leq (M_1+M_2)\sigma.
\end{equation}Set
$$\psi_{i}^{(k)}=\partial_{t}\Upsilon_{i}^{(k)}=\partial_{t}^{2}\Psi_{i}^{(k)},\ \ i=1,2,3,4, \ \ k\in \mathbb{N}_{+}.$$
At the boundary $ r=r_0$, one has
$$|\psi_{4}^{(k)}(t,r_0)|=|\mh_{4}''(t)+K_{4}\psi_{1}^{(k-1)}(t,r_0)|\leq M_3+K\mc_3.$$
Then  by differentiating temporal derivative of \eqref{3-6-t}, one can follow the similar arguments as in \eqref{3-5-t-1}-\eqref{1-4th} to derive
\begin{equation}\label{1-4th-1}
\begin{aligned}
\|\psi_{4}^{(k)}\|_{C^{0}(D)}&\leq \frac{M_3+K\mc_3}{F_2(r)}
+\frac{\mc_3(F_2(r)-1)+C_{11} \bigg(\mc_3 (M_1+M_2)\sigma+(M_1+M_2)^2\sigma^2\bigg)}{F_2(r)}\\
&\quad+\frac{A_0\me   M_1\mc_3 }{M_1F_2(r)}\ln \frac{r_1}{r_0} \\
&\leq \mc_3-\frac{(1-K)}{\me}\mc_3+C_{11} \bigg(\mc_3 (M_1+M_2)\sigma+(M_1+M_2)^2\sigma^2\bigg)+\frac{1-K}{100\me}C_3+M_3
\end{aligned}
\end{equation}
for some positive constant $C_{11}$  depending only on $ (\gamma,r_0,r_1,\rho_0,U_{1,0},U_{2,0},S_0)$. Let $\mc_3$ satisfy
$$ \frac{1-K}{100\me}\mc_3>M_3+1.$$ Then   one can choose  sufficiently small $ \sigma_{11}^\ast>0 $ such that for any
$\sigma<\sigma_4=\min\{\sigma_2,\sigma_{11}^\ast\},$  \begin{equation}\|\psi_{4}^{(k)}\|_{C^{0}(D)}\leq\chi_5\mc_3,\end{equation} where $0<\chi_5<1$ is a constant.  This, together with  \eqref{3-6-t} and the estimate \eqref{4-3}, yields that
\begin{align}
\|\partial_{r}\partial_{t}\Psi_{4}^{(k)}\|_{C^{0}(D)}&\leq \chi_5\mathcal{K}\mc_3+C_{11}\bigg((M_1+M_2)\sigma+(M_1+M_2)^2\sigma^2\bigg)
\leq \chi_6\mathcal{K}\mc_3,\label{e12}
\end{align}
where  $\chi_6$ is a constant satisfying $\chi_5<\chi_6<1$.
Furthermore, taking the spatial derivative to \eqref{3-2} and using the above estimates lead to
\begin{align}
\|\partial_{r}^2\Psi_{4}^{(k)}\|_{C^{0}(D)}&\leq \chi_6\mathcal{K}^2\mc_3+C_{11}\bigg((M_1+M_2)\sigma+(M_1+M_2)^2\sigma^2\bigg)
\leq \mathcal{K}^2\mc_3.\label{e12-1}
\end{align}
In a similar way, one has
\begin{equation}\label{4-2-11}
\|\partial_{t}^{2}\Psi_{1}^{(k)}\|_{C^{0}(D)}\leq \chi_7\mc_3, \ \
\|\partial_{t}\partial_{r}\Psi_{1}^{(k)}\|_{C^{0}(D)}\leq \chi_8\mathcal{K}\mc_3, \ \
\|\partial_{r}^{2}\Psi_{1}^{(k)}\|_{C^{0}(D)}\leq \mathcal{K}^{2}\mc_3.
\end{equation}
where the constants $\chi_7$ and $\chi_8$ satisfy $\chi_5<\chi_7<\chi_8<1$.
\par Next, at the boundary $ r=r_0$, there holds
\begin{equation*}
\begin{cases}
|\psi_{2}^{(k)}(t,r_0)|=|\mh_{2}''(t)+K_{2}\psi_{1}^{(k-1)}(t,r_0)|\leq M_3+K\mc_3,\\
|\psi_{3}^{(k)}(t,r_0)|=|\mh_{3}''(t)+K_3\psi_{1}^{(k-1)}(t,r_0)|\leq M_3+K\mc_3.\\
\end{cases}
\end{equation*} Then taking the temporal derivative of \eqref{3-7-t}-\eqref{3-8-t}
and applying the  Gronwall inequality derive that
\begin{equation}
\begin{cases}
\|\psi_{2}^{(k)}\|_{C^{0}(D)}\leq \bigg(M_3+K\mc_3
+C_{11}\bigg((M_1\sigma)^3+M_1\sigma\mc_3\bigg)\bigg) e^{C_{11}M_1\sigma(r_1-r_0)}\leq \chi_9\mc_3,\\
\|\psi_{3}^{(k)}\|_{C^{0}(D)}\leq \bigg(M_3+K\mc_3
+C_{11}\bigg((M_1\sigma)^3+M_1\sigma\mc_3\bigg)\bigg) e^{C_{11}M_1\sigma(r_1-r_0)}\leq \chi_9\mc_3,\\
\end{cases}
\end{equation}
where $0<\chi_9<1$ is a constant.   These, together with \eqref{3-7-t}-\eqref{3-8-t}, lead to
\begin{equation}
\|\partial_{r}\partial_{t}\Psi_{i}^{(k)}\|_{C^{0}(D)}\leq \chi_9\mathcal{K}\mc_3+C_{11}(M_1+M_2)^2\sigma^2
\leq \chi_{10}\mathcal{K}\mc_3, \ \ i=2,3,
\end{equation}
where the constants $\chi_9<\chi_{10}<1$. Finally, one can take the spatial derivative to \eqref{3-3}-\eqref{3-4} and combine the above estimates to obtain
\begin{equation}
\|\partial_{r}^2\Psi_{i}^{(k)}\|_{C^{0}(D)}\leq\mathcal{K}^2\mc_3, \ \ i=2,3.
\end{equation}
Therefore, we complete the proof of \eqref{4-2}.
\end{proof}
\par Now, we  prove Theorem \ref{th3}. By \eqref{4-2}, we know that $\{\bm\Psi^{(k)}\}_{k=1}^{\infty}$ is uniformly $W^{2,\infty}$ bounded and then $weak^{*}$ convergent. Moreover, noting that $\{\bm\Psi^{(k)}\}_{k=1}^{\infty}$ converges strongly to ${\bm\Psi}^{(T)}$ in $C^{1}(D)$, so we get the $W^{2,\infty}$ regularity of ${\bm\Psi}^{(T)}$.
\section{ $C^{1}$ stability of the time-periodic solution}\noindent
\par In this section, we will prove Theorem \ref{th4}. Firstly, for any $[lT_{0},(l+1)T_{0}]$ with $l\in\mathbb{N}$, we have obtained the $C^{0}$ exponential convergence in  Theorem \ref{th2} as follows:
\begin{equation}\label{6-1}
\begin{cases}
\|\Psi_{1}(t,\cdot)-\Psi_{1}^{(T)}(t,\cdot)\|_{C^{0}([r_0,r_1])}\leq A_2\mc_2\sigma \varepsilon^{l}, \ \|\Psi_{2}(t,\cdot)-\Psi_{2}^{(T)}(t,\cdot)\|_{C^{0}([r_0,r_1])}\leq A_2\mc_2 \sigma \varepsilon^{l},\ \\ \|\Psi_{3}(t,\cdot)-\Psi_{3}^{(T)}(t,\cdot)\|_{C^{0}([r_0,r_1])}\leq A_2\mc_2 \sigma \varepsilon^{l}, \
\|\Psi_{4}(t,\cdot)-\Psi_{4}^{(T)}(t,\cdot)\|_{C^{0}([r_0,r_1])}\leq \mc_2 \sigma \varepsilon^{l},
\end{cases}\end{equation}
which also show that for any $t\in[(l+1)T_{0},(l+2)T_{0}]$,
\begin{equation}\label{6-2}
\begin{cases}
\|\Psi_{1}(t,\cdot)-\Psi_{1}^{(T)}(t,\cdot)\|_{C^{0}([r_0,r_1])}\leq A_2\mc_2\sigma \varepsilon^{l+1}, \
\|\Psi_{2}(t,\cdot)-\Psi_{2}^{(T)}(t,\cdot)\|_{C^{0}([r_0,r_1])}\leq A_2\mc_2 \sigma \varepsilon^{l+1},\\
\|\Psi_{3}(t,\cdot)-\Psi_{3}^{(T)}(t,\cdot)\|_{C^{0}([r_0,r_1])}\leq A_2\mc_2 \sigma \varepsilon^{l+1},\
\|\Psi_{4}(t,\cdot)-\Psi_{4}^{(T)}(t,\cdot)\|_{C^{0}([r_0,r_1])}\leq \mc_2 \sigma \varepsilon^{l+1}.
\end{cases}\end{equation}
Furthermore, by Theorems \ref{th1} and \ref{th3}, as well as the results in \cite{Li,Q85}, one has
\begin{equation}\label{6-3}
\mathop{\max}\limits_{i=1,2,3,4}\bigg\{\|\Psi_i\|_{C^{1}(D)},\|{\Psi}_i^{(T)}\|_{C^{1}(D)}\bigg\}\leq \mc_1\sigma,
\end{equation}
and
\begin{equation}\label{6-5}
\|{\bm\Psi}^{(T)}\|_{W^{2,\infty}(D)}
\leq(1+\mk)^{2}\mc_3.\end{equation}
Next, by the continuity, we will inductively get the estimates for the convergence of the first derivatives. Suppose that for any $t\in[lT_{0},\tau]$  with $l\in \mathbb{N}_{+}$ and $\tau\in[(l+1)T_{0},(l+2)T_{0}]$,
\begin{equation}\label{6-5-1}
\begin{cases}
\|\partial_{t}\Psi_{1}(t,\cdot)-\partial_{t}\Psi_{1}^{(T)}(t,\cdot)\|_{C^{0}([r_0,r_1])}\leq A_3\mc_4\sigma\varepsilon^{l},\ \  \ \ \|\partial_{t}\Psi_{2}(t,\cdot)-\partial_{t}\Psi_{2}^{(T)}(t,\cdot)\|_{C^{0}([r_0,r_1])}\leq A_3\mc_4\sigma\varepsilon^{l},\\
\|\partial_{t}\Psi_{3}(t,\cdot)-\partial_{t}\Psi_{3}^{(T)}(t,\cdot)\|_{C^{0}([r_0,r_1])}\leq A_3\mc_4\sigma\varepsilon^{l},\ \ \ \ \|\partial_{t}\Psi_{4}(t,\cdot)-\partial_{t}\Psi_{4}^{(T)}(t,\cdot)\|_{C^{0}([r_0,r_1])}\leq \mc_4\sigma\varepsilon^{l},\\
\|\partial_{r}\Psi_{1}(t,\cdot)-\partial_{r}\Psi_{1}^{(T)}(t,\cdot)\|_{C^{0}([r_0,r_1])}\leq A_5\mk\mc_4\sigma\varepsilon^{l},\ \|\partial_{r}\Psi_{2}(t,\cdot)-\partial_{r}\Psi_{2}^{(T)}(t,\cdot)\|_{C^{0}([r_0,r_1])}\leq A_4\mk\mc_4\sigma\varepsilon^{l},\\
\|\partial_{r}\Psi_{3}(t,\cdot)-\partial_{r}\Psi_{3}^{(T)}(t,\cdot)\|_{C^{0}([r_0,r_1])}\leq A_4\mk\mc_4\sigma\varepsilon^{l},\ \|\partial_{r}\Psi_{4}(t,\cdot)-\partial_{r}\Psi_{4}^{(T)}(t,\cdot)\|_{C^{0}([r_0,r_1])}\leq A_5\mk\mc_4\sigma\varepsilon^{l},\\
\end{cases}
\end{equation}
we will prove that for any $t\in[(l+1)T_{0},\tau]$,
\begin{equation}\label{6-6}
\begin{cases}
\|\partial_{t}\Psi_{1}(t,\cdot)-\partial_{t}\Psi_{1}^{(T)}(t,\cdot)\|_{C^{0}([r_0,r_1])}\leq A_3\mc_4\sigma\varepsilon^{l+1}, \\ \|\partial_{t}\Psi_{2}(t,\cdot)-\partial_{t}\Psi_{2}^{(T)}(t,\cdot)\|_{C^{0}([r_0,r_1])}\leq A_3\mc_4\sigma\varepsilon^{l+1},\\
\|\partial_{t}\Psi_{3}(t,\cdot)-\partial_{t}\Psi_{3}^{(T)}(t,\cdot)\|_{C^{0}([r_0,r_1])}\leq A_3\mc_4\sigma\varepsilon^{l+1}, \\ \|\partial_{t}\Psi_{4}(t,\cdot)-\partial_{t}\Psi_{4}^{(T)}(t,\cdot)\|_{C^{0}([r_0,r_1])}\leq \mc_4\sigma\varepsilon^{l+1},\\
\end{cases}
\end{equation}
and
\begin{equation}\label{6-5-1-r}
\begin{cases}
\|\partial_{r}\Psi_{2}(t,\cdot)-\partial_{r}\Psi_{2}^{(T)}(t,\cdot)\|_{C^{0}([r_0,r_1])}\leq A_4\mk\mc_4\sigma\varepsilon^{l+1},\\
\|\partial_{r}\Psi_{3}(t,\cdot)-\partial_{r}\Psi_{3}^{(T)}(t,\cdot)\|_{C^{0}([r_0,r_1])}\leq A_4\mk\mc_4\sigma\varepsilon^{l+1},\\
\end{cases}
\end{equation}
and
\begin{equation}\label{6-5-1-r-1}
\begin{cases}
\|\partial_{r}\Psi_{1}(t,\cdot)-\partial_{r}\Psi_{1}^{(T)}(t,\cdot)\|_{C^{0}([r_0,r_1])}\leq A_5\mk\mc_4\sigma\varepsilon^{l+1},\\ \|\partial_{r}\Psi_{4}(t,\cdot)-\partial_{r}\Psi_{4}^{(T)}(t,\cdot)\|_{C^{0}([r_0,r_1])}\leq A_5\mk\mc_4\sigma\varepsilon^{l+1},\\
\end{cases}
\end{equation}
where $A_i>1$ $(i=3,4,5)$  are constants will be determined later.
\par Set
$$\Upsilon_{i}=\p_t\Psi_{i}, \ W_i=\p_r\Psi_{i},\ \Upsilon_{i}^{(T)}=\p_t\Psi_{i}^{(T)}, \ W_i^{(T)}=\p_r\Psi_{i}^{(T)},  \ i=1,2,3,4.$$
At the boundary   $ r=r_0$ or $ r=r_1$, it holds that
\begin{equation*}
\begin{cases}
\begin{aligned}
&\bigg|\Upsilon_{4}(t,r_0)-\Upsilon_{4}^{(T)}(t,r_0)\bigg|=|K_4|\bigg|\Upsilon_{1}(t,r_0)
-\Upsilon_{1}^{(T)}(t,r_0)\bigg| \leq A_4K\mc_4 \sigma \varepsilon^{l},\\
&\bigg|\Upsilon_{1}(t,r_1)-\Upsilon_{1}^{(T)}(t,r_1)\bigg|=\bigg|\Upsilon_{4}(t,r_1)
-\Upsilon_{4}^{(T)}(t,r_1)\bigg| \leq \mc_4 \sigma \varepsilon^{l}.\\
\end{aligned}
\end{cases}
\end{equation*}
 Then by  taking  the temporal derivatives of \eqref{5-6-1}-\eqref{5-6-2} and integrating along the corresponding  characteristic curves $t_{4}(s;\hat{t},\hat{r})$ and $t=t_{1}(s;\hat{t},\hat{r})$, respectively, one has
\begin{equation}\label{3-2-time-2-1}
\begin{aligned}
\bigg|\Upsilon_{4}(\h t,\h r)-\Upsilon_{4}^{(T)}(\h t,\h r)\bigg|&\leq \frac{A_3K\mc_4 \sigma \varepsilon^{l}}{F_2(\h r)}+\frac{A_3(F_2(\h r)-1)\mc_4 \sigma \varepsilon^{l}}{F_2(\h r)}+\frac{C_{12}\mc_1\sigma\me (A_2\mc_2 +A_3\mc_4) \sigma\varepsilon^{l}}{F_2(\h r)}\\
&\quad+\frac{A_3 A_0\me  M_1 }{ M_1 F_2(\h r)}\ln \frac{r_1}{r_0} \mc_4 \sigma \varepsilon^{l}+
\sup_{\bm\Psi\in \mathfrak{V}}|\n\mu_{4}|(1+\mathcal{K})^{2}\mc_3 A_2\mc_2\sigma\varepsilon^{l}\\
&\leq A_3\mc_4 \sigma \varepsilon^{l}-\frac{A_3(1-K)}{F_2(\h r)}\mc_4 \sigma \varepsilon^{l}+\frac{C_{12}\mc_1\sigma\me (A_2\mc_2 +A_3\mc_4) \sigma\varepsilon^{l}}{F_1(\h r)}\\
&\quad+\frac{A_3(1-K) }{100\me}\mc_4 \sigma \varepsilon^{l}+
\sup_{\bm\Psi\in \mathfrak{V}}|\n\mu_{4}|(1+\mathcal{K})^{2}\mc_3A_2\mc_2\sigma\varepsilon^{l},\\
\end{aligned}
\end{equation}
and
\begin{equation}\label{3-1-time-1-1}
\begin{aligned}
\bigg|\Upsilon_{1}(\h t,\h r)-\Upsilon_{1}^{(T)}(\h t,\h r)\bigg|&\leq \frac{\mc_4 \sigma \varepsilon^{l}}{F_1(\h r)}+\frac{(F_1(\h r)-1)\mc_4 \sigma \varepsilon^{l}}{F_1(\h r)}+\frac{C_{12}\mc_1\sigma\me (A_2\mc_2 +A_3\mc_4) \sigma\varepsilon^{l}}{F_1(\h r)}\\
&\quad+\frac{A_3 A_0\me M_1  }{M_1F_1(\h r)}\ln \frac{r_1}{r_0} \mc_4 \sigma \varepsilon^{l}+
\sup_{\bm\Psi\in \mathfrak{V}}|\n\mu_{1}|(1+\mathcal{K})^{2}\mc_3A_2\mc_2\sigma\varepsilon^{l}\\
&\leq \mc_4 \sigma \varepsilon^{l}+\frac{C_{12}\mc_1\sigma\me (A_2\mc_2 +A_3\mc_4) \sigma\varepsilon^{l}}{F_1(\h r)}+\frac{ A_3(1-K) }{100\me}\mc_4 \sigma \varepsilon^{l}\\
&\quad+
\sup_{\bm\Psi\in \mathfrak{V}}|\n\mu_{1}|(1+\mathcal{K})^{2}\mc_3\mc_2\sigma\varepsilon^{l}.\\
\end{aligned}
\end{equation}
Here $C_{12}>0$  is a constant depending only on $ (\gamma,r_0,r_1,\rho_0,U_{1,0},U_{2,0},S_0)$.
We choose \begin{equation}\label{ss}\begin{cases}\begin{aligned}
&\mc_4>\frac{100\me}{1-K}\max_{i=1,2,3,4}\sup_{\bm\Psi\in \mathfrak{V}}|\n\mu_i|(1+\mathcal{K})^{2}\mc_3A_2\mc_2+\mc_2,\\
&A_3=A_2+\frac{(1-K)}{25\me}=1+\frac{2(1-K)}{25\me},\\ &\sigma_{5}=\min\bigg\{\sigma_4,\frac{1-K}{100  C_{12} \mc_1(A_2+A_3) \me^2}\bigg\},\\ &\varepsilon = 1 - \frac{1-K}{200\me},\end{aligned} \end{cases}\end{equation} where $A_2$  is defined by \eqref{de} and $ \sigma_4 $ is given in Proposition \ref{p4}. Then for any $\sigma\in(0,\sigma_2)$, one gets
\begin{equation}
\begin{cases}
\begin{aligned}
&\bigg|\Upsilon_{1}(\h t,\h r)-\Upsilon_{1}^{(T)}(\h t,\h r)\bigg|\leq \bigg(1+\frac{1-K}{50\me}+A_3\frac{ (1-K) }{100\me}\bigg)\mc_4 \sigma \varepsilon^{l}\\
&=\bigg(A_3-A_3\frac{ (1-K) }{200\me}-\frac{3(1-K)}{50\me}+A_3\frac{ 3(1-K) }{200\me}\bigg)\mc_4 \sigma \varepsilon^{l}=\chi_{11}A_3\mc_4 \sigma \varepsilon^{l+1}, \\
&\bigg|\Upsilon_{4}(\h t,\h r)-\Upsilon_{4}^{(T)}(\h t,\h r)\bigg|\leq \bigg(A_3+\frac{1-K}{50\me}-A_3\frac{ 99(1-K) }{100\me}\bigg)\mc_4 \sigma \varepsilon^{l}\\
&=\bigg(1-\frac{ 1-K }{200\me}+\frac{21 (1-K) }{200\me}-A_3\frac{ 99(1-K) }{100\me}\bigg)\mc_4 \sigma \varepsilon^{l}=\chi_{12}\mc_4 \sigma \varepsilon^{l+1}, \\
\end{aligned}\end{cases}\end{equation} where  \begin{equation*}\begin{aligned}
\chi_{11}=1-\frac{1-K}{50\me  \varepsilon}+A_3\frac{ (1-K) }{200\me  \varepsilon}<1, \ \ \chi_{12}=1 +\frac{21 (1-K) }{200\me \varepsilon}-A_3\frac{ 99(1-K) }{100\me  \varepsilon}<1. \end{aligned}\end{equation*}
\par In the following, at the boundary $ r=r_0$, one has
\begin{equation*}
\begin{cases}
\begin{aligned}
&\bigg|\Upsilon_{2}(t,r_0)-\Upsilon_{2}^{(T)}(t,r_0)\bigg|=|K_2|\bigg|\Upsilon_{1}(t,r_0)
-\Upsilon_{1}^{(T)}(t,r_0)\bigg| \leq |A_3|K\mc_4 \sigma \varepsilon^{l},\\
&\bigg|\Upsilon_{3}(t,r_0)-\Upsilon_{3}^{(T)}(t,r_0)\bigg|=|K_3|\bigg|\Upsilon_{1}(t,r_0)
-\Upsilon_{1}^{(T)}(t,r_0)\bigg| \leq |A_3|K\mc_4 \sigma \varepsilon^{l}.
\end{aligned}
\end{cases}
\end{equation*} Then by taking  the temporal derivatives of
\eqref{5-6-3}-\eqref{5-6-4} and integrating along the characteristic curves $t_{2}(s;\hat{t},\hat{r})$ and $t_{3}(s;\hat{t},\hat{r})$ defined in \eqref{1-a}, respectively,  there hold \begin{equation}
\begin{aligned}
&\bigg|\Upsilon_{2}(\h t,\h r)-\Upsilon_{2}^{(T)}(\h t,\h r)\bigg|\\
&\leq A_3K\mc_4\sigma \varepsilon^{l} +C_{12}\mc_1\sigma(A_2\mc_2 +A_3\mc_4) \sigma\varepsilon^{l} +\sup_{\bm\Psi\in \mathfrak{V}}|\n\mu_{2}|(1+\mathcal{K})^{2}\mc_3A_2\mc_2\sigma\varepsilon^{l}\\
&\leq\bigg(A_3K+A_3\frac{1-K}{50\me}\bigg)\mc_4\sigma \varepsilon^{l}=\chi_{13}A_3\mc_4\sigma \varepsilon^{l+1}, \\
\end{aligned}
\end{equation} and  \begin{equation}
\begin{aligned}
&\bigg|\Upsilon_{3}(\h t,\h r)-\Upsilon_{3}^{(T)}(\h t,\h r)\bigg|\\
&\leq A_3K\mc_4\sigma \varepsilon^{l} +C_{12}\mc_1\sigma(A_2\mc_2 +A_3\mc_4) \sigma\varepsilon^{l} +\sup_{\bm\Psi\in \mathfrak{V}}|\n\mu_{3}|(1+\mathcal{K})^{2}\mc_3A_2\mc_2\sigma\varepsilon^{l}\\
&\leq\bigg(A_3K+A_3\frac{1-K}{50\me}\bigg)\mc_4\sigma \varepsilon^{l}=\chi_{13}A_3\mc_4\sigma \varepsilon^{l+1}, \\
\end{aligned}
\end{equation}
where
\begin{equation*}
\begin{aligned} &\chi_{13}=\frac{K+\frac{1-K}{50\me}}{\varepsilon}
=\frac{200\me K+4(1-K)}{200\me K+4(1-K)+200\me(1-K)-5(1-K)}<1.
\end{aligned}
\end{equation*}
Thus, by the arbitrariness of $\hat{t}\in[(l+1)T_{0},\tau]$, the estimates  in \eqref{6-6} holds.
\par Next, it follows from \eqref{3-1}-\eqref{3-4} that
\begin{equation}\label{7-1}\begin{aligned}
&W_{1}-W_{1}^{(T)}=\frac1r\bigg(-(\ma_{11}+\ma_{12})(r)
(\Psi_1-\Psi_1^{(T)})+(\ma_{13}-\ma_{11}-\ma_{12})(r)(\Psi_4-\Psi_4^{(T)})
\\
&\ \ +\ma_{14}(r)(\Psi_2-\Psi_2^{(T)})+\ma_{15}(r)(\Psi_3-\Psi_3^{(T)})\bigg)
+\bigg(\mu_1(\bm\Psi+\bar{\bf R})-\mu_1(\bar{\bf R})\bigg)\bigg(-\mm_{11}(r)(\Psi_1\\
&\ \ -\Psi_1^{(T)})+\mm_{12}(r)(\Psi_2-\Psi_2^{(T)})+\mm_{13}(r)
(\Psi_3-\Psi_3^{(T)})+\mm_{14}(r)(\Psi_4-\Psi_4^{(T)})\bigg)\\
&\ \ +\mu_1(\bm\Psi+\bar{\bf R})\bigg(\frac{\Psi_2^2}{r}+\frac{\gamma-1}{8r}(\Psi_4^2 - \Psi_1^2)+\frac{\bar c}{4\gamma r }\Psi_3(\Psi_1+ \Psi_4)\bigg)-\bigg(\frac{(\Psi_2^{(T)})^2}{r}\\
&\ \ +\frac{\gamma-1}{8r}((\Psi_4^{(T)})^2 - (\Psi_1^{(T)})^2)+\frac{\bar c}{4\gamma r }\Psi_3^{(T)}(\Psi_1^{(T)}+ \Psi_4^{(T)})\bigg)\bigg)-\bigg(\frac{{\Psi}_4-{\Psi}_4^{(T)} - ({\Psi}_1-{\Psi}_1^{(T)}) }{4\gamma} \\
&\ \ + \frac{\bar{c}(\Psi_3-{\Psi}_3^{(T)})}{2\gamma}\bigg)\bigg(  W_3+\mu_1(\bm\Psi+\bar{\bf R})\Upsilon_3\bigg)-\mu_{1}(\bm\Psi+\bar{\bf R})\bigg(\Upsilon_1-\Upsilon_1^{(T)}\bigg)+\bigg(\mu_{1}(\bm\Psi+\bar{\bf R})\\
&\ \ -\mu_{1}(\bm\Psi^{(T)}+\bar{\bf R})\bigg)\bigg(-\Upsilon_{1}^{(T)}-\mm_{11}(r)\Psi_1^{(T)}+\mm_{12}(r)\Psi_2^{(T)}+\mm_{13}(r)
\Psi_3^{(T)}+\mm_{14}(r)\Psi_4^{(T)}\\
&\ \ +\bigg(\frac{(\Psi_2^{(T)})^2}{r}+\frac{\gamma-1}{8r}((\Psi_4^{(T)})^2 - (\Psi_1^{(T)})^2)+\frac{\bar c}{4\gamma r }\Psi_3^{(T)}(\Psi_1^{(T)}+ \Psi_4^{(T)})\bigg)\bigg)\\
&\ \ -\bigg(\frac{{\Psi}_4 - {\Psi}_1) }{4\gamma} + \frac{\bar{c}\Psi_3}{2\gamma}\bigg)\Upsilon_3^{(T)}\bigg)
-\bigg(\frac{{\Psi}_4^{(T)} - {\Psi}_1^{(T)}) }{4\gamma} + \frac{\bar{c}{\Psi}_3^{(T)}}{2\gamma}\bigg)\bigg(  (W_3-W_3^{(T)})\\
&\ \  +\mu_1(\bm\Psi^{(T)}+\bar{\bf R})(\Upsilon_3-\Upsilon_3^{(T)})\bigg),
\end{aligned}
\end{equation}
\begin{equation*}\begin{aligned}
&W_{4}-W_{4}^{(T)}=\frac1r\bigg(-\ma_{41}(r)
(\Psi_4-\Psi_4^{(T)})+(\ma_{42}-\ma_{41})(r)(\Psi_1-\Psi_1^{(T)})
\\
&\ \ +\ma_{43}(r)(\Psi_2-\Psi_2^{(T)})-\ma_{44}(r)(\Psi_3-\Psi_3^{(T)})\bigg)
+\bigg(\mu_4(\bm\Psi+\bar{\bf R})-\mu_4(\bar{\bf R})\bigg)\bigg(\mm_{41}(r)(\Psi_1\\
&\ \ -\Psi_1^{(T)})+\mm_{12}(r)(\Psi_2-\Psi_2^{(T)})-\mm_{13}(r)
(\Psi_3-\Psi_3^{(T)})-\mm_{44}(r)(\Psi_4-\Psi_4^{(T)})\bigg)\\
&\ \ +\mu_4(\bm\Psi+\bar{\bf R})\bigg(\frac{\Psi_2^2}{r}-\frac{\gamma-1}{8r}(\Psi_4^2 - \Psi_1^2)-\frac{\bar c}{4\gamma r }\Psi_3(\Psi_1+ \Psi_4)\bigg)-\bigg(\frac{(\Psi_2^{(T)})^2}{r}\\
&\ \ -\frac{\gamma-1}{8r}((\Psi_4^{(T)})^2 - (\Psi_1^{(T)})^2)-\frac{\bar c}{4\gamma r }\Psi_3^{(T)}(\Psi_1^{(T)}+ \Psi_4^{(T)})\bigg)\bigg)+\bigg(\frac{{\Psi}_4-{\Psi}_4^{(T)} - ({\Psi}_1-{\Psi}_1^{(T)}) }{4\gamma} \\ \end{aligned}
\end{equation*}\begin{equation}\label{7-2}\begin{aligned}
&\ \ + \frac{\bar{c}(\Psi_3-{\Psi}_3^{(T)})}{2\gamma}\bigg)\bigg(  W_3+\mu_4(\bm\Psi+\bar{\bf R})\Upsilon_3\bigg)-\mu_{4}(\bm\Psi+\bar{\bf R})\bigg(\Upsilon_4-\Upsilon_4^{(T)}\bigg)+\bigg(\mu_{4}(\bm\Psi+\bar{\bf R})\\
&\ \ -\mu_{4}(\bm\Psi^{(T)}+\bar{\bf R})\bigg)\bigg(-\Upsilon_{4}^{(T)}+\mm_{41}(r)\Psi_1^{(T)}+\mm_{42}(r)\Psi_2^{(T)}-\mm_{43}(r)
\Psi_3^{(T)}-\mm_{44}(r)\Psi_4^{(T)}\\
&\ \ +\bigg(\frac{(\Psi_2^{(T)})^2}{r}-\frac{\gamma-1}{8r}((\Psi_4^{(T)})^2 - (\Psi_1^{(T)})^2)-\frac{\bar c}{4\gamma r }\Psi_3^{(T)}(\Psi_1^{(T)}+ \Psi_4^{(T)})\bigg)\bigg)\\
&\ \ +\bigg(\frac{{\Psi}_4 - {\Psi}_1) }{4\gamma} + \frac{\bar{c}\Psi_3}{2\gamma}\bigg)\Upsilon_3^{(T)}\bigg)
+\bigg(\frac{{\Psi}_4^{(T)} - {\Psi}_1^{(T)}) }{4\gamma} + \frac{\bar{c}{\Psi}_3^{(T)}}{2\gamma}\bigg)\bigg(  (W_3-W_3^{(T)})\\
&\ \ +\mu_4(\bm\Psi^{(T)}+\bar{\bf R})(\Upsilon_3-\Upsilon_3^{(T)})\bigg),\\
\end{aligned}
\end{equation}
\begin{equation}\label{7-3}\begin{aligned}
&W_{2}-W_{2}^{(T)}=-\frac{\Psi_{2}-\Psi_{2}^{(T)}}r-\mu_{2}(\bm\Psi+\bar{\bf R})\bigg(\Upsilon_2-\Upsilon_{2}^{(T)}\bigg)-\bigg(\mu_{2}(\bm\Psi+\bar{\bf R})-\mu_{2}(\bm\Psi^{(T)}+\bar{\bf R})\bigg)\Upsilon_{2}^{(T)},\end{aligned}
\end{equation}
and \begin{equation}\label{7-4}
W_{3}-W_{3}^{(T)}=-\mu_{3}(\bm\Psi+\bar{\bf R})\bigg(\Upsilon_3-\Upsilon_{3}^{(T)}\bigg)
-\bigg(\mu_{3}(\bm\Psi+\bar{\bf R})-\mu_{3}(\bm\Psi^{(T)}+\bar{\bf R})\bigg)\Upsilon_{3}^{(T)}.
\end{equation}
Then  the straightforward computations together with the above estimates obtain  that
\begin{equation}
\begin{cases}
\begin{aligned}
&\|W_{2}(t,\cdot)-W_{2}^{(T)}(t,\cdot)\|_{C^{0}([r_0,r_1])}\\
&\leq \chi_{13}A_3\mc_4 \mk\sigma \varepsilon^{l+1}+(1+\mk)C_{13}(A_2\mc_2\mc_4 +\mc_1\sigma A_2\mc_2\mc_4 )\sigma \varepsilon^{l+1},\\
&\|W_{3}(t,\cdot)-W_{3}^{(T)}(t,\cdot)\|_{C^{0}([r_0,r_1])}\\
&\leq \chi_{13}A_3\mc_4 \mk\sigma \varepsilon^{l+1}+(1+\mk)C_{13}(A_2\mc_2\mc_4 +\mc_1\sigma A_2\mc_2\mc_4 )\sigma \varepsilon^{l+1}.\\
\end{aligned}
\end{cases}
\end{equation} Here $C_{13}>0$  is a constant depending only on $ (\gamma,r_0,r_1,\rho_0,U_{1,0},U_{2,0},S_0)$.
Set
$$ A_4>\chi_{13}A_3 +\frac{(1+\mk)C_{13}(A_2\mc_2 +\mc_1\sigma_5 A_2\mc_2 )}{\mk}+1.$$ Therefore,  the proof of \eqref{6-5-1-r} is completed. This, together with \eqref{7-1} and  \eqref{7-2}, yields that
\begin{equation}\label{7-1}
\begin{cases}
\begin{aligned}
&\|W_{1}(t,\cdot)-W_{1}^{(T)}(t,\cdot)\|_{C^{0}([r_0,r_1])}\\
&\leq \chi_{13}A_3\mc_4 \mk\sigma \varepsilon^{l+1}+(1+\mk)C_{13}(A_2\mc_2\mc_4 +\mc_1\sigma A_2\mc_2\mc_4+\mc_1\sigma A_5\mc_4 )\sigma \varepsilon^{l+1},\\
&\|W_{4}(t,\cdot)-W_{4}^{(T)}(t,\cdot)\|_{C^{0}([r_0,r_1])}\\
&\leq \chi_{13}A_3\mc_4 \mk\sigma \varepsilon^{l+1}+(1+\mk)C_{13}(A_2\mc_2\mc_4 +\mc_1\sigma A_2\mc_2\mc_4+\mc_1\sigma A_5\mc_4 )\sigma \varepsilon^{l+1}.\\
\end{aligned}
\end{cases}
\end{equation}
Set
$$ A_5>\chi_{13}A_3 +\frac{(1+\mk)C_{13}(A_2\mc_2 +\mc_1\sigma_5 A_2\mc_2 \mc_1\sigma_5 A_5)}{\mk}+1.$$ The estimate   \eqref{6-5-1-r-1} is obtained. Hence we prove Theorem  \ref{th4} completely.
\par {\bf Acknowledgement.} The research of   Huimin Yu is  partially supported by National Natural Science Foundation of China (No. 12271310) and Natural Science Foundation of Shandong Province (ZR2022MA088).  The research of Zihao Zhang is supported by  the China Scholarship Council (Grant No. 202606170019).
\par {\bf Data availability.} No data was used for the research described in the article.
    \par {\bf Conflict of interest.} This work does not have any conflicts of interest.
 
 \end{document}